\newcommand{\C}{\mathbb{C}}
\newcommand{\R}{\mathbb{R}}
\newcommand{\cC}{\mathcal{C}}
\newcommand{\cF}{\mathcal{F}}
\newcommand{\cH}{\mathcal{H}}
\newcommand{\cJ}{\mathcal{J}}
\newcommand{\cL}{\mathcal{L}}
\newcommand{\cS}{\mathcal{S}}
\newcommand{\cV}{\mathcal{V}}
\renewcommand{\a}{\alpha}
\renewcommand{\b}{\beta}
\renewcommand{\d}{\delta}
\newcommand{\e}{\varepsilon}
\newcommand{\s}{\sigma}
\renewcommand{\r}{\rho}
\newcommand{\sphi}{\underline{\phi}}
\newcommand{\tlambda}{\tilde{\lambda}}
\newcommand{\bnabla}{\overline{\nabla}}
\newcommand{\hpi}{\frac{\pi}{2}}
\newcommand{\p}{\partial}
\newcommand{\dd}{\sqrt{-1}\partial \bar{\partial}}
\newcommand{\ddt}{\frac{d}{dt}}
\newcommand{\cf}{{\rm cf.\ }} 
\newcommand{\eg}{{\rm e.g.\ }} 
\newcommand{\ie}{{\rm i.e.\ }} 
\renewcommand{\Re}{\mathrm{Re}}
\renewcommand{\Im}{\mathrm{Im}}
\DeclareMathOperator{\Id}{Id}
\DeclareMathOperator{\osc}{osc}
\DeclareMathOperator{\Rm}{Rm}
\DeclareMathOperator{\SLag}{SLag}
\DeclareMathOperator{\Tr}{Tr}
\renewcommand{\leq}{\leqslant}
\renewcommand{\geq}{\geqslant}
\renewcommand{\hat}{\widehat}
\renewcommand{\tilde}{\widetilde}
\numberwithin{equation}{section}       
\newtheorem{prop} {Proposition} [section]
\newtheorem{thm}[prop] {Theorem} 
\newtheorem{dfn}[prop] {Definition}
\newtheorem{lem}[prop] {Lemma}
\newtheorem{cor}[prop]{Corollary}
\theoremstyle{remark}
\newtheorem*{ackn}{\bf{Acknowledgment}} 
\newtheorem{rk}[prop]{Remark}
\title[Tan-concavity and the tangent Lagrangian phase flow]{Tan-concavity property for Lagrangian phase operators and applications to \\the tangent Lagrangian phase flow} 
\date{\today}
\author[R. Takahashi]{Ryosuke Takahashi}
\address{Faculty of Mathematics\\
 Kyushu University\\
744\\
Motooka\\
Nishi-ku\\
Fukuoka\\
819-0395\\
 JAPAN}
\email{rtakahashi@math.kyushu-u.ac.jp}
\subjclass[2010]{Primary 53C55; Secondary 53C44}
\keywords{deformed Hermitian Yang-Mills metric, parabolic PDE, Thomas-Yau conjecture}
\begin{document}
\maketitle
\begin{abstract}
We explore the tan-concavity of the Lagrangian phase operator for the study of the deformed Hermitian Yang-Mills (dHYM) metrics. This new property compensates for the lack of concavity of the Lagrangian phase operator as long as the metric is almost calibrated. As an application, we introduce the tangent Lagrangian phase flow (TLPF) on the space of almost calibrated $(1,1)$-forms that fits into the GIT framework for dHYM metrics recently discovered by Collins-Yau. The TLPF has some special properties that are not seen for the line bundle mean curvature flow (\ie the mirror of the Lagrangian mean curvature flow for graphs). We show that the TLPF starting from any initial data exists for all positive time. Moreover, we show that the TLPF converges smoothly to a dHYM metric assuming the existence of a $C$-subsolution, which gives a new proof for the existence of dHYM metrics in the highest branch.
\end{abstract}
\tableofcontents
\section{Introduction}
Let $X$ be a compact $n$-dimensional complex manifold with a fixed K\"ahler form $\a$ and a closed real $(1,1)$-form $\hat{\chi}$. For $\phi \in C^\infty(X;\R)$, we set $\chi:=\hat{\chi}+\dd \phi$ (where the forms $\hat{\chi}$, $\chi$ are not necessary K\"ahler).
We say that $\phi \in C^\infty(X,\R)$ is a deformed Hermitian-Yang Mills (dHYM) metric if it satisfies
\begin{equation} \label{dHYM}
\Im \big(e^{-\sqrt{-1} \hat{\Theta}}(\a+\sqrt{-1} \chi)^n \big)=0.
\end{equation}
We define a topological invariant
\[
Z:=\int_X (\a+\sqrt{-1} \chi)^n
\]
and assume $Z \neq 0$. Then the constant angle $\hat{\Theta}$ is uniquely determined (mod. $2\pi$) by the property that
\[
Z e^{-\sqrt{-1} \hat{\Theta}} \in \R_{>0}.
\]
The equation \eqref{dHYM} first appeared in the physics literature \cite{MMMS00}, and \cite{LYZ01}  from mathematical side as the mirror object to a special Lagrangian in the setting of semi-flat mirror symmetry. More specifically, let $X, W$ be a mirror Calabi-Yau pair defined as dual torus fibrations over a base tori $B$. Then in this setting, a Lagrangian section $s$ of the torus fibration $W \to B$ corresponds to a fiber metric $\phi$ on a holomorphic line bundle $L \to X$. If $s$ is a special Lagrangian section, then it further corresponds to a fiber metric $\phi$ satisfying the dHYM equation \eqref{dHYM} via the Fourier-Mukai transform. Recently, the dHYM metric has been studied actively (\eg \cite{CCL20,Che19,CJY15,CY18,HJ20,HY19,JY17,Pin19,SS19}). We define
\[
\theta(\lambda):=\sum_{i=1}^n \arctan \lambda_i,
\]
where $\arctan$ takes values in $(-\hpi,\hpi)$ so the image of $\theta$ lies in $(-n \hpi,n \hpi)$. For an $n \times n$ Hermitian matrix $A$ with eigenvalues $\lambda[A]$, we set
\[
\Theta(A):=\theta(\lambda[A]).
\]
The function $\Theta$ is smooth by the symmetry of $\theta$. For $\phi \in C^\infty(X;\R)$, we set $A[\phi]:=\chi_{i\bar{j}}\a^{k\bar{j}}$. This is a Hermitian endomorphism on $T^{1,0}X$ with respect to $\a$. We denote the eigenvalues of $A[\phi]$ by $\lambda[\phi]$. Then according to the argument \cite{JY17}, the condition \eqref{dHYM} is equivalent to
\begin{equation} \label{dHYMf}
\Theta(A[\phi])=\hat{\Theta} \quad (\text{mod. $2\pi$}),
\end{equation}
where we regard $A[\phi]$ as an $n \times n$ Hermitian matrix at each point by taking normal coordinates. We say that $\phi \in C^\infty(X;\R)$ is {\it supercritical} (resp. {\it hypercritical}) if it satisfies $\Theta(A[\phi])>(n-2)\hpi$ (resp. $>(n-1)\hpi$). In particular, since $\arctan(\cdot)$ takes values in $(-\hpi,\hpi)$, the condition $\Theta(A[\phi])>(n-1)\hpi$ yields that all the components of $\lambda[\phi]$ are positive, and hence $\chi_\phi$ is K\"ahler. Although the equation \eqref{dHYMf} is elliptic, it has several problems on analysis. Most seriously, the operator $\Theta$ fails to be concave in general. One can easily check by a straightforward computation that $\Theta(A[\phi])$ is concave if and only if $\phi$ is hypercritical. The concavity of the operator is essential to apply the Evans-Krylov theory \cite{Kry82,Wan12} for $C^{2,\b}$ estimate. To deal with this problem, an important observation shown by Yuan \cite{Yua05} is that the level set $\{\lambda \in \R^n|\theta(\lambda)=\s\}$ is still convex as long as $\s \geq (n-2)\hpi$. This result indicates that one may get a concave function $\varsigma \circ \theta$ by composing $\theta$ with a sufficiently concave function $\varsigma \colon \R \to \R$. Actually Collins-Picard-Wu \cite{CPW17} showed that the function $-e^{-A\theta(\lambda)}$ is concave for a large enough constant $A=A(\d)$ as long as $\lambda$ satisfies $\theta(\lambda) \geq (n-2)\hpi+\d$ for some $\d>0$.

In this paper, we consider yet another choice of $\varsigma$. Let us consider the function
\[
f(\lambda):=\tan(\theta(\lambda)-\hat{\Theta}).
\]
The function $f$ is not globally defined, but it is well-defined restricted to a subset of $\R^n$;
\[
\cS:=\bigg\{\lambda \in \R^n \bigg| |\theta(\lambda) -\hat{\Theta}|<\hpi \bigg\}.
\]
Compared to the result \cite{CPW17}, one might find it strange that our choice of $\varsigma(x)=\tan(x-\hat{\Theta})$ is not concave, so it is not sure that the composition $f=\varsigma \circ \theta$ is also a concave function. In our first main theorem, we check the concavity property for $f$;
\begin{thm} \label{tanconv}
Assume that $\hat{\Theta} \in ((n-1)\hpi,n \hpi)$, Then the function $\theta-\hat{\Theta}$ is tan-concave, \ie the composition $f=\tan \circ (\theta-\hat{\Theta})$ is concave on $\cS$.
\end{thm}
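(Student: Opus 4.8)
The plan is to verify that the Hessian $D^2 f$ is negative semidefinite everywhere on $\cS$. Note first that $\cS = \{\theta > \hat\Theta - \hpi\}$: the other half $\{\theta < \hat\Theta + \hpi\}$ of the defining condition is vacuous since $\hat\Theta + \hpi > n\hpi > \sup\theta$. In particular $\cS$ lies in the supercritical range $\theta > (n-2)\hpi$ (because $\hat\Theta - \hpi > (n-2)\hpi$), hence is convex by Yuan's convexity of the level sets of $\theta$, and at every point of $\cS$ at most one eigenvalue $\lambda_i$ is negative. Writing $\theta_i := \partial_i\theta = (1+\lambda_i^2)^{-1} > 0$ and $\sigma := \theta - \hat\Theta$, a direct computation from $f = \tan\sigma$ gives $\partial_i f = (1+f^2)\theta_i$ and
\[
\partial_i\partial_j f = 2(1+f^2)\big(f\,\theta_i\theta_j - \delta_{ij}\lambda_i\theta_i^2\big).
\]
Testing against $\xi \in \R^n$ and substituting $\eta_i := \theta_i\xi_i$ (a linear automorphism of $\R^n$), the inequality $D^2 f \leq 0$ becomes positive semidefiniteness of $M := \mathrm{diag}(\lambda_1,\dots,\lambda_n) - f\,\mathbf{1}\mathbf{1}^{\top}$, i.e. $\sum_i\lambda_i\eta_i^2 \geq f\,(\sum_i\eta_i)^2$ for all $\eta$.

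If $\sigma \geq 0$ then $\theta \geq \hat\Theta > (n-1)\hpi$ forces every $\lambda_i > 0$, so $\mathrm{diag}(\lambda) \succ 0$. When $f = \tan\sigma \leq 0$ the relation $M \succeq 0$ is immediate; when $f > 0$ it is equivalent, by Cauchy--Schwarz (the Schur complement of $\mathrm{diag}(\lambda)$), to $f\sum_i\lambda_i^{-1} \leq 1$, i.e. to $\sum_i\cot\mu_i \leq \cot\sigma$ with $\mu_i := \arctan\lambda_i$. Passing to $\psi_i := \hpi - \mu_i \in (0,\hpi)$ and $c := n\hpi - \hat\Theta \in (0,\hpi)$, one has $\sum_i\psi_i = c - \sigma \in (0,\hpi)$, and the desired inequality $\sum_i\tan\psi_i \leq \tan(\hpi - \sigma)$ follows from the superadditivity of $\tan$ on $(0,\hpi)$ (giving $\sum_i\tan\psi_i \leq \tan(\sum_i\psi_i) = \tan(c-\sigma)$) together with its monotonicity (giving $\tan(c-\sigma) \leq \tan(\hpi-\sigma)$, as $c \leq \hpi$). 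If $\sigma < 0$ and all $\lambda_i \geq 0$, then $M = \mathrm{diag}(\lambda) + |f|\,\mathbf{1}\mathbf{1}^{\top} \succeq 0$ trivially.

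The remaining case is $\sigma < 0$ with exactly one negative eigenvalue, say $\lambda_n < 0 < \lambda_1,\dots,\lambda_{n-1}$ (so $f = \tan\sigma < 0$); this is the crux. Taking the Schur complement of the positive definite block $\mathrm{diag}(\lambda_1,\dots,\lambda_{n-1}) + |f|\,\mathbf{1}\mathbf{1}^{\top}$ and simplifying with the Sherman--Morrison formula shows that $M \succeq 0$ is equivalent to the scalar inequality
\[
|\lambda_n|\bigg(\frac{1}{|\tan\sigma|} + \sum_{i=1}^{n-1}\frac{1}{\lambda_i}\bigg) \leq 1 .
\]
Set $\nu := -\arctan\lambda_n$, $\psi_i := \hpi - \arctan\lambda_i$ for $i < n$, $b := \hat\Theta - (n-1)\hpi \in (0,\hpi)$, $S := \sum_{i<n}\psi_i$ and $\gamma := -\sigma = b + S + \nu$; the membership $\lambda \in \cS$ is precisely $\gamma \in (0,\hpi)$, which forces $S < \gamma < \hpi$ and $\nu + S < \gamma < \hpi$. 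By superadditivity $\sum_{i<n}\lambda_i^{-1} = \sum_{i<n}\tan\psi_i \leq \tan S$, so it suffices to prove $\tan\nu\,(\cot\gamma + \tan S) \leq 1$. Using $\cot\gamma + \tan S = \cos(\gamma-S)/(\sin\gamma\cos S)$ with $\gamma - S = b+\nu$, this reduces to $\sin\nu\cos(b+\nu) \leq \cos\nu\cos S\sin\gamma$; expanding $\cos(b+\nu)=\cos(\gamma - S)$ and collecting terms gives the equivalent form $\sin\nu\cos\gamma\cos S \leq \sin\gamma\cos(\nu+S)$, i.e. $(\sin\nu\cos S)/\cos(\nu+S) \leq \tan\gamma$; and $(\sin\nu\cos S)/\cos(\nu+S) \leq \sin(\nu+S)/\cos(\nu+S) = \tan(\nu+S) \leq \tan\gamma$, using $\sin\nu\cos S \leq \sin(\nu+S)$ and $0 < \nu+S < \gamma < \hpi$. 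I expect this last case — massaging the Schur complement condition into a shape where superadditivity and monotonicity of $\tan$ finish the estimate — to be the only real obstacle; the Hessian computation and the first two cases are routine.
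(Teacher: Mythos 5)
Your proof is correct and follows essentially the same route as the paper's: both reduce the concavity of $f$ to positive semidefiniteness of a diagonal-plus-rank-one matrix and then to a single scalar inequality that is settled by the superadditivity of $\tan$ (the iterated addition formula). The only differences are cosmetic --- you verify positive semidefiniteness via Schur complements and Sherman--Morrison rather than the paper's explicit principal-minor determinant formula, and you work with $-f$ itself where the paper first replaces it by the smaller quantity $T=\tan((n-1)\hpi-\theta)$.
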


After the author had posted the preprint on arXiv, he was informed by F.~R.~Harvey and H.~B.~Lawson, Jr. that Theorem \ref{tanconv} has a relation to their recent work \cite{HL20}, in which they proved a ``tameness'' condition for the composition $\tan(\SLag/n)$ to solve the inhomogeneous Dirichlet problem $\SLag(D^2u):=\Tr(\arctan(D^2u))=\psi$ for a function $u \colon \overline{\Omega} \to \R$, where $\Omega \subset \R^n$ is a bounded domain with smooth strictly convex boundary $\p \Omega$, and $\psi$ is a continuous function on $\overline{\Omega}$ satisfying $\psi(\overline{\Omega}) \subset ((n-2)\hpi,n\hpi)$ (see \cite[Section 5]{HL20} for more details).

We expect that the tan-concavity property is useful for the study of dHYM metrics or minimal Lagrangian graphs. As a demonstration, we provide a new geometric flow approach to construct dHYM metrics. Associated to the set $\cS \subset \R^n$, we define the space of {\it almost calibrated} potential functions (\cf \cite{CCL20});
\[
\cH:=\bigg\{ \phi \in C^\infty(X;\R) \bigg| |\Theta(A[\phi])-\hat{\Theta}|< \hpi \bigg\}.
\]
We remark that the set $\cH$ is strictly contained in the space of potentials with supercritical phase when $\hat{\Theta}>(n-1)\hpi$. In later arguments, we always assume that $\cH$ is not empty and $0 \in \cH$ by replacing a reference form $\hat{\chi}$. For any $\phi_0 \in \cH$, we define
\begin{equation} \label{TLPF}
\ddt \phi_t=\tan(\Theta(A[\phi_t])-\hat{\Theta}).
\end{equation}
We have $f_i=\frac{1+f^2}{1+\lambda_i^2}>0$ for all $i$, which guarantees the ellipticity of the operator in the RHS. So the short time existence follows from general theory. We would like to call \eqref{TLPF}, the {\it tangent Lagrangian phase flow} (TLPF). For simplicity, we set $F(A):=f(\lambda[A])$ so that the flow equation is given by $\ddt \phi=F(A[\phi])$. On the other hand, Jacob-Yau \cite{JY17} introduced the {\it line bundle mean curvature flow} (LBMCF)
\begin{equation} \label{LBMCF}
\ddt \phi_t=\Theta(A[\phi_t])-\hat{\Theta}
\end{equation}
as the mirror of the Lagrangian mean curvature flow (LMCF) for graphs. In a formal level, the TLPF is similar to the LBMCF whenever the Lagrangian phase $\Theta(A[\phi_t])$ is very close to $\hat{\Theta}$. However it is expected that the limiting behavior of these two flows are quite different. A similar observation can be found in the comparison of the K\"ahler-Ricci flow and the inverse Monge-Amp\`ere flow on Fano manifolds \cite{CHT17}. For the moment, we assume $\hat{\Theta} \in ((n-1)\hpi,n \hpi)$. Then the virtue of the TLPF is the following;
\begin{itemize}
\item The TLPF has some special properties that are not seen for the LBMCF (\cf Remark \ref{cff} and Remark \ref{efc}).
\item Our choice of $\varsigma(x)=\tan(x-\hat{\Theta})$ has a natural geometric meaning; the TLPF perfectly fits into the GIT framework recently discovered by Collins-Yau \cite{CY18} (as the mirror of Solomon's \cite{Sol13} and Thomas's \cite{Tho01}) in the sense that it defines the gradient flow of the Kempf-Ness functional $\cJ$, which is globally convex on $\cH$. This GIT framework gives supporting evidence for the equation \eqref{TLPF} working well (\cf Remark \ref{GITint}).
\item From a PDE point of view, the TLPF is more in line with the parabolic equation proposed by Krylov \cite{Kry76} by Theorem \ref{tanconv}.
\end{itemize}

Before giving the second main theorem, we recall briefly some existence results of dHYM metrics. In the analysis of the equation \eqref{dHYMf}, it is crucial to give a proper notion of $C$-subsolutions whose existence implies a priori estimates to all orders. In this direction, Collins-Jacob-Yau \cite{CJY15} showed the following result by using the method of continuity;
\begin{thm}[\cite{CJY15}] \label{supere}
Let $X$ be a compact complex manifold with a K\"ahler form $\a$, and $\hat{\chi}$ a closed real $(1,1)$-form. Assume that $\hat{\Theta} \in ((n-2)\hpi, n \hpi)$ and there is a $C$-subsolution $\sphi$ satisfying $\Theta(A[\sphi])>(n-2)\hpi$. Then there exists a deformed Hermitian Yang-Mills metric \eqref{dHYMf}.
\end{thm}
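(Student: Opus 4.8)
Since \eqref{dHYMf} is a second-order elliptic equation — the relevant leading coefficient being the positive quantities $(1+\lambda_i^2)^{-1}$ already used in the introduction — the natural route is the method of continuity, the analytic heart being uniform a priori estimates for which the $C$-subsolution $\sphi$ is the decisive input. I would connect the dHYM equation to a trivially solvable one through the family
\[
\Theta(A[\phi_t])=\psi_t, \qquad \psi_t:=(1-t)\,\Theta(A[\sphi])+t\,\hat\Theta, \qquad t\in[0,1],
\]
so that $\phi_0=\sphi$ solves the $t=0$ equation while $t=1$ recovers \eqref{dHYMf}. Since $\hat\Theta$ and $\Theta(A[\sphi])$ both lie in $((n-2)\hpi,n\hpi)$, so does $\psi_t(x)$ for every $t$ and $x$; and because $\psi_t\le\max\{\Theta(A[\sphi]),\hat\Theta\}$ pointwise, while $\sphi$ is a $C$-subsolution for the endpoint equations — trivially at $t=0$ and by hypothesis at $t=1$ — the fixed function $\sphi$ remains a $C$-subsolution along the entire path. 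Letting $S\subseteq[0,1]$ be the set of $t$ for which the $t$-equation admits a smooth solution, it suffices to prove $S$ is nonempty, open, and closed; then $S=[0,1]$ and $t=1$ gives a dHYM metric.

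Openness is the implicit function theorem in H\"older spaces: the linearization of $\phi\mapsto\Theta(A[\phi])$ at a solution is a linear elliptic operator with no zeroth-order term (leading coefficient $(1+\lambda_i^2)^{-1}$ in normal coordinates diagonalizing $A[\phi_t]$), so its kernel consists of the constants and its cokernel is one-dimensional. Following Sz\'ekelyhidi's treatment of general equations with $C$-subsolutions, one absorbs the cokernel by carrying along an auxiliary real constant $c_t$ together with the normalization $\sup_X\phi_t=0$, solving $\Theta(A[\phi_t])=\psi_t+c_t$. At $t=1$ the solution has constant Lagrangian phase $\hat\Theta+c_1$, so $e^{-\sqrt{-1}(\hat\Theta+c_1)}(\a+\sqrt{-1}\chi_{\phi_1})^n$ is a pointwise-positive multiple of $\a^n$; integrating and comparing with $Ze^{-\sqrt{-1}\hat\Theta}\in\R_{>0}$ forces $c_1\in2\pi\Z$, while the a priori estimates keep $\phi_t$ supercritical along the path, so $c_t=\Theta(A[\phi_t])-\psi_t\in(-\pi,\pi)$. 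Hence $c_1=0$ and $\phi_1$ is a genuine dHYM metric.

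Closedness — the uniform a priori estimates — is the main obstacle, and it is here that the hypotheses $\hat\Theta,\Theta(A[\sphi])>(n-2)\hpi$ are essential. A $C^0$ bound on $\osc_X\phi_t$ follows from the Alexandrov--Bakelman--Pucci argument of Sz\'ekelyhidi, in which the $C$-subsolution supplies the barrier ruling out large oscillation; a $C^1$ bound follows from a maximum-principle computation (or a blow-up reduction to a Liouville-type theorem). The second-order estimate is the crux: applying the maximum principle to a quantity of the form $\log\lambda_{\max}(A[\phi_t])+(\text{gradient term})+(\text{term built from }\sphi)$ in the spirit of Hou--Ma--Wu and Sz\'ekelyhidi, one obtains $|\nabla^2\phi_t|\le C(1+\sup_X|\nabla\phi_t|^2)$; here the non-concavity of $\Theta$ is compensated not by any concavity of the operator but by the convexity of the level sets $\{\theta(\lambda)=\sigma\}$ for $\sigma\ge(n-2)\hpi$ (Yuan, Wang--Yuan), which is exactly the force of the supercritical hypothesis, while $\sphi$ controls the remaining bad terms. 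Finally, with a uniform $C^2$ bound $\lambda[\phi_t]$ ranges in a fixed compact subset of $\{\theta>(n-2)\hpi\}$, on which $\Theta$ is still not concave, so classical Evans--Krylov does not apply directly; instead one runs the version of that argument requiring only convexity of the level sets (equivalently, that post-composing $\Theta$ with a suitable concave function renders it concave — as with Collins--Picard--Wu's $-e^{-A\theta}$, or Theorem \ref{tanconv} in the hypercritical case), producing a uniform $C^{2,\alpha}$ bound, after which Schauder bootstrapping gives uniform $C^{k,\alpha}$ estimates for all $k$. Being uniform in $t$, these estimates make $S$ closed, and $t=1$ then provides a smooth solution of \eqref{dHYMf}.
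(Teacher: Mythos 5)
This theorem is not proved in the paper at all: it is imported verbatim from Collins--Jacob--Yau \cite{CJY15}, the only indication of method being the phrase ``by using the method of continuity,'' and your proposal is a faithful reconstruction of exactly that argument (continuity path carrying an auxiliary constant, $C^0$ bound via the Alexandrov--Bakelman--Pucci argument with the $C$-subsolution as barrier, blow-up $C^1$ estimate, $C^2$ estimate exploiting convexity of the level sets $\{\theta=\sigma\}$ for $\sigma\geq(n-2)\hpi$, and a concave recomposition to run Evans--Krylov). The one point to watch is that Theorem \ref{tanconv} is only available for $\hat{\Theta}>(n-1)\hpi$, so in the merely supercritical range the concave recomposition must be the Collins--Picard--Wu exponential $-e^{-A\theta}$ rather than the tangent, as you correctly hedge.
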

We remark that for any $C$-subsolution $\sphi$, the supercritical phase condition $\Theta(A[\sphi])>(n-2)\hpi$ is automatically satisfied if $\hat{\Theta} \geq ((n-2)\hpi+\frac{2}{n}) \hpi$, and hence is not vacuous. However it is expected that the condition $\Theta(A[\sphi])>(n-2)\hpi$ can be improved when $\hat{\Theta} \in ((n-2)\hpi, ((n-2)\hpi+\frac{2}{n}))$. Pingali \cite{Pin19} showed that this is actually true when $n=3$ using a new continuity path obtained by rewriting \eqref{dHYMf} as a generalized complex Monge-Amp\`ere equation. In \cite{CJY15}, they also claimed that one can show a weaker existence result by using the LBMCF;
\begin{thm}[\cite{CJY15}, Remark 7.4] \label{hyperf}
Let $X$ be a compact complex manifold with a K\"ahler form $\a$, and $\hat{\chi}$ a closed real $(1,1)$-form. Assume that $\hat{\Theta} \in ((n-1)\hpi,n \hpi)$ and there is a $C$-subsolution $\sphi$ satisfying $\Theta(A[\sphi])>(n-1)\hpi$. Then the line bundle mean curvature flow \eqref{LBMCF} with $\phi_0:=\sphi$ exists for all time, and converges to the deformed Hermitian Yang-Mills metric in the $C^\infty$-topology.
\end{thm}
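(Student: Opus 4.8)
The plan is to follow the standard scheme for parabolic Monge--Amp\`ere type flows: short-time existence, preservation of the geometric constraints, a priori estimates uniform in $t$, long-time existence, and finally convergence extracted from the decay of the phase. Short-time existence is immediate, since $\p \Theta/\p \lambda_i=(1+\lambda_i^2)^{-1}>0$ makes \eqref{LBMCF} strictly parabolic, so a unique solution exists on a maximal interval $[0,T)$. The first substantive task is to preserve the phase range. Writing $\Theta_t:=\Theta(A[\phi_t])$, and using that $\dot\phi_t=\Theta_t-\hat{\Theta}$ together with the fact that $\p_t A[\phi_t]$ depends linearly on $\p\bp\dot\phi_t=\p\bp\Theta_t$ (here $\a,\hat{\chi}$ are closed), the chain rule shows $\Theta_t$ solves a linear parabolic equation $\p_t\Theta_t=\cL_t\Theta_t$ with no zeroth-order term, whose leading coefficient $\Theta^{i\bar j}_t:=\p\Theta/\p A_{i\bar j}$ is positive definite. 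The maximum principle then makes $\max_X\Theta_t$ non-increasing and $\min_X\Theta_t$ non-decreasing, so
\[
(n-1)\hpi<\min_X\Theta(A[\sphi])\leq \Theta_t\leq \max_X\Theta(A[\sphi])<n\hpi
\]
for all $t$. Thus the flow stays hypercritical, $\chi_{\phi_t}$ remains K\"ahler, $\Theta$ is concave along the flow (as recalled in the introduction, concavity is equivalent to hypercriticality), and $\dot\phi_t$ is uniformly bounded.

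The core is to obtain a priori estimates that are uniform in $t$, by transcribing the elliptic estimates of Collins--Jacob--Yau \cite{CJY15} behind Theorem \ref{supere} to the parabolic setting. First, the oscillation bound $\osc_X\phi_t\leq C$ follows from the existence of the $C$-subsolution $\sphi$ via an Alexandrov--Bakelman--Pucci argument, with $C$ independent of $t$ because $\sphi$ and the structural data do not evolve. Next one proves a second-order estimate $\sup_X|\p\bp\phi_t|\leq C(1+\sup_X|\nabla\phi_t|^2)$ by applying the maximum principle to a test quantity built from the largest eigenvalue of $A[\phi_t]$; the $C$-subsolution provides the obstruction to degeneration, and the concavity of $\Theta$ in the hypercritical region — guaranteed above — gives the second-order terms the correct sign. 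A blow-up argument then yields the gradient bound $\sup_X|\nabla\phi_t|\leq C$, and substituting back controls $\p\bp\phi_t$, hence uniform parabolicity. Since $\Theta$ is concave on the resulting relatively compact range of eigenvalues, the parabolic Evans--Krylov theorem \cite{Kry82,Wan12} gives a uniform $C^{2,\b}$ bound, and parabolic Schauder estimates bootstrap this to uniform $C^\infty$ bounds. Standard continuation extends the solution to $[0,\infty)$, proving long-time existence.

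For convergence I would exploit the linear equation $\p_t\Theta_t=\cL_t\Theta_t$ from the first step. By the uniform parabolicity just established, its coefficients are uniformly elliptic and bounded in $C^\infty$, so the Krylov--Safonov Harnack inequality forces $\osc_X\Theta_t$ to decay exponentially; hence $\Theta_t\to c$ uniformly for some constant $c$. To identify $c$, note that the imaginary part is cohomological: $\int_X\Im\big(e^{-\sqrt{-1}\hat{\Theta}}(\a+\sqrt{-1}\chi_{\phi_t})^n\big)=\Im(e^{-\sqrt{-1}\hat{\Theta}}Z)=0$ for all $t$. Writing $(\a+\sqrt{-1}\chi)^n=r_t e^{\sqrt{-1}\Theta_t}\a^n$ with $r_t=\prod_i(1+\lambda_i^2)^{1/2}\geq 1$, this reads $\int_X r_t\sin(\Theta_t-\hat{\Theta})\,\a^n=0$; since $\Theta_t\to c$ uniformly and $\int_X r_t\a^n\geq\int_X\a^n>0$, letting $t\to\infty$ forces $\sin(c-\hat{\Theta})=0$, and because $c,\hat{\Theta}\in((n-1)\hpi,n\hpi)$ we conclude $c=\hat{\Theta}$. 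Finally, exponential decay of $\|\Theta_t-\hat{\Theta}\|_{C^0}$ makes $\dot\phi_t=\Theta_t-\hat{\Theta}$ integrable in $t$, so $\phi_t$ is Cauchy in $C^0$; combined with the uniform $C^\infty$ bounds and interpolation, $\phi_t\to\phi_\infty$ in $C^\infty$ with $\Theta(A[\phi_\infty])=\hat{\Theta}$, i.e. a dHYM metric solving \eqref{dHYMf}.

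The main obstacle is the uniform-in-time second-order estimate: one must verify that the constant produced by the $C$-subsolution argument does not degenerate as $t\to\infty$ and that the parabolic term is genuinely controlled, so that uniform parabolicity — on which both the Evans--Krylov step and the entire convergence argument rest — holds on all of $[0,\infty)$ rather than merely on compact time intervals. Once this is in place, the remainder is a parabolic transcription of the elliptic estimates underlying Theorem \ref{supere}, together with the heat-flow behavior of $\Theta_t$.
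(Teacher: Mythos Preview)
The paper does not actually prove Theorem~\ref{hyperf}; it is quoted as a result from \cite[Remark 7.4]{CJY15}, and the only commentary the paper offers is the single sentence following the statement: ``the point is that the hypercritical phase condition $\Theta(A[\phi_0])>(n-1)\hpi$ is preserved under the LBMCF so that the operator $\Theta(A[\phi_t])$ remains to be concave.'' There is therefore no detailed proof in the paper to compare against.

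Your outline is correct and captures exactly this key point: the maximum principle applied to the linear equation $\p_t\Theta_t=\Theta^{i\bar j}\p_i\p_{\bar j}\Theta_t$ preserves the hypercritical range, after which concavity of $\Theta$ is available and the elliptic machinery of \cite{CJY15} (ABP/oscillation bound from the $C$-subsolution, second-order estimate, blow-up gradient bound, Evans--Krylov, Schauder) carries over to the parabolic setting with constants uniform in $t$. Your convergence argument---Harnack decay of $\osc_X\Theta_t$, identification of the limit via the cohomological identity $\int_X r_t\sin(\Theta_t-\hat\Theta)\,\a^n=0$, and integrability of $\dot\phi_t$---is essentially the same mechanism the paper uses later for the TLPF in Theorem~\ref{convpt} (there the Harnack is applied to $\dot\phi_t+A$, which for the LBMCF is just $\Theta_t-\hat\Theta+A$, so the two are equivalent). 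One small remark: to pass from $\osc_X\Theta_t\to0$ to $\Theta_t\to c$ you implicitly use that $\min_X\Theta_t$ is nondecreasing and $\max_X\Theta_t$ nonincreasing, which you established; and to conclude $\sin(c-\hat\Theta)=0$ from the integral identity you need $\int_X r_t\,\a^n$ bounded above as well as below, which follows from your uniform $C^2$ bound. With those points made explicit, the argument is complete.
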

When proving Theorem \ref{hyperf}, the point is that the hypercritical phase condition $\Theta(A[\phi_0])>(n-1)\hpi$ is preserved under the LBMCF so that the operator $\Theta(A[\phi_t])$ remains to be concave. The author does not know whether the hypercritical phase condition $\Theta(A[\phi_0])>(n-1)\hpi$ in Theorem \ref{hyperf} can be replaced by $\phi_0 \in \cH$. One reason is that the LBMCF defines the gradient flow of the volume functional $\cV$ (\cf Section \ref{ntiform}), but we do not know whether $\cV$ is globally convex. In the original LMCF case it is conjectured by Thomas-Yau \cite[Section 7]{TY02} that we have to assume an additional condition, so called the ``flow-stability'' for the initial Lagrangian to obtain the convergence of the flow (see also \cite{Nev13} for detailed expositions).

Now we give the second main theorem. We show that the TLPF potentially has more global existence and convergence properties. With the aid of Theorem \ref{tanconv} and already known methods, it is standard to show;
\begin{thm} \label{convf}
Let $X$ be a compact complex manifold with a K\"ahler form $\a$, and $\hat{\chi}$ a closed real $(1,1)$-form. Assume that $\hat{\Theta} \in ((n-1)\hpi,n \hpi)$. For any $\phi_0 \in \cH$, let $\phi_t$ be the tangent Lagrangian phase flow \eqref{TLPF} starting from $\phi_0$. Then
\begin{enumerate}
\item The flow $\phi_t$ exists for all positive time.
\item Moreover, if there is a $C$-subsolution, then the flow $\phi_t$ converges to the deformed Hermitian Yang-Mills metric $\phi_\infty \in \cH$ in the $C^\infty$-topology.
\end{enumerate}
\end{thm}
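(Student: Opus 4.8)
The plan is to run the by-now-standard parabolic machinery for fully nonlinear equations of Lagrangian phase type, letting Theorem~\ref{tanconv} supply the concavity that the higher order estimates require; no essentially new ingredient is needed beyond it, since the one genuine obstruction — the failure of concavity of $\Theta$ on $\cH$, which would block the Evans-Krylov step — is exactly what Theorem~\ref{tanconv} removes. We first dispose of short time existence and of the preservation of $\cH$. Short time existence of $\phi_t$ on a maximal interval $[0,T_{\max})$ is immediate from the ellipticity recorded after \eqref{TLPF}. Put $v:=\dot\phi_t=F(A[\phi_t])=\tan\big(\Theta(A[\phi_t])-\hat\Theta\big)$. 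Differentiating the flow $\ddt\phi=F(A[\phi])$ in $t$, and using that in \eqref{TLPF} only $\dd\phi_t$ depends on $t$, one obtains $\ddt v=\mathcal L_t v$, where $\mathcal L_t$ is the linearisation of $F(A[\cdot])$ at $\phi_t$, a (possibly degenerate) elliptic operator with no zeroth or first order term. Hence by the maximum principle $\max_X v(\cdot,t)$ is non-increasing and $\min_X v(\cdot,t)$ non-decreasing, so $\Theta(A[\phi_t])-\hat\Theta$ stays in the compact subinterval $[\arctan(\min_X v_0),\arctan(\max_X v_0)]\Subset(-\hpi,\hpi)$. In particular $\phi_t\in\cH$ and $\sup_X|\dot\phi_t|\le C_0$ uniformly in $t$, and since $\hat\Theta>(n-1)\hpi$ the bound $\Theta(\lambda[\phi_t])\ge\hat\Theta-\hpi+2\d_0$ with $\d_0=\d_0(\phi_0)>0$ forces a uniform lower bound $\lambda_i[\phi_t]\ge-C_1$ on all eigenvalues (in fact $n-1$ of them exceed a fixed positive constant).

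For (1), fix $T<\infty$ and work on $[0,T)$. The $C^0$ bound is automatic: $\|\phi_t-\phi_0\|_{C^0}\le C_0T$. A gradient bound $\sup_X|\nabla\phi_t|\le C$ follows from the $C^0$ bound by the usual maximum principle argument (\cf \cite{JY17}). For the second order bound $\sup_X\Tr A[\phi_t]\le C$ one runs the Collins-Jacob-Yau/Sz\'ekelyhidi maximum principle estimate (\cf \cite{CJY15}) on a test function of the form $\log\lambda_{\max}(A[\phi_t])-B\phi_t$ (plus the usual gradient term): the role normally played by a $C$-subsolution is here taken over by the uniform eigenvalue lower bound $\lambda_i[\phi_t]\ge-C_1$ of the previous paragraph together with $|\dot\phi_t|\le C_0$, which is exactly what makes the relevant cone-interior quantity positive, so no subsolution is needed; the constants then depend on $T$ only through the (harmless) $C^0$ and $C^1$ bounds. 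Now the flow is uniformly parabolic on $X\times(0,T)$, and here Theorem~\ref{tanconv} is decisive: because $F=\tan\circ(\theta-\hat\Theta)$ is concave on $\cS$ — whereas $\Theta$ itself is not concave on $\cH$ — the parabolic Evans-Krylov theory gives a uniform $C^{2,\b}$ bound, and Schauder bootstrapping then yields uniform $C^k$ bounds on $X\times[\d,T)$ for every $k$ and $\d>0$. These contradict the maximality of $T_{\max}$ unless $T_{\max}=\infty$, proving (1). Note that the second order estimate is the delicate point, and that running it without a $C$-subsolution, as part (1) requires, is precisely what uses the automatic eigenvalue lower bound coming from the preservation of $\cH$.

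For (2), assume a $C$-subsolution $\sphi$ exists. The only place the bounds above depended on $T$ was the $C^0$ bound; with the subsolution it upgrades to a genuine uniform bound $\sup_{t\ge0}\|\phi_t\|_{C^0}\le C$, either by the Alexandrov-Bakelman-Pucci argument of Sz\'ekelyhidi/Collins-Jacob-Yau adapted to the flow (using $\phi_t\in\cH$ and that $\sphi$ is a $C$-subsolution), or by pairing the monotonicity of the Kempf-Ness functional $\cJ$ along the TLPF with its properness on $\cH$, which the $C$-subsolution guarantees through the Collins-Yau GIT picture \cite{CY18}. Feeding this uniform $C^0$ bound back through the gradient, second order, $C^{2,\b}$ and higher estimates of the previous paragraph — now all uniform in $t$ — gives uniform $C^k$ bounds on $X\times[1,\infty)$ for every $k$.

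Finally, for convergence, the TLPF being the downward gradient flow of the convex functional $\cJ$ gives $\ddt\cJ(\phi_t)=-\|\dot\phi_t\|_{\phi_t}^2\le0$; since $\cJ$ is bounded below, $\int_0^\infty\|\dot\phi_t\|_{\phi_t}^2\,dt<\infty$, and, controlling $\ddt\|\dot\phi_t\|^2$ by parabolic regularity, $\|\dot\phi_t\|_{C^0}\to0$ along some $t_k\to\infty$. By the uniform $C^k$ bounds and Arzel\`a-Ascoli a subsequence of $\phi_{t_k}$ converges in $C^\infty$ to some $\phi_\infty$, and $\tan\big(\Theta(A[\phi_{t_k}])-\hat\Theta\big)\to0$ together with $|\Theta(A[\phi_\infty])-\hat\Theta|\le\hpi-\d_0$ forces $\Theta(A[\phi_\infty])=\hat\Theta$, so $\phi_\infty\in\cH$ is a dHYM metric. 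Then convexity of $\cJ$ on $\cH$ and $\operatorname{grad}\cJ(\phi_t)=-\dot\phi_t$ give $\ddt\tfrac12 d(\phi_t,\phi_\infty)^2\le\cJ(\phi_\infty)-\cJ(\phi_t)\le0$, so $d(\phi_t,\phi_\infty)$ is non-increasing, hence $\to0$; combined with the uniform higher order bounds this promotes the convergence to $C^\infty$, the convergence of the constant part being obtained as usual from exponential decay of $\phi_t$ modulo constants, i.e., from the spectral gap of $\mathcal L_\infty=-\operatorname{Hess}\cJ(\phi_\infty)$ on mean-zero functions.
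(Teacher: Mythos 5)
Your skeleton is right and you correctly identify the one conceptual role of Theorem \ref{tanconv} (restoring concavity so that Evans--Krylov applies), but there are two genuine gaps at exactly the points the paper has to work hardest. First, the gradient estimate. You assert that $\sup_X|\nabla\phi_t|\le C$ ``follows from the $C^0$ bound by the usual maximum principle argument,'' and you then feed this into your second order estimate for both parts (1) and (2). No such direct maximum principle argument is available here: since $\hat{\Theta}>(n-1)\hpi$ only forces $\phi_t$ to be supercritical (not hypercritical), $\chi_{\phi_t}$ need not be positive, and in the entire dHYM literature the gradient bound is obtained \emph{after} the estimate $|\dd\phi_t|_\a\le C(1+\sup|\nabla\phi_t|_\a^2)$, by a blow-up/Liouville argument. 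The paper does exactly this (following \cite[Proposition 5.1]{CJY15}, using the uniform lower bound on the eigenvalues and Ronkin's theorem), and explicitly flags the gradient estimate as the one step where the general parabolic theory of \cite{PT17} does \emph{not} carry over, because $f$ lacks the structural hypotheses needed for Sz\'ekelyhidi's Liouville theorem. Note also that for part (1) the paper avoids the gradient bound entirely: the test function $\log\lambda_1-Dt$ together with the uniform two-sided bound on $\cF$ closes the second order estimate on $[0,T']$ with no subsolution and no gradient term, which is cleaner than your ``cone-interior quantity'' surrogate and is the route you should take.

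Second, the convergence mechanism in (2) is not rigorous as written. Your argument $\ddt\tfrac12 d(\phi_t,\phi_\infty)^2\le\cJ(\phi_\infty)-\cJ(\phi_t)$ presupposes a developed Riemannian/metric geometry on $\cH$ (existence and regularity of geodesics, first variation of distance along the gradient flow, convexity of $\cJ$ along those geodesics), none of which is established here at the regularity needed; likewise the ``spectral gap of $\operatorname{Hess}\cJ(\phi_\infty)$'' step assumes the flow is already close to $\phi_\infty$. The paper sidesteps all of this: once the flow is uniformly parabolic, the differential Harnack inequality of Gill applied to $\psi=\ddt\phi+A$ gives $\osc_X\ddt\phi\le C_1e^{-C_2t}$, and since $\int_X\ddt\phi\cdot\Re\big(e^{-\sqrt{-1}\hat{\Theta}}(\a+\sqrt{-1}\chi_\phi)^n\big)=0$ with a positive measure, $\ddt\phi$ vanishes somewhere at each time, so $|\ddt\phi|\le C_1e^{-C_2t}$ pointwise and $\phi_t+\tfrac{C_1}{C_2}e^{-C_2t}$ converges monotonically. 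Also, your alternative $C^0$ route via ``properness of $\cJ$ from the $C$-subsolution'' is unsupported; the ABP route works, but only gives a one-sided bound in the parabolic setting, and the paper must supplement it with a Harnack-type inequality that crucially uses the conservation of $\cC(\phi_t)$ along the TLPF --- a point your proposal omits.
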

Our notion of $C$-subsolutions coincides with that defined in \cite{CJY15}. Indeed, the analysis of the TLPF proceeds very closely to general theory of fully non-linear parabolic equations whose RHS is concave \cite{PT17} (based on the elliptic case \cite{Sze18}). We would like to make several comments on Theorem \ref{convf};
\begin{itemize}
\item In Theorem \ref{convf} the flow-stability assumption as  in \cite{TY02} is not needed. Moreover, the initial condition $\phi_0 \in \cH$ is sharp (otherwise, the TLPF \eqref{TLPF} is not well-defined). One should compare this with Neves's result \cite{Nev13} in which he showed that without the almost calibrated assumption a finite-time singularity occurs under the LMCF. We expect that the same is true of the LBMCF.
\item Theorem \ref{convf} removes the hypercritical phase condition $\Theta(A[\phi_0])>(n-1)\hpi$ imposed in Theorem \ref{hyperf}, and gives a new proof for the existence of dHYM metrics (we note that if $\hat{\Theta}>(n-1)\hpi$, then any $C$-subsolution $\sphi$ is almost calibrated by Remark \ref{subac}, and hence we can take this $\sphi$ as the initial data for instance).
\item The assumption of Theorem \ref{convf} is stronger than that of Theorem \ref{supere}. However we emphasize that Theorem \ref{convf} has a more natural meaning from geometric/variational point of view, and the estimates involved are much simpler than those in \cite{CJY15}.
\end{itemize}
Even if there are no canonical metrics, gradient flows (in particular, Theorem \ref{convf} (1)) are thought to be effective for constructing an optimally destabilizing one-parameter subgroups in GIT, or an analogue of the Harder-Narasimhan filtrations of unstable vector bundles. Now this problem catches a great deal of attention (for instance, see related results \cite{CHT17,DS16,His19,Sjo19,Xia19} studied in K\"ahler settings).

This paper is organized as follows. In Section \ref{fnd}, we fix some notations by following \cite{CXY17,JY17}, and study the basic properties of the space $\cH$ and functionals on it. In Section \ref{tcv}, we give a proof of Theorem \ref{tanconv} that is the core of this paper. In Section \ref{tlpf}, we prove some monotonicity formulas along the TLPF. Then we prove the long time existence of the flow which is the first part of Theorem \ref{convf}. In Section \ref{convfpC}, we recall the notion of $C$-subsolutions defined in \cite{CJY15}. Then in accordance with \cite{PT17}, we estabilish the $C^k$ estimates and the $C^\infty$-convergence of the TLPF, which shows the second part of Theorem \ref{convf}.
\begin{ackn}
The author expresses his gratitude to Prof. F.~R.~Harvey and H.~B.~Lawson, Jr. for pointing out a relation between Theorem \ref{tanconv} and their work \cite{HL20}.
\end{ackn}
\section{Foundations} \label{fnd}
\subsection{Notations and formulas} \label{ntiform}
Let $X$ be an $n$-dimensional compact complex manifold with a K\"ahler form $\a$ and $\hat{\chi}$ a closed real $(1,1)$-form. First we fix some notations. For $\phi \in C^\infty(X;\R)$, we define a Hermitian metric $\eta$\footnote{The metric $\eta$ is only used in the computation of $\ddt \cV$ in Proposition \ref{eff}.} on $T^{1,0}X$ by
\[
\eta_{i \bar{j}}:=\a_{i \bar{j}}+\chi_{i \bar{\ell}} \a^{k \bar{\ell}} \chi_{k \bar{j}}.
\]
Let
\[
v:=\bigg| \frac{(\a+\sqrt{-1}\chi_\phi)^n}{\a^n} \bigg|=\sqrt{\prod_{i=1}^n (1+\lambda_i^2)},
\]
where $\lambda_i$ denotes the eigenvalues of $A[\phi]$. In particular, this implies $v \geq 1$. In terms of $\cV$ and $\Theta(A[\phi])$, the $\C$-valued function $(\a+\sqrt{-1}\chi_\phi)^n/\a^n$ is expressed as
\[
\frac{(\a+\sqrt{-1}\chi_\phi)^n}{\a^n}=v e^{\sqrt{-1}\Theta(A[\phi])}.
\]
Integrating with respect to $\a^n$ we get
\[
e^{-\sqrt{-1}\hat{\Theta}}Z=\int_X v e^{\sqrt{-1}(\Theta(A[\phi])-\hat{\Theta})} \a^n=\int_X v\cos(\Theta(A[\phi])-\hat{\Theta})\a^n+\sqrt{-1} \int_X v \sin(\Theta(A[\phi])-\hat{\Theta}) \a^n.
\]
So by the definition of $\hat{\Theta}$ we know that $e^{-\sqrt{-1}\hat{\Theta}}Z$ is real so that
\[
|Z|=\int_X v\cos(\Theta(A[\phi])-\hat{\Theta})\a^n, \quad \int_X v \sin(\Theta(A[\phi])-\hat{\Theta}) \a^n=0.
\]
According to \cite[Section 2]{CY18}, we define functionals on the space of almost calibrated potentials $\cH$. We define the {\it Calabi-Yau functional} $CY_\C$ by
\[
CY_\C(\phi):=\frac{1}{n+1} \sum_{j=0}^n \int_X \phi (\a+\sqrt{-1} \chi_\phi)^j \wedge (\a+\sqrt{-1} \hat{\chi})^{n-j}, \quad \phi \in \cH.
\]
This is a $\C$-valued functional. Then the variational formula of $CY_\C$ is given by
\[
\d CY_\C(\d \phi)=\int_X \d \phi (\a+\sqrt{-1} \chi_\phi)^n.
\]
Also we set
\[
\cC(\phi):=\Re \big( e^{-\sqrt{-1}\hat{\Theta}} CY_\C(\phi) \big),
\]
\[
\cJ(\phi):=-\Im \big( e^{-\sqrt{-1}\hat{\Theta}} CY_\C(\phi) \big).
\]
Then the variational formula of $CY_\C$ yields that
\[
\d \cC(\d \phi)=\int_X \d \phi \Re \big( e^{-\sqrt{-1}\hat{\Theta}}(\a+\sqrt{-1}\chi_\phi)^n \big),
\]
\[
\d \cJ(\d \phi)=-\int_X \d \phi \Im \big( e^{-\sqrt{-1}\hat{\Theta}}(\a+\sqrt{-1}\chi_\phi)^n \big).
\]
Also we define the {\it volume functional} by
\[
\cV(\phi):=\int_X v_\phi \a^n.
\]
The function $\cV$ is non-negative. More precisely, we have $\cV(\phi) \geq |Z|$ for all $\phi \in \cH$ (\cf \cite[Proposition 3.2]{JY17}). The variational formula of $\cV$ (\cf \cite[Proposition 3.4]{JY17}) is given by
\[
\d \cV(\d \phi)=\int_X \langle d \Theta(A[\phi]), d \d \phi \rangle_\eta v_\phi \a^n.
\]
These functionals have the following properties;
\begin{prop}
For any $\phi \in \cH$ and $c \in \R$ we have
\begin{enumerate}
\item $CY_\C(\phi+c)=CY_\C(\phi)+cZ$.
\item $\cC(\phi+c)=\cC(\phi)+c|Z|$.
\item $\cJ(\phi+c)=\cJ(\phi)$.
\end{enumerate}
\end{prop}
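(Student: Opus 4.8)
The plan is to derive everything from statement (1) together with the normalization of $\hat{\Theta}$, since (2) and (3) follow from (1) by taking real and imaginary parts.

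For (1), I would argue along the affine path $\phi_t := \phi + tc$, $t \in [0,1]$. Because $c$ is a constant, $\dd(tc) = 0$, so $\chi_{\phi_t} = \chi_\phi$ for every $t$, and the variational formula $\d CY_\C(\d\phi) = \int_X \d\phi\,(\a+\sqrt{-1}\chi_\phi)^n$ applied with $\d\phi = c$ gives
\[
\ddt CY_\C(\phi_t) = c\int_X (\a+\sqrt{-1}\chi_{\phi_t})^n .
\]
Now $\a+\sqrt{-1}\chi_{\phi_t}$ is closed and its de Rham class equals $[\a+\sqrt{-1}\hat{\chi}]$, since the two differ by the exact form $\sqrt{-1}\,\dd(\phi+tc)$; hence by Stokes' theorem $\int_X (\a+\sqrt{-1}\chi_{\phi_t})^n = \int_X (\a+\sqrt{-1}\hat{\chi})^n = Z$, which is precisely why $Z$ is a topological invariant. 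Thus $\ddt CY_\C(\phi_t) = cZ$ is constant in $t$, and integrating over $[0,1]$ yields $CY_\C(\phi+c) = CY_\C(\phi) + cZ$. (Equivalently, one may expand $CY_\C(\phi+c)$ directly using $\chi_{\phi+c} = \chi_\phi$ and observe that each of the $n+1$ summands $\int_X (\a+\sqrt{-1}\chi_\phi)^j \wedge (\a+\sqrt{-1}\hat{\chi})^{n-j}$ equals $Z$ by the same cohomological argument, so the correction term is $\tfrac{c}{n+1}(n+1)Z = cZ$.)

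For (2) and (3), I would multiply the identity in (1) by $e^{-\sqrt{-1}\hat{\Theta}}$ to get
\[
e^{-\sqrt{-1}\hat{\Theta}} CY_\C(\phi+c) = e^{-\sqrt{-1}\hat{\Theta}} CY_\C(\phi) + c\, e^{-\sqrt{-1}\hat{\Theta}} Z .
\]
By the defining property of $\hat{\Theta}$, namely $e^{-\sqrt{-1}\hat{\Theta}} Z \in \R_{>0}$, one has $e^{-\sqrt{-1}\hat{\Theta}}Z = |Z|$, so $\Re\big(e^{-\sqrt{-1}\hat{\Theta}}Z\big) = |Z|$ and $\Im\big(e^{-\sqrt{-1}\hat{\Theta}}Z\big) = 0$. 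Taking $\Re$ of the displayed identity gives $\cC(\phi+c) = \cC(\phi) + c|Z|$, and taking $-\Im$ gives $\cJ(\phi+c) = \cJ(\phi)$.

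I do not expect any genuine obstacle: the argument is a few lines. The only point that deserves an explicit word is the $\phi$-independence of $\int_X(\a+\sqrt{-1}\chi_\phi)^n$, which is exactly the statement that $Z$ is well defined as a topological invariant, and the observation that constants are annihilated by $\dd$ so that the reference form $\chi_\phi$ is unchanged under $\phi \mapsto \phi + c$.
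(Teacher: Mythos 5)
Your proposal is correct: the paper states this proposition without proof, and your argument (constants are annihilated by $\dd$, each of the $n+1$ mixed integrals equals $Z$ by cohomological invariance, then take real and imaginary parts using $e^{-\sqrt{-1}\hat{\Theta}}Z=|Z|$) is exactly the routine verification the paper leaves implicit. No gaps.
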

We collect the properties of $f$ that we will need;
\begin{prop} \label{bps}
Suppose we have real numbers $\lambda_1 \geq \ldots \geq \ldots \lambda_n$ which satisfy $\theta(\lambda)=\s$ for $\s \in ((n-2)\hpi,n \hpi)$. Then $\lambda=(\lambda_1,\ldots,\lambda_n)$ have the following properties;
\begin{enumerate}
\item $\lambda_1 \geq \lambda_2 \geq \ldots \geq \lambda_{n-1}>0$ and $\lambda_{n-1} \geq |\lambda_n|$. In particular, $\lambda$ lies in the set $\{\lambda \in \R^n|\sum_{i=1}^n \lambda_i > 0 \}$.
\item The set $\{\lambda \in \R^n|\theta(\lambda) \geq \s\}$ is convex with boundary a smooth, convex hypersurface.
\end{enumerate}
Furthermore, if $\s \geq (n-1)\hpi$, then
\begin{enumerate}
\setcounter{enumi}{2}
\item $\lambda_n>0$.
\end{enumerate}
In addition, if $\s \geq (n-2)\hpi+\b$, then there exist constants $\e(\b)>0$ and $C(\b)>0$ such that
\begin{enumerate}
\setcounter{enumi}{3}
\item if $\lambda_n \leq 0$, then $\lambda_{n-1} \geq \e(\b)$.
\item $|\lambda_n|<C(\b)$.
\end{enumerate}
\end{prop}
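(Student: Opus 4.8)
The plan is to extract parts (1), (3), (4) and (5) from elementary properties of the arctangent --- it is strictly increasing with range $(-\hpi,\hpi)$, so any sum of $k$ among the $n$ summands of $\theta(\lambda)=\sum_{i=1}^n\arctan\lambda_i$ is $\le k\hpi$, strictly so when $k\ge 1$ --- while part (2) will be quoted. (Parts (1)--(3) are essentially \cite[Proposition~2.2]{CJY15}; (4)--(5) are quantitative refinements.) I keep the constraint as $\s=\theta(\lambda)$ and use the ordering $\lambda_1\ge\cdots\ge\lambda_n$ freely. For (1): if $\lambda_{n-1}\le 0$ then $\lambda_n\le\lambda_{n-1}\le 0$, so $\arctan\lambda_{n-1}+\arctan\lambda_n\le 0$ while the remaining $n-2$ terms sum to $\le(n-2)\hpi$, whence $\theta(\lambda)\le(n-2)\hpi<\s$, contradicting $\theta(\lambda)=\s$; hence $\lambda_1\ge\cdots\ge\lambda_{n-1}>0$. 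For $\lambda_{n-1}\ge|\lambda_n|$ the case $\lambda_n\ge 0$ is trivial, and for $\lambda_n<0$ I would isolate the last two terms,
\[
\arctan\lambda_{n-1}+\arctan\lambda_n=\s-\sum_{i=1}^{n-2}\arctan\lambda_i\ge\s-(n-2)\hpi>0,
\]
so $\arctan\lambda_{n-1}>\arctan(-\lambda_n)$, i.e.\ $\lambda_{n-1}>|\lambda_n|$. In either case $\lambda_{n-1}+\lambda_n>0$, and since $\lambda_1,\dots,\lambda_{n-2}>0$ (an empty list when $n=2$) this gives $\sum_{i=1}^n\lambda_i>0$.

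Part (2) is Yuan's convexity theorem \cite{Yua05}, which says the body $\{\theta\ge\s\}$ is convex for $\s\ge(n-2)\hpi$; its boundary is $\{\theta=\s\}$, which is a smooth hypersurface because $\nabla\theta=\bigl(\tfrac{1}{1+\lambda_1^2},\dots,\tfrac{1}{1+\lambda_n^2}\bigr)$ never vanishes, and it is convex as the boundary of a convex body. For (3): if $\s\ge(n-1)\hpi$ but $\lambda_n\le 0$, then $\arctan\lambda_n\le 0$ and the first $n-1$ terms sum to $<(n-1)\hpi$, so $\theta(\lambda)<(n-1)\hpi\le\s$, a contradiction; hence $\lambda_n>0$.

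Finally, for (4) and (5) assume $\s\ge(n-2)\hpi+\b$; only the case $\lambda_n\le 0$ requires argument, and by (3) this forces $\s<(n-1)\hpi$, i.e.\ $\b<\hpi$, so $\tan\b,\cot\b>0$ are finite. Isolating $\arctan\lambda_{n-1}$ and using $\arctan\lambda_n\le 0$ together with $\sum_{i=1}^{n-2}\arctan\lambda_i\le(n-2)\hpi$ gives $\arctan\lambda_{n-1}\ge\s-(n-2)\hpi\ge\b$, hence $\lambda_{n-1}\ge\tan\b=:\e(\b)$, which is (4). Isolating instead $\arctan\lambda_n$ and using $\sum_{i=1}^{n-1}\arctan\lambda_i<(n-1)\hpi$ gives $\arctan\lambda_n>\s-(n-1)\hpi\ge\b-\hpi$, hence $\lambda_n>\tan(\b-\hpi)=-\cot\b$; so $|\lambda_n|<\cot\b=:C(\b)$ when $\lambda_n\le 0$ (and $\lambda_n>-C(\b)$ trivially otherwise), which is (5).

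I expect no real obstacle: the single non-elementary ingredient is Yuan's theorem feeding (2), quoted directly; everything else is careful bookkeeping of where each $\arctan\lambda_i$ sits relative to $\hpi$, combined with monotonicity of $\tan$ on $(-\hpi,\hpi)$ to convert angle estimates into estimates on the $\lambda_i$. The only points demanding mild care are the degenerate case $n=2$, where one of the sub-sums above is empty (which is why the bounds are stated with $\le$ rather than $<$ where indicated), and checking that $\e(\b)$ and $C(\b)$ depend on $\b$ alone --- which they do, being $\tan\b$ and $\cot\b$.
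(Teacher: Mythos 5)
Your proposal is correct in substance but necessarily takes a different route from the paper, because the paper does not actually prove this proposition: it dismisses (3) as trivial and refers everything else to \cite[Lemma 3.1]{CJY15}. Your self-contained bookkeeping with $\arctan$ for (1), (3) and (4) is exactly the kind of argument that reference contains, and each step checks out, including the care at $n=2$ and the observation that $\lambda_n\le 0$ forces $\b<\hpi$ so that $\tan\b$ and $\cot\b$ are finite. Quoting Yuan for (2) is also what the cited source does.

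The one point to flag is (5). What you actually prove is the lower bound $\lambda_n>\tan(\b-\hpi)=-\cot\b$, hence $|\lambda_n|<\cot\b$ only in the case $\lambda_n\le 0$; for $\lambda_n>0$ you provide no upper bound, and your parenthetical ``trivially otherwise'' does not close this. In fact the literal statement ``$|\lambda_n|<C(\b)$ with $C$ depending only on $\b$'' is false: taking $\lambda=(R,\dots,R)$ with $R\to\infty$ gives $\theta(\lambda)\to n\hpi$, so the hypothesis $\s\geq(n-2)\hpi+\b$ holds for $R$ large while $\lambda_n=R$ is unbounded. So the gap is in the statement rather than in your argument, but you should say so explicitly rather than assert that you have proved (5) as written. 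The missing upper bound requires an upper bound on $\s$: since $\lambda_n$ is the smallest eigenvalue, $n\arctan\lambda_n\leq\theta(\lambda)=\s$, so $\lambda_n\leq\tan(\s/n)$, which is finite precisely when $\s$ is bounded away from $n\hpi$. This is exactly how the proposition is used in the paper (Corollary \ref{unifev}), where Lemma \ref{sff} caps $\s$ by $\sup_X\Theta(A[\phi_0])<n\hpi$, so the constant there legitimately depends on $\phi_0$ and not on $\b$ alone.
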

\begin{proof}
The property (3) is trivial. See \cite[Lemma 3.1]{CJY15} for other statements.
\end{proof}
We end up this subsection with showing some properties of $F$. Let us write $F^{ij}$ for the derivative of $F$ with respect to the $ij$-entry of $A$. Then at a diagonal matrix $A$ we have
\begin{equation} \label{propfF}
F^{ij}=\d_{ij}f_i,
\end{equation}
\begin{equation} \label{propsF}
F^{ij,rs}=f_{ir}\d_{ij}\d_{rs}+\frac{f_i-f_j}{\lambda_i-\lambda_j}(1-\d_{ij})\d_{is}\d_{jr}.
\end{equation}
The first formula shows that the operator $F(A[\phi])$ is elliptic. In the second formula, we note that $\frac{f_i-f_j}{\lambda_i-\lambda_j} \leq 0$. In particular, we have $f_i \leq f_j$ if $\lambda_i \geq \lambda_j$ (see \cite[Section 4]{Sze18} for more details).
\subsection{Fundamental estimates on $\cH$}
First, by Proposition \ref{bps} (1), we have;
\begin{lem}[Green function estimate] \label{gfs}
For any $\phi \in \cH$, we have a uniform bound $\Delta_{\a} \phi \geq -C$ for some uniform constant $C>0$ depending only on $\a$ and $\hat{\chi}$. In particular, we have
\[
\sup_X \phi \leq \int_X \phi \a^n+C'
\]
for some uniform constant $C'>0$ depending only on $\a$ and $\hat{\chi}$.
\end{lem}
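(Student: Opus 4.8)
The plan is to prove the Green function estimate in two stages. First I would establish the lower bound $\Delta_\a \phi \geq -C$ pointwise, and then deduce the $\sup$ estimate by a standard potential-theoretic argument using the Green function of the Laplacian $\Delta_\a$ on $(X,\a)$.

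For the first stage, fix $p \in X$ and work in $\a$-normal coordinates at $p$, so that $A[\phi]$ is diagonal with eigenvalues $\lambda_1 \geq \cdots \geq \lambda_n$. By definition $\Delta_\a \phi = \Tr A[\phi] - \Tr(\hat{\chi}_{i\bar j}\a^{k\bar j}) = \sum_i \lambda_i - \Tr_\a \hat{\chi}$. Since $\phi \in \cH$ means $|\Theta(A[\phi]) - \hat\Theta| < \hpi$, and $\hat\Theta \in ((n-1)\hpi, n\hpi)$ gives $\Theta(A[\phi]) > (n-2)\hpi$, Proposition \ref{bps}(1) applies and yields $\sum_i \lambda_i > 0$. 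Hence $\Delta_\a \phi = \sum_i \lambda_i - \Tr_\a \hat\chi \geq -\Tr_\a \hat\chi \geq -C$, where $C := \sup_X |\Tr_\a \hat\chi|$ depends only on $\a$ and $\hat\chi$. (If one wishes to track the dependence more carefully, one notes $\hat\Theta$ and the constant angle are themselves determined by $\a$ and $\hat\chi$, so no further data enters.)

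For the second stage, let $G(x,y)$ be the Green function for $\Delta_\a$ normalized so that, writing $V = \int_X \a^n$,
\[
\phi(x) = \frac{1}{V}\int_X \phi\, \a^n - \int_X G(x,y)\, \Delta_\a \phi(y)\, \a^n(y),
\]
and recall that $G$ is bounded below, say $G(x,y) \geq -K$ for a constant $K = K(\a) > 0$. Then for any $x$,
\[
\phi(x) \leq \frac{1}{V}\int_X \phi\, \a^n + \int_X \big(K + G(x,y)\big)\big(\Delta_\a \phi(y) + C\big)\, \a^n(y) - \int_X(K+G(x,y))\,C\, \a^n(y) + (\text{sign bookkeeping}),
\]
so that using $\Delta_\a \phi + C \geq 0$ from the first stage and $K + G \geq 0$, the middle integral is controlled by $(K + \sup G)\cdot C \cdot V$; after renormalizing $\int_X \phi\,\a^n$ against $\int_X \phi\,\a^n$ (the excerpt's display uses the unnormalized integral, differing only by the fixed factor $V$), one obtains $\sup_X \phi \leq \int_X \phi\,\a^n + C'$ with $C'$ depending only on $\a$ and $\hat\chi$. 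This is the classical Yau-type argument and I would present it in a line or two, citing the standard reference.

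The only genuine content is the pointwise bound $\sum_i \lambda_i > 0$, which is exactly Proposition \ref{bps}(1); the rest is boilerplate. Consequently I do not expect any real obstacle — the proof is essentially a two-line deduction once Proposition \ref{bps}(1) is in hand, and the main thing to be careful about is simply stating clearly that the supercritical condition $\Theta(A[\phi]) > (n-2)\hpi$ holds for all $\phi \in \cH$ (which uses $\hat\Theta > (n-1)\hpi$ together with $|\Theta(A[\phi]) - \hat\Theta| < \hpi$), so that Proposition \ref{bps} is applicable.
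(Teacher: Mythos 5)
Your proposal is correct and follows exactly the paper's (essentially one-line) argument: the lower bound $\Delta_\a\phi\geq -C$ comes from Proposition \ref{bps}(1) giving $\sum_i\lambda_i>0$ for almost calibrated $\phi$ (using $\hat\Theta>(n-1)\hpi$ so that $\Theta(A[\phi])>(n-2)\hpi$), and the sup bound is the standard Green function argument. The paper gives no more detail than this, so there is nothing to add beyond tidying the ``sign bookkeeping'' in your second display.
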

\begin{prop}[Harnack type inequality] \label{har}
Assume $\hat{\Theta} \in ((n-1)\hpi,n \hpi)$. Then for any $\phi \in \cH$, there exists a constant $C$ and $C'$ depending only on $\a$, $\hat{\chi}$, $\hat{\Theta}$, $\inf_X \Theta(A[\phi])$ and $\cC(\phi)$ such that
\[
\sup_X \phi \leq -C \inf_X \phi+C'.
\]
\end{prop}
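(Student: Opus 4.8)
The plan is to combine the Green-type bound of Lemma~\ref{gfs}, $\sup_X\phi\le\int_X\phi\,\a^n+C_1$, with a lower bound for the functional $\cC$ obtained by integrating its first variation along a straight segment. The observation that makes this cheap is that, since $\hat\Theta\in((n-1)\hpi,n\hpi)$, we have $\hat\Theta+\hpi>n\hpi>\Theta(A[\cdot])$ automatically; hence the ``upper'' constraint in the definition of $\cH$ is vacuous, $\cH=\{\phi:\Theta(A[\phi])>\hat\Theta-\hpi\}$ is a super-level set of $\Theta$ at a level strictly above $(n-2)\hpi$, and the hypothesis on $\inf_X\Theta(A[\phi])$ keeps $\Theta(A[\phi])-\hat\Theta$ bounded away from $-\hpi$ (it is $<n\hpi-\hat\Theta<\hpi$ from above). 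So $\cos(\Theta(A[\phi])-\hat\Theta)$ admits a uniform lower bound by a positive constant depending only on $\a,\hat\chi,\hat\Theta$ and $\inf_X\Theta(A[\phi])$.

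First I would put $\psi:=\phi-\inf_X\phi\ge0$; as $\cH$ and $\Theta(A[\cdot])$ are invariant under adding constants, $\psi\in\cH$, by the shift property $\cC(\psi)=\cC(\phi)-|Z|\inf_X\phi$, and $\cC(0)=0$ because $CY_\C(0)=0$. For $s\in[0,1]$ and $x\in X$ the matrix $A[s\psi](x)=(1-s)A[0](x)+s\,A[\psi](x)$ is a convex combination of two Hermitian matrices whose Lagrangian phases are $\ge\sigma_*:=\min\{\inf_X\Theta(A[0]),\inf_X\Theta(A[\phi])\}$; since $0,\phi\in\cH$ and $\hat\Theta>(n-1)\hpi$ we have $\sigma_*>\hat\Theta-\hpi>(n-2)\hpi$, so by the convexity of $\{\Theta\ge\sigma_*\}$ from Proposition~\ref{bps}(2), i.e.\ by the quasiconcavity of $\Theta$ on $\{\Theta>(n-2)\hpi\}$, one gets $\Theta(A[s\psi])(x)\ge\sigma_*$, hence $\cos(\Theta(A[s\psi])-\hat\Theta)\ge c_*:=\min\{\cos(\sigma_*-\hat\Theta),\cos(n\hpi-\hat\Theta)\}>0$ uniformly in $s$ and $x$. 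Integrating $\d\cC(\d\phi)=\int_X\d\phi\,\Re\!\big(e^{-\sqrt{-1}\hat\Theta}(\a+\sqrt{-1}\chi_\phi)^n\big)$ along $\phi_s:=s\psi$, and using $\Re\!\big(e^{-\sqrt{-1}\hat\Theta}(\a+\sqrt{-1}\chi_{\phi_s})^n\big)=v_{\phi_s}\cos(\Theta(A[\phi_s])-\hat\Theta)\,\a^n$ together with $v\ge1$ and $\psi\ge0$, one obtains
\[
\cC(\psi)=\int_0^1\!\!\int_X\psi\,v_{\phi_s}\cos\!\big(\Theta(A[\phi_s])-\hat\Theta\big)\,\a^n\,ds\ \ge\ c_*\int_X\psi\,\a^n .
\]

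Finally, writing $\int_X\psi\,\a^n=\int_X\phi\,\a^n-(\inf_X\phi)\int_X\a^n$ and using $|Z|=\int_X v_\phi\cos(\Theta(A[\phi])-\hat\Theta)\,\a^n\ge c_*\int_X\a^n$, the displayed inequality rearranges to $\int_X\phi\,\a^n\le \cC(\phi)/c_*-C\inf_X\phi$ with $C:=|Z|/c_*-\int_X\a^n\ge0$; plugging this into Lemma~\ref{gfs} yields $\sup_X\phi\le -C\inf_X\phi+C'$ with $C'=\cC(\phi)/c_*+C_1$, and both constants depend only on $\a,\hat\chi,\hat\Theta,\inf_X\Theta(A[\phi])$ and $\cC(\phi)$. (When $\inf_X\phi\ge0$ the bound $\cC(\psi)\ge0$ already forces $\inf_X\phi\le\cC(\phi)/|Z|$, so $\phi$ is a priori bounded and one merely enlarges $C'$.) The only point requiring care is the uniform positivity of $\cos(\Theta(A[\phi_s])-\hat\Theta)$ along the whole segment — the interplay of the vacuity of the upper constraint, the quasiconcavity of $\Theta$ from Proposition~\ref{bps}(2), and the hypothesis $\inf_X\Theta(A[\phi])$; everything else is elementary manipulation of the identities recorded in this section.
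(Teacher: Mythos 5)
Your argument is correct and follows essentially the same route as the paper's: both integrate the first variation of $\cC$ along the linear segment to $0$, use the quasiconcavity of $\Theta$ above $(n-2)\hpi$ (the paper cites \cite[Lemma 3.1 (7)]{CY18}, you derive it from Proposition \ref{bps}(2)) together with the trivial upper bound from $\hat{\Theta}>(n-1)\hpi$ to get the uniform lower bound $c\,\a^n$ on the measure, and then invoke the Green function estimate. The only cosmetic difference is the normalization ($\psi=\phi-\inf_X\phi\geq 0$ versus the paper's $\tilde{\phi}=\phi-|Z|^{-1}\cC(\phi)$ with $\cC(\tilde{\phi})=0$), which leads to the same constants.
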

\begin{proof}
For any $\phi \in \cH$, we set $\tilde{\phi}:=\phi-|Z|^{-1}\cC(\phi)$ so that $\cC(\tilde{\phi})=0$. We may assume that $0 \in \cH$. We connect $\tilde{\phi}$ with the base point $0$ by a segment $s\tilde{\phi}$ ($s \in [0,1]$), so we have $\chi_{s\tilde{\phi}}=s \chi_{\tilde{\phi}}+(1-s)\hat{\chi}$. Since $\hat{\Theta}>(n-1)\hpi$, we have a trivial upper bound
\[
\Theta(A[s \tilde{\phi}])-\hat{\Theta}<n \hpi-\hat{\Theta}<\hpi.
\]
Moreover, by \cite[Lemma 3.1 (7)]{CY18}, for all $s \in [0,1]$ we have
\begin{eqnarray*}
\Theta(A[s\tilde{\phi}])-\hat{\Theta} &\geq& \min \{ \Theta(A[0]), \Theta(A[\tilde{\phi}]) \}-\hat{\Theta}\\
&\geq& \min \{ \inf_X \Theta(A[0]), \inf_X \Theta(A[\phi]) \}-\hat{\Theta}\\
&>&-\hpi
\end{eqnarray*}
(indeed, the set $\cH$ is convex when $\hat{\Theta}>(n-1)\hpi$ as pointed out in \cite[Section 2]{CY18}). Combining with $v_{s \tilde{\phi}} \geq 1$ we have
\[
\Re \big( e^{-\sqrt{-1}\hat{\Theta}}(\a+\sqrt{-1}\chi_{s \tilde{\phi}})^n \big)=v_{s \tilde{\phi}} \cos (\Theta(A[s \tilde{\phi}])-\hat{\Theta}) \a^n \geq c \a^n
\]
for some constant $c>0$ depending only on $\hat{\chi}$, $\hat{\Theta}$ and $\inf_X \Theta(A[\phi])$. Thus $\nu_{\tilde{\phi}}:=\int_0^1\Re \big( e^{-\sqrt{-1}\hat{\Theta}}(\a+\sqrt{-1}\chi_{s\tilde{\phi}})^n \big) ds$ defines a positive measure with volume $|Z|$ and a uniform lower bound $c\a^n$. On the other hand, the variational formula of $\cC$ implies that
\[
0=\cC(\tilde{\phi})=\int_0^1 \int_X \tilde{\phi} \Re \big( e^{-\sqrt{-1}\hat{\Theta}}(\a+\sqrt{-1}\chi_{s\tilde{\phi}})^n \big) ds=\int_X \tilde{\phi} \nu_{\tilde{\phi}}.
\]
In particular, this shows that $\sup_X \tilde{\phi} \geq 0$ and $\inf_X \tilde{\phi} \leq 0$. Keeping this in mind, we compute
\[
\int_X \tilde{\phi} \a^n=-\frac{1}{c} \int_X \tilde{\phi}(\nu_{\tilde{\phi}}-c\a^n) \leq - \frac{1}{c} \bigg[ \int_X (\nu_{\tilde{\phi}}-c\a^n) \bigg] \inf_X \tilde{\phi}=-C_1 \inf_X \tilde{\phi},
\]
where we put $C_1:=c^{-1}|Z|-[\a]^n$. Combining with the Green function estimate (\cf Lemma \ref{gfs}) we have
\[
\sup_X \tilde{\phi} \leq \int_X \tilde{\phi} \a^n+C_2 \leq -C_1 \inf_X \tilde{\phi}+C_2.
\]
Putting $\tilde{\phi}=\phi-|Z|^{-1}\cC(\phi)$ into the above, we obtain the desired formula.
\end{proof}
\section{Tan-concavity} \label{tcv}
Now we give a proof of Theorem \ref{tanconv}.
\begin{proof}[Proof of Theorem \ref{tanconv}]
From a direct computation, we have
\[
df=(1+f^2)d \theta, \quad d \theta=\sum_i \frac{d \lambda_i}{1+\lambda_i^2},
\]
\[
\nabla^2 \theta=-\sum_i \frac{2 \lambda_i}{(1+\lambda_i)^2} d\lambda_i \otimes d\lambda_i,
\]
\begin{eqnarray*}
\nabla^2 f &=& 2f df \otimes d \theta+(1+f^2) \nabla^2 \theta \\
&=& 2f(1+f^2) d \theta \otimes d \theta+(1+f^2) \nabla^2 \theta \\
&=& (1+f^2)(2f d \theta \otimes d \theta+\nabla^2 \theta).
\end{eqnarray*}
If we set $T(\lambda):=\tan((n-1)\hpi-\theta(\lambda))$, we observe that $f<-T$ since
\[
-\hpi<(n-1)\hpi-\theta < \hat{\Theta}-\theta<\hpi
\]
from the assumption. Thus
\[
\nabla^2 f \leq (1+f^2)(-2T d \theta \otimes d \theta+\nabla^2 \theta)
\]
since $d \theta \otimes d \theta$ is semipositive. In the standard coordinates, the form $-\frac{1}{2}(-2T d \theta \otimes d \theta+\nabla^2 \theta)$ has the following matrix representation
\begin{eqnarray*}
M:=\bigg( \frac{T+\d_{ij} \lambda_i}{(1+\lambda_i^2)(1+\lambda_j^2)} \bigg).
\end{eqnarray*}
Then it suffices to show that $M \geq 0$, \ie $M$ is positive semidefinite. For $k=1,\ldots,n$ and $I \subset \{1,\ldots,n \}$, let $M_I$ be the principal submatrix of $M$ associated to $I$, \ie
\[
M_I=\bigg( \frac{T+\d_{ij} \lambda_i}{(1+\lambda_i^2)(1+\lambda_j^2)} \bigg)_{i,j \in I}.
\]
Since $M$ is symmetric, $M \geq 0$ holds if and only if $\det M_I \geq 0$ for all $I \subset \{1,\ldots,n\}$. Moreover, since the term $(1+\lambda_i^2)^{-1}$ appears precisely in the $(i,k)$-entries or $(k,i)$-entries of $M$ for $k=1,\ldots,n$, we see that
\[
\prod_{i \in I} (1+\lambda_i^2)^2 \cdot \det M_I=\det \tilde{M}_I,
\]
where $\tilde{M}_I$ denotes the principal submatrix of the $n \times n$ symmetric matrix $\tilde{M}:=(T+\d_{ij}\lambda_i)$. By the symmetry of $f$, we may only consider the point $\lambda \in \cS$ with $\lambda_1 \geq \ldots \geq \lambda_n$. A standard induction argument shows that
\[
\det \tilde{M}_I=\prod_{i \in I} \lambda_i \cdot \bigg(1+\sum_{i \in I} \frac{T}{\lambda_i} \bigg)
\]
as long as $\lambda_n \neq 0$. Set $x_i:=\arctan \lambda_i$, $y_i:=\hpi-x_i$ ($i=1,\ldots,n$). The argument is divided into two cases;

\vspace{2mm}

\noindent
{\bf Case 1 ($\theta(\lambda) \geq (n-1)\hpi$)}; In this case, we know that $T \leq 0$ and $\lambda_i>0$ for all $i=1,\ldots,n$. So the condition $\det \tilde{M}_I \geq 0$ is equivalent to
\[
1+\sum_{i \in I} \frac{T}{\lambda_i} \geq 0.
\]
This is clearly true for all $I$ when $T=0$, so we assume $T<0$, or equivalently $\theta(\lambda)>(n-1)\hpi$. Since $\sum_{i \in I} \frac{T}{\lambda_i} \geq \sum_{i=1}^n \frac{T}{\lambda_i}$, we may only consider the case $I=\{1,\ldots,n \}$. From the assumption, we observe that
\[
0<\sum_{i=1}^n y_i<\hpi, \quad 0<y_1 \leq \ldots \leq y_n<\hpi.
\]
So combining with the formula
\[
\tan \big(\sum_{i=1}^n y_i \big)=\frac{\tan(\sum_{i=1}^{n-1} y_i)+\tan y_n}{1-\tan(\sum_{i=1}^{n-1} y_i)\tan y_n},
\]
we know that $0<1-\tan \big(\sum_{i=1}^{n-1} y_i \big)\tan y_n<1$, and hence
\[
\tan \big(\sum_{i=1}^n y_i \big) \geq \tan \big(\sum_{i=1}^{n-1} y_i \big)+\tan y_n.
\]
Repeating this, we obtain
\[
-\frac{1}{T}=\tan \big( \sum_{i=1}^n y_i \big) \geq \sum_{i=1}^n \tan y_i=\sum_{i=1}^n \frac{1}{\lambda_i}.
\]

\vspace{2mm}

\noindent
{\bf Case 2 ($\hat{\Theta}-\hpi<\theta(\lambda)<(n-1)\hpi$)}; In this case, we have $T>0$. If $\lambda_n \geq 0$, we know that $\tilde{M} \geq 0$ since it is decomposed as
\[
\tilde{M}=T\cdot E_n+{\rm diag}(\lambda_1,\ldots,\lambda_n),
\]
where $E_n$ denotes the $n \times n$ matrix whose all entries are equal to one. Now we assume that $\lambda_n<0$. We note that $\lambda_i>0$ for $i=1,\ldots,n-1$. If $n \notin I=\{i_1,\ldots, i_\ell \}$, we have a decomposition $\tilde{M}_I=T \cdot E_\ell+{\rm diag}(\lambda_{i_1},\ldots,\lambda_{i_\ell})$ with $\lambda_{i_1}, \ldots \lambda_{i_\ell}>0$, so $\det \tilde{M}_I \geq 0$ is clear. If $n \in I$, we have $\sum_{i \in I} \frac{T}{\lambda_i} \leq \sum_{i=1}^n \frac{T}{\lambda_i}$. Eventually we may assume $I=\{1,\ldots,n \}$. Then the condition $\det \tilde{M}_I \geq 0$ is equivalent to
\[
1+\sum_{i \in I} \frac{T}{\lambda_i} \leq 0.
\]
From the assumption, we have
\[
0<\sum_{i=1}^{n-1} y_i<\hpi<\sum_{i=1}^n y_i<\pi, \quad 0<y_1 \leq \ldots \leq y_{n-1}<\hpi<y_n<\pi.
\]
So from the formula
\[
\tan \big(\sum_{i=1}^n y_i \big)=\frac{\tan(\sum_{i=1}^{n-1} y_i)+\tan y_n}{1-\tan(\sum_{i=1}^{n-1} y_i)\tan y_n},
\]
we know that $1<1-\tan(\sum_{i=1}^{n-1} y_i)\tan y_n$, $\tan(\sum_{i=1}^{n-1} y_i)+\tan y_n<0$ and hence we obtain
\[
\tan \big(\sum_{i=1}^n y_i \big) \geq \tan \big( \sum_{i=1}^{n-1} y_i \big)+\tan y_n.
\]
Applying the same argument as in Case 1 to the first term, we obtain $\tan(\sum_{i=1}^{n-1} y_i) \geq  \sum_{i=1}^{n-1} \tan y_i$. Thus
\[
-\frac{1}{T}=\tan \big( \sum_{i=1}^n y_i \big) \geq \sum_{i=1}^n \tan y_i=\sum_{i=1}^n \frac{1}{\lambda_i}.
\]
This completes the proof.
\end{proof}
\begin{rk}
When $\hat{\Theta}<(n-1)\hpi$ the function $f$ is no longer concave since its level set is no longer convex (\cf \cite{Yua05}).
\end{rk}
\section{The tangent Lagrangian phase flow} \label{tlpf}
\subsection{Monotonicity formulas}
We start with some monotonicity properties of functionals along the TLPF defined in Section \ref{fnd}.
\begin{prop} \label{eff}
Along the TLPF $\phi_t$ with $\phi_0 \in \cH$ we have
\begin{enumerate}
\item $\cC(\phi_t)$ is constant.
\item $\cJ(\phi_t)$ is monotonically decreasing.
\item $\cV(\phi_t)$ is monotonically decreasing.
\end{enumerate}
\end{prop}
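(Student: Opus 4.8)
The plan is to differentiate each of the three functionals along the flow, substitute the flow equation $\ddt\phi_t=F(A[\phi_t])=\tan(\Theta(A[\phi_t])-\hat{\Theta})$, and simplify using the variational formulas recorded in Section~\ref{ntiform}. Throughout one works on the maximal interval $[0,T)$ on which the TLPF exists; since the right-hand side of \eqref{TLPF} is finite only when $\phi_t\in\cH$, one has $\phi_t\in\cH$ for every $t\in[0,T)$, hence $\cos(\Theta(A[\phi_t])-\hat{\Theta})>0$, and $v_{\phi_t}\geq 1$. Two elementary identities do all the work: the trigonometric identity $\tan(\Theta-\hat{\Theta})\cos(\Theta-\hat{\Theta})=\sin(\Theta-\hat{\Theta})$, and, viewing $\dot\phi_t$ as a function on $X$, the pointwise identity $d\dot\phi_t=(1+\tan^2(\Theta(A[\phi_t])-\hat{\Theta}))\,d\Theta(A[\phi_t])=(1+F(A[\phi_t])^2)\,d\Theta(A[\phi_t])$.

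For (1): by the variational formula for $\cC$ and $\Re\big(e^{-\sqrt{-1}\hat{\Theta}}(\a+\sqrt{-1}\chi_{\phi_t})^n\big)=v_{\phi_t}\cos(\Theta(A[\phi_t])-\hat{\Theta})\,\a^n$,
\[
\ddt\cC(\phi_t)=\int_X\dot\phi_t\,v_{\phi_t}\cos(\Theta(A[\phi_t])-\hat{\Theta})\,\a^n=\int_X v_{\phi_t}\sin(\Theta(A[\phi_t])-\hat{\Theta})\,\a^n,
\]
and the last integral vanishes for every $\phi$ (in particular for every $\phi_t$): since $Z$ is a topological invariant with $Ze^{-\sqrt{-1}\hat{\Theta}}\in\R_{>0}$, one has $0=\Im\big(e^{-\sqrt{-1}\hat{\Theta}}Z\big)=\int_X v_\phi\sin(\Theta(A[\phi])-\hat{\Theta})\,\a^n$. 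Hence $\cC(\phi_t)$ is constant. For (2): by the variational formula for $\cJ$ and $\Im\big(e^{-\sqrt{-1}\hat{\Theta}}(\a+\sqrt{-1}\chi_{\phi_t})^n\big)=v_{\phi_t}\sin(\Theta(A[\phi_t])-\hat{\Theta})\,\a^n$,
\[
\ddt\cJ(\phi_t)=-\int_X\dot\phi_t\,v_{\phi_t}\sin(\Theta(A[\phi_t])-\hat{\Theta})\,\a^n=-\int_X v_{\phi_t}\,\frac{\sin^2(\Theta(A[\phi_t])-\hat{\Theta})}{\cos(\Theta(A[\phi_t])-\hat{\Theta})}\,\a^n\leq 0,
\]
because $v_{\phi_t}\geq 1$ and $\cos(\Theta(A[\phi_t])-\hat{\Theta})>0$ on $\cH$. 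So $\cJ(\phi_t)$ is monotonically decreasing, strictly so whenever $\phi_t$ is not a dHYM metric; this is also what the interpretation of the TLPF as the gradient flow of $\cJ$ predicts.

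For (3) I would differentiate $\cV(\phi_t)=\int_X v_{\phi_t}\,\a^n$ directly, following the proof of \cite[Proposition~3.4]{JY17} for the LBMCF. From $\dot v_{\phi_t}=v_{\phi_t}\sum_i\frac{\lambda_i}{1+\lambda_i^2}\dot\lambda_i$, with $\dot\lambda_i$ read off from the flow equation by differentiating $A[\phi_t]$, an integration by parts (the step in which the Hermitian metric $\eta$ enters) produces the variational formula for $\cV$; substituting the pointwise identity $d\dot\phi_t=(1+F(A[\phi_t])^2)\,d\Theta(A[\phi_t])$ then collapses the resulting pairing to a square, giving
\[
\ddt\cV(\phi_t)=-\int_X(1+F(A[\phi_t])^2)\,|d\Theta(A[\phi_t])|_\eta^2\,v_{\phi_t}\,\a^n\leq 0.
\]
Hence $\cV(\phi_t)$ is monotonically decreasing. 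This is the TLPF counterpart of the fact that the LBMCF, being the downward gradient flow of $\cV$, decreases the volume; the sole new feature is the harmless positive factor $1+F(A[\phi_t])^2$.

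I expect no genuine analytic obstacle: everything reduces, via the variational formulas already in Section~\ref{ntiform}, to the two pointwise identities above and the global identity $\int_X v_\phi\sin(\Theta(A[\phi])-\hat{\Theta})\,\a^n=0$. The points needing a little care are: (a) the sign bookkeeping in (3), \ie performing the integration by parts with the metric $\eta$ exactly as in \cite{JY17} so that the final integrand is manifestly nonpositive; and (b) the placement of this proposition before the long-time existence of the TLPF is established, which causes no circularity, since the formulas are only asserted on the maximal existence interval (on which $\phi_t\in\cH$, hence $\cos(\Theta(A[\phi_t])-\hat{\Theta})>0$, holds automatically) and it is exactly these formulas that are fed into the long-time existence argument afterwards.
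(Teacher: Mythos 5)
Your proposal is correct and follows essentially the same route as the paper: differentiate each functional, substitute $\dot\phi_t=\tan(\Theta(A[\phi_t])-\hat{\Theta})$ into the variational formulas, use $\int_X v\sin(\Theta(A[\phi])-\hat{\Theta})\,\a^n=0$ for (1), positivity of $\Re\big(e^{-\sqrt{-1}\hat{\Theta}}(\a+\sqrt{-1}\chi_{\phi_t})^n\big)$ on $\cH$ for (2) (your $v\sin^2/\cos$ integrand is literally the paper's $\tan^2\cdot v\cos$), and $d\dot\phi_t=(1+F^2)\,d\Theta$ for (3). The only cosmetic difference is that you justify $\phi_t\in\cH$ by the domain of definition of the flow, whereas the paper forward-references the maximum-principle Lemma \ref{sff}; both are adequate here.
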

\begin{proof}
From the variational formula of $\cC$ and $\cJ$, we compute
\begin{eqnarray*}
\ddt \cC(\phi_t) &=& \int_X \ddt \phi \cdot \Re \big( e^{-\sqrt{-1}\hat{\Theta}}(\a+\sqrt{-1}\chi_\phi)^n \big)\\
&=& \int_X \tan(\Theta(A[\phi])-\hat{\Theta}) \Re \big( e^{-\sqrt{-1}\hat{\Theta}}(\a+\sqrt{-1}\chi_\phi)^n \big)\\
&=& \int_X v \sin (\Theta(A[\phi])-\hat{\Theta}) \a^n \\
&=& 0,
\end{eqnarray*}
\begin{eqnarray*}
\ddt \cJ(\phi_t) &=& -\int_X \ddt \phi \cdot \Im \big( e^{-\sqrt{-1}\hat{\Theta}}(\a+\sqrt{-1}\chi_\phi)^n \big)\\
&=& -\int_X \tan(\Theta(A[\phi])-\hat{\Theta}) \Im \big( e^{-\sqrt{-1}\hat{\Theta}}(\a+\sqrt{-1}\chi_\phi)^n \big)\\
&=& -\int_X \tan^2(\Theta(A[\phi])-\hat{\Theta}) \Re \big( e^{-\sqrt{-1}\hat{\Theta}}(\a+\sqrt{-1}\chi_\phi)^n \big)\\
&\leq& 0
\end{eqnarray*}
since $\phi_t$ stays in the set $\cH$ as long as it exists (\cf Lemma \ref{sff}) and hence the form $\Re \big( e^{-\sqrt{-1}\hat{\Theta}}(\a+\sqrt{-1}\chi_{\phi_t})^n \big)$ defines a positive measure on $X$. Also we have
\[
\ddt \cV(\phi_t) =-\int_X \langle d \Theta (A[\phi]), d \ddt \phi \rangle_\eta v_\phi \a^n=-\int_X (1+F^2)|d \Theta (A[\phi])|_\eta^2 v_\phi \a^n \leq 0.
\]
\end{proof}
\begin{rk} \label{GITint}
In \cite[Section 2]{CY18}, they discovered a GIT/moment map interpretation for dHYM metrics in which the $\cJ$-functional plays a role of the Kempf-Ness functional, and dHYM metrics are characterized as critical points of $\cJ$. Also the space $\cH$ has a natural Riemannian structure defined by
\[
\|\d \phi\|_{\phi}^2:=\int_X (\d \phi)^2 \Re \big( e^{-\sqrt{-1}\hat{\Theta}}(\a+\sqrt{-1}\chi_\phi)^n \big), \quad \d \phi \in T_\phi \cH.
\]
Moreover, the Riemannian manifold $\cH$ is equipped with the Levi-Civita connection, and the sectional curvature is non-positive as shown in the recent work \cite{CCL20}. The $\cJ$-functional is convex along geodesics with respect to this Riemannian structure. From the proof of Proposition \ref{eff}, we know that the TLPF defines the gradient flow of the $\cJ$-functional.
\end{rk}
By Lemma \ref{sff}, Proposition \ref{eff} (1) and Proposition \ref{har} we obtain;
\begin{cor} \label{harff}
Assume $\hat{\Theta} \in ((n-1)\hpi,n \hpi)$. Then along the TLPF $\phi_t$, the Harnack type inequality
\[
\sup_X \phi_t \leq -C \inf_X \phi_t+C'
\]
holds for some uniform constant $C, C'>0$ depending only on $\a$, $\hat{\chi}$, $\hat{\Theta}$ and the initial data $\phi_0 \in \cH$.
\end{cor}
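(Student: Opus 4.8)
The plan is to apply the Harnack inequality of Proposition~\ref{har} to each time slice $\phi=\phi_t$ and then to check that the constants it produces — which a priori depend on $\inf_X\Theta(A[\phi_t])$ and $\cC(\phi_t)$ — can be chosen independent of $t$, using the monotonicity properties of Proposition~\ref{eff} and Lemma~\ref{sff}.

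First, Lemma~\ref{sff} guarantees that the TLPF stays in the space of almost calibrated potentials, $\phi_t\in\cH$, for as long as it is defined, so Proposition~\ref{har} applies to every $\phi_t$ and gives
\[
\sup_X\phi_t\le -C_t\inf_X\phi_t+C_t',
\]
with $C_t,C_t'>0$ depending only on $\a$, $\hat\chi$, $\hat\Theta$, $\inf_X\Theta(A[\phi_t])$ and $\cC(\phi_t)$. The two $t$-dependent inputs are then removed as follows. By Proposition~\ref{eff}~(1), $\cC$ is constant along the flow, so $\cC(\phi_t)=\cC(\phi_0)$ is fixed by the initial data. For the phase, Lemma~\ref{sff} also records (via the maximum principle for the parabolic equation satisfied by $\Theta(A[\phi_t])-\hat\Theta$) that $\inf_X\Theta(A[\phi_t])$ is non-decreasing in $t$; hence $\inf_X\Theta(A[\phi_t])\ge\inf_X\Theta(A[\phi_0])>\hat\Theta-\hpi$, while $\Theta(A[\phi_t])<n\hpi$ automatically.

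It then remains to trace through the proof of Proposition~\ref{har} and observe that the phase enters only through the constant $c>0$, a lower bound for $v_{\phi_t}\cos(\Theta(A[\phi_t])-\hat\Theta)\ge\cos(\Theta(A[\phi_t])-\hat\Theta)$. Since $\cos$ is bounded below by a positive constant on the compact subinterval $[\,\inf_X\Theta(A[\phi_0])-\hat\Theta,\ n\hpi-\hat\Theta\,]\subset(-\hpi,\hpi)$ determined by $\phi_0$, the constant $c$, and with it $C_1=c^{-1}|Z|-[\a]^n$ and the resulting Harnack constants, may be taken uniform in $t$, depending only on $\a$, $\hat\chi$, $\hat\Theta$ and $\phi_0$. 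Putting these observations together yields the claim. The corollary is essentially a bookkeeping combination of results already in hand; the only point genuinely requiring care — the \emph{main obstacle}, such as it is — is the uniformity in $t$, namely that the flow stays uniformly away from the boundary component $\{\Theta-\hat\Theta=-\hpi\}$ of $\cH$, and this is precisely what the monotonicity of $\inf_X\Theta(A[\phi_t])$ supplies.
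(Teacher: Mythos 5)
Your proposal is correct and follows exactly the paper's (unwritten but indicated) argument: the paper derives Corollary \ref{harff} precisely by combining Lemma \ref{sff} (the flow stays in $\cH$ with $\inf_X\Theta(A[\phi_t])\geq\inf_X\Theta(A[\phi_0])$), Proposition \ref{eff}\,(1) ($\cC(\phi_t)$ constant), and Proposition \ref{har}. Your additional tracing of how the phase bound enters only through the positive lower bound $c$ on $v\cos(\Theta-\hat\Theta)$ is accurate bookkeeping, not a deviation.
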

\begin{rk} \label{cff}
In the proof of Proposition \ref{har}, we see that the normalization $\tilde{\phi}=\phi-|Z|^{-1}\cC(\phi)$ is a significant issue. As for the LBMCF \eqref{LBMCF}, the $\cC(\phi_t)$ is not constant or even monotone. Instead, one can easily show that if $\xi<\inf_X \hat{\Theta}(A[\phi_0]) \leq \sup_X \Theta(A[\phi_0])<\xi+\hpi$ for some $\xi \in \R$, then the $\xi$-twisted $\cC$-functional
\[
\cC_\xi(\phi):=\Re \big( e^{-\sqrt{-1}\xi} CY_\C(\phi) \big)
\]
is decreasing along the flow by applying Jensen's inequality to $\tan(x-\xi)$ for $x \in (\xi,\xi+\hpi)$ (see \cite[Proposition 2.1]{Tak19}). However this argument does not apply when $\osc_X \Theta(A[\phi_0]) \geq \hpi$.
\end{rk}
\subsection{Long time existence}
Now let us consider the TLPF $\phi_t$ with $\phi_0 \in \cH$ for $t \in [0,T)$ (where $T>0$ is not necessarily the maximal existence time).
\begin{lem} \label{sff}
Along the TLPF $\phi_t$ with $\phi_0 \in \cH$, we have a uniform control
\[
\inf_X \Theta(A[\phi_0]) \leq \Theta(A[\phi_t]) \leq \sup_X \Theta(A[\phi_0]).
\]
So $\|\ddt \phi \|_{C^0} \leq C$ for some constant $C>0$ depending only on $\hat{\Theta}$ and $\phi_0$. In particular, the flow stays in $\cH$ as long as it exists.
\end{lem}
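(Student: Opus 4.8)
The plan is to control $\Theta(A[\phi_t])$ by a parabolic maximum principle applied to the time-derivative of the flow. Set $u:=\ddt\phi=F(A[\phi_t])=\tan(\Theta(A[\phi_t])-\hat{\Theta})$, and first work on a time interval $[0,T)$ on which the flow exists and $\phi_t\in\cH$; such a $T>0$ exists since $\cH$ is open and short-time existence holds. Differentiating $\ddt\phi=F(A[\phi_t])$ in $t$, and using that on the fixed K\"ahler background $(X,\a)$ the endomorphism $\p_t A[\phi_t]$ is, in an $\a$-orthonormal frame, the complex Hessian $(\nabla_i\nabla_{\bar j}(\ddt\phi))$ of $\ddt\phi$ with respect to $\a$, one obtains, in the $\a$-orthonormal eigenbasis of $A[\phi_t]$ at each point,
\[
\ddt u=F^{ij}(A[\phi_t])\,\nabla_i\nabla_{\bar j}u ,
\]
where by \eqref{propfF} the coefficients are $F^{ij}=\d_{ij}f_i$ with $f_i=(1+f^2)/(1+\lambda_i^2)>0$. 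Thus $u$ solves a linear parabolic equation whose second-order coefficients are smooth, bounded on $X\times[0,T']$ for each $T'<T$, and positive definite, and which has \emph{no} zeroth-order term. (Equivalently, $w:=\Theta(A[\phi_t])$ satisfies $\ddt w=F^{ij}\nabla_i\nabla_{\bar j}w+2F\,F^{ij}(\nabla_i w)(\nabla_{\bar j}w)$, again with no zeroth-order term, which gives the bound on $\Theta$ directly.)

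Next I would invoke the weak parabolic maximum principle on $X\times[0,T']$ for each $T'<T$: at an interior spatial maximum of $u$ the complex Hessian $\nabla_i\nabla_{\bar j}u$ is negative semidefinite while $F^{ij}\ge0$, so $\ddt u\le0$ there, and symmetrically at a minimum; hence $\sup_X u(\cdot,t)$ is non-increasing and $\inf_X u(\cdot,t)$ is non-decreasing, so $u(\cdot,t)$ stays in $[\inf_X u(\cdot,0),\sup_X u(\cdot,0)]$. Since $\phi_0\in\cH$, the function $\Theta(A[\phi_0])-\hat{\Theta}$ takes values in $(-\hpi,\hpi)$, where $\tan$ is an increasing homeomorphism; therefore $\inf_X u(\cdot,0)=\tan(\inf_X\Theta(A[\phi_0])-\hat{\Theta})$ and $\sup_X u(\cdot,0)=\tan(\sup_X\Theta(A[\phi_0])-\hat{\Theta})$. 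Applying $\arctan$ and adding $\hat{\Theta}$ yields the asserted two-sided bound $\inf_X\Theta(A[\phi_0])\le\Theta(A[\phi_t])\le\sup_X\Theta(A[\phi_0])$; in particular $\Theta(A[\phi_t])-\hat{\Theta}$ stays in a fixed compact subinterval of $(-\hpi,\hpi)$, so $\|\ddt\phi\|_{C^0}=\|u\|_{C^0}\le\max\{|\tan(\inf_X\Theta(A[\phi_0])-\hat{\Theta})|,\,|\tan(\sup_X\Theta(A[\phi_0])-\hat{\Theta})|\}$, which depends only on $\hat{\Theta}$ and $\phi_0$.

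Finally, to remove the provisional restriction to intervals on which $\phi_t\in\cH$, I would run the standard openness argument: let $T_\ast$ be the supremum of all $t$ in the maximal existence interval for which $\phi_s\in\cH$ for every $s\le t$; on $[0,T_\ast)$ the estimates above apply and keep $\Theta(A[\phi_t])-\hat{\Theta}$ inside a fixed compact subinterval of $(-\hpi,\hpi)$, so by continuity $\phi_{T_\ast}\in\cH$ as well, forcing $T_\ast$ to coincide with the maximal existence time. Hence the flow remains in $\cH$ for as long as it exists, and the bounds hold throughout. The only step requiring genuine care is the derivation of the evolution equation for $u$ — correctly identifying $\p_t A[\phi_t]$ with the complex Hessian of $\ddt\phi$ and passing to the pointwise eigenbasis of $A[\phi_t]$, in the usual way (cf.\ \cite{Sze18}); the maximum principle and the continuity bootstrap are routine, so I do not anticipate a serious obstacle.
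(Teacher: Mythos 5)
Your proposal is correct and follows essentially the same route as the paper: derive the evolution equation $\ddt F(A[\phi_t])=F^{i\bar j}\nabla_i\nabla_{\bar j}F(A[\phi_t])$, apply the parabolic maximum principle to $F$, and use the monotonicity of $\tan$ on $(-\hpi,\hpi)$ to transfer the bounds to $\Theta(A[\phi_t])$ and conclude the flow stays in $\cH$. The extra care you take with the continuity/openness bootstrap is sound but is exactly what the paper implicitly relies on.
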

\begin{proof}
A straightforward computation shows that
\[
\ddt F(A[\phi_t])=F^{i\bar{j}} \p_i \p_{\bar{j}} \ddt \phi=F^{i\bar{j}} \p_i \p_{\bar{j}}(F(A[\phi_t])).
\]
So by a maximum principle we have $\inf_X F(A[\phi_0]) \leq F(A[\phi_t]) \leq \sup_X F(A[\phi_0])$, which shows that $\phi_t \in \cH$ as long as it exists. So by the monotonicity of $\tan \colon (-\hpi,\hpi) \to \R$, we have $\inf_X \Theta(A[\phi_0]) \leq \Theta(A[\phi_t]) \leq \sup_X \Theta(A[\phi_0])$ as desired.
\end{proof}
Integrating $\ddt \phi$ on $[0,T)$ we obtain;
\begin{cor}
Along the TLPF $\phi_t$ with $\phi_0 \in \cH$, we have $\|\phi_t\|_{C^0} \leq C_T$ for some constant $C_T>0$ depending only on $\hat{\Theta}$, $\phi_0$ and $T$.
\end{cor}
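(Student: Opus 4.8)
The statement is an immediate consequence of Lemma \ref{sff} together with integration in the time variable, exactly as the sentence preceding it suggests. The plan is as follows.

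First I would invoke the speed bound from Lemma \ref{sff}. Since $\phi_0 \in \cH$, the numbers $\inf_X \Theta(A[\phi_0])$ and $\sup_X \Theta(A[\phi_0])$ both lie strictly inside the interval $(\hat{\Theta}-\hpi, \hat{\Theta}+\hpi)$, and along the flow we have $\inf_X \Theta(A[\phi_0]) \leq \Theta(A[\phi_t]) \leq \sup_X \Theta(A[\phi_0])$ for all $t \in [0,T)$. Hence $\tan(\Theta(A[\phi_t]) - \hat{\Theta})$ stays in a fixed compact subset of $\R$, giving
\[
\Big\| \ddt \phi_t \Big\|_{C^0} = \big\| \tan(\Theta(A[\phi_t]) - \hat{\Theta}) \big\|_{C^0} \leq C
\]
for all $t \in [0,T)$, where $C$ depends only on $\hat{\Theta}$ and $\phi_0$ (and not on $t$ or $T$).

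Next, for each $x \in X$ and $t \in [0,T)$, the fundamental theorem of calculus applied to $s \mapsto \phi_s(x)$ gives $\phi_t(x) = \phi_0(x) + \int_0^t \ddt \phi_s(x)\, ds$, whence
\[
|\phi_t(x)| \leq \|\phi_0\|_{C^0} + \int_0^t \Big| \ddt \phi_s(x) \Big|\, ds \leq \|\phi_0\|_{C^0} + CT.
\]
Taking the supremum over $x \in X$ and setting $C_T := \|\phi_0\|_{C^0} + CT$ finishes the proof; $C_T$ depends only on $\hat{\Theta}$, $\phi_0$, and $T$, as required.

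There is no genuine obstacle here: the only nontrivial ingredient is the $C^0$ bound on the flow speed, which is already established in Lemma \ref{sff} via the maximum principle for the parabolic equation satisfied by $F(A[\phi_t])$, and everything else is elementary. I would only remark that the bound $C_T$ necessarily degenerates as $T \to \infty$; upgrading it to a time-independent $C^0$ estimate --- which is what one needs for the convergence statement in Theorem \ref{convf} (2) --- requires the Harnack-type inequality of Corollary \ref{harff} and the normalization by the $\cC$-functional, and lies outside the scope of this Corollary.
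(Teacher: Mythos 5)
Your proof is correct and is exactly the argument the paper intends: the Corollary is stated immediately after the sentence ``Integrating $\ddt \phi$ on $[0,T)$ we obtain,'' so the paper's own (implicit) proof is precisely the combination of the uniform speed bound $\|\ddt \phi\|_{C^0} \leq C$ from Lemma \ref{sff} with the fundamental theorem of calculus in $t$. Your closing remark about the $T$-dependence and the role of Corollary \ref{harff} for the time-independent estimate is also accurate.
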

Also, combining with Proposition \ref{bps} we have;
\begin{cor} \label{unifev}
Along the TLPF $\phi_t$ with $\phi_0 \in \cH$, there is a uniform constant $C>0$ depending only on $\phi_0$ such that $|\lambda_n|<C$.
\end{cor}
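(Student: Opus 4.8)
The plan is to read this off from the a priori phase bound of Lemma~\ref{sff} together with the eigenvalue estimate in Proposition~\ref{bps}~(5); no new ideas are needed, so this is essentially a bookkeeping argument.

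First I would fix the relevant constant purely in terms of the initial data. Since $\phi_0 \in \cH$ and $\hat{\Theta} > (n-1)\hpi$ in the present setting, we have $\Theta(A[\phi_0]) > \hat{\Theta} - \hpi > (n-2)\hpi$ at every point of $X$; as $X$ is compact, the continuous function $\Theta(A[\phi_0])$ attains its infimum, so
\[
\b := \inf_X \Theta(A[\phi_0]) - (n-2)\hpi
\]
is a strictly positive constant depending only on $\phi_0$ (and the fixed background data $\a,\hat{\chi}$). By Lemma~\ref{sff}, along the TLPF one has $\Theta(A[\phi_t]) \geq \inf_X \Theta(A[\phi_0]) = (n-2)\hpi + \b$ pointwise on $X$ for as long as the flow exists, while the upper bound $\Theta(A[\phi_t]) < n\hpi$ is automatic since $\arctan(\cdot) \in (-\hpi,\hpi)$.

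Next I would apply Proposition~\ref{bps} pointwise. Fix a point $x \in X$ and a time $t$, and let $\lambda = \lambda[\phi_t](x)$ denote the eigenvalues of $A[\phi_t]$ at $x$, arranged in decreasing order. The previous step gives $\theta(\lambda) = \Theta(A[\phi_t](x)) \in \big[(n-2)\hpi + \b,\ n\hpi\big)$, so Proposition~\ref{bps}~(5) applies with $\s = \theta(\lambda)$ and yields $|\lambda_n| < C(\b)$. The point is that the constant $C(\b)$ furnished by that proposition is uniform over all admissible values $\s \geq (n-2)\hpi + \b$ once $\b$ is fixed; hence $C := C(\b)$ is a single constant, depending only on $\phi_0$, that bounds $|\lambda_n|$ at every point and every time along the flow.

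I do not expect any genuine obstacle here: the statement is a formal consequence of Lemma~\ref{sff} and Proposition~\ref{bps}~(5). The only things to verify are that the threshold $\b$ is bounded away from $0$ by a quantity determined by the initial data alone --- immediate from the strict inclusion $\phi_0 \in \cH$ together with $\hat{\Theta} > (n-1)\hpi$ --- and that the constant $C(\b)$ in Proposition~\ref{bps}~(5) does not deteriorate as $\s$ ranges over $[(n-2)\hpi + \b, n\hpi)$, which is exactly how that proposition is phrased.
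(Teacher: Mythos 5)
Your argument is correct and is exactly the paper's own (the paper states the corollary as an immediate consequence of Lemma \ref{sff} combined with Proposition \ref{bps}, which is precisely the bookkeeping you carry out). The only point worth noting is that the positivity of your $\b$ does rely on the standing assumption $\hat{\Theta}\in((n-1)\hpi,n\hpi)$ of that section, which you correctly invoke.
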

Set
\[
\cF(\lambda[\phi]):=\sum_{i=1}^n f_i(\lambda[\phi]).
\]
Then Lemma \ref{sff} further implies;
\begin{cor} \label{lbffc}
For the TLPF $\phi_t$ with $\phi_0 \in \cH$, there is a constant $C>0$ depending only on $\hat{\Theta}$ and $\phi_0$ such that
\begin{equation}
\frac{1}{C}<\cF(\lambda[\phi_t])<C.
\end{equation}
\end{cor}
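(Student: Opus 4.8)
The plan is to unwind the definition of $\cF$ and then read off both bounds directly from the flow estimates already in hand. Recall that $f_i=\frac{1+f^2}{1+\lambda_i^2}$ with $f=\tan(\Theta(A[\phi_t])-\hat{\Theta})$, so that
\[
\cF(\lambda[\phi_t])=(1+f^2)\sum_{i=1}^n\frac{1}{1+\lambda_i^2}.
\]
I would control the two factors on the right separately. For the scalar factor $1+f^2$: by Lemma \ref{sff} the Lagrangian phase stays in the fixed compact interval $[\inf_X\Theta(A[\phi_0]),\sup_X\Theta(A[\phi_0])]$, which is contained in $(\hat{\Theta}-\hpi,\hat{\Theta}+\hpi)$ precisely because $\phi_0\in\cH$; hence $|f|=|\tan(\Theta(A[\phi_t])-\hat{\Theta})|$ is bounded above by a constant depending only on $\hat{\Theta}$ and $\phi_0$, while trivially $1+f^2\geq 1$.

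For the upper bound on $\cF$ it then suffices to note the elementary estimate $0<\sum_{i=1}^n(1+\lambda_i^2)^{-1}\leq n$, which gives $\cF(\lambda[\phi_t])\leq (1+\sup f^2)\,n$. The lower bound is the only place where a moment's care is needed, since the large eigenvalues $\lambda_1,\dots,\lambda_{n-1}$ are not controlled along the flow and their reciprocal-square terms may be arbitrarily small. The point is that one does not need them: by Corollary \ref{unifev} the smallest eigenvalue satisfies $|\lambda_n|<C$ with $C$ depending only on $\phi_0$, so retaining only the last term of the sum,
\[
\sum_{i=1}^n\frac{1}{1+\lambda_i^2}\geq\frac{1}{1+\lambda_n^2}\geq\frac{1}{1+C^2},
\]
and combining this with $1+f^2\geq 1$ yields $\cF(\lambda[\phi_t])\geq (1+C^2)^{-1}$.

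In short, I do not anticipate a genuine obstacle here: the statement falls out as an immediate consequence of Lemma \ref{sff} (which pins the phase, hence $f$, into a compact set) and Corollary \ref{unifev} (which bounds $|\lambda_n|$). The only observation worth flagging in the write-up is that a single well-placed term of the sum $\sum_i(1+\lambda_i^2)^{-1}$ already suffices for the lower estimate, so no uniform control of the unbounded eigenvalues is required.
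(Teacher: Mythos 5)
Your proof is correct and follows essentially the same route as the paper: the same formula $\cF=(1+f^2)\sum_i(1+\lambda_i^2)^{-1}$, the bound on $f$ via Lemma \ref{sff} and $\phi_0\in\cH$, and the lower bound extracted from the single term $(1+\lambda_n^2)^{-1}$ using the uniform control of $|\lambda_n|$ from Corollary \ref{unifev}. Your write-up merely makes explicit the details that the paper's one-line proof leaves implicit.
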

\begin{proof}
We compute
\[
\cF(\lambda[\phi_t])=(1+f^2) \sum_{i=1}^n \frac{1}{1+\lambda_i^2}.
\]
The upper bound of $f$ follows from the assumption $\hat{\Theta}>(n-1)\hpi$. The lower bound of $f$ is uniformly controlled by $\inf_X \Theta(A[\phi_0])$ by Lemma \ref{sff}. Combining with the uniform control of $|\lambda_n|$ we obtain the desired estimate.
\end{proof}
Set
\[
\cL:=\ddt-F^{k\bar{k}} \nabla_k \nabla_{\bar{k}}.
\]
\begin{lem} \label{secdl}
Assume $\hat{\Theta} \in ((n-1)\hpi,n \hpi)$ and let $\phi_t$ ($t \in [0,T)$) be the TLPF with $\phi_0 \in \cH$. Then we have
\[
|\dd \phi_t|_\a \leq C_T,
\]
where the constant $C_T>0$ depends only on $\a$, $\hat{\chi}$, $\hat{\Theta}$, $\phi_0$ and $T$.
\end{lem}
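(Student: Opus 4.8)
The plan is to bound $\operatorname{tr}_{\a}\chi_{\phi_t}$ from above by a parabolic maximum principle argument and then read off the bound on $|\dd\phi_t|_\a$. First I would use Lemma~\ref{sff}: along the flow $\Theta(A[\phi_t])\geq\inf_X\Theta(A[\phi_0])>\hat\Theta-\hpi>(n-2)\hpi$, so by Proposition~\ref{bps}~(1) one has $\lambda_1\geq\cdots\geq\lambda_{n-1}>0$ and $\lambda_{n-1}\geq|\lambda_n|$, whence $\max_i|\lambda_i|=\lambda_1$; combined with Corollary~\ref{unifev} ($|\lambda_n|\leq C$) this gives
\[
\lambda_1\;\leq\;\operatorname{tr}_{\a}\chi_{\phi_t}+C\qquad\text{and}\qquad\operatorname{tr}_{\a}\chi_{\phi_t}\;\leq\;n\lambda_1.
\]
So it is enough to bound $G:=\operatorname{tr}_{\a}\chi_{\phi_t}=\operatorname{tr}_{\a}\hat\chi+\Delta_{\a}\phi_t$ from above (it is bounded below for free, e.g.\ $G>0$ by Proposition~\ref{bps}~(1), or by Lemma~\ref{gfs}), after which $|\dd\phi_t|_\a=|\chi_{\phi_t}-\hat\chi|_\a\leq C(1+G)$.

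Next I would compute $\cL G$. Because $\a$ is K\"ahler and $\p_t \chi_{\phi_t}=\dd(\p_t \phi_t)=\dd F(A[\phi_t])$, we have $\p_t G=\Delta_{\a}\big(F(A[\phi_t])\big)$. Differentiating $F(A[\phi_t])$ twice and commuting covariant derivatives on $(X,\a)$ — using the symmetry $\nabla_k(\dd\phi_t)_{p\bar q}=\nabla_p(\dd\phi_t)_{k\bar q}$, valid because $\a$ is K\"ahler, so that no $\nabla(\dd\phi_t)\ast\nabla(\dd\phi_t)$ terms appear — one gets $\Delta_{\a}\big((\dd\phi_t)_{p\bar q}\big)=\nabla_p\nabla_{\bar q}(\Delta_{\a}\phi_t)+\Rm(\a)\ast\dd\phi_t$. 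Hence the $\nabla\nabla G$ term produced in $\p_t G$ cancels the second-order term of $\cL$, and
\[
\cL G\;=\;\sum_m F^{p\bar q,r\bar s}\,\nabla_m(A[\phi_t])_{r\bar s}\,\nabla_{\bar m}(A[\phi_t])_{p\bar q}\;+\;(\text{curvature and }\hat\chi\text{ terms}).
\]
The first sum is $\leq 0$: by Theorem~\ref{tanconv}, $F$ is concave on $\{A:\lambda[A]\in\cS\}$ (recall also $\tfrac{f_i-f_j}{\lambda_i-\lambda_j}\leq0$, cf.\ \eqref{propsF}), and $\phi_t\in\cH$ keeps $\lambda[\phi_t]\in\cS$; the standard fact that concavity forces $F^{p\bar q,r\bar s}W_{r\bar s}\overline{W_{q\bar p}}\leq 0$ for every complex matrix $W$ then applies to $W=\nabla_m(A[\phi_t])$ for each $m$. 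For the remaining terms I would use that $\Rm(\a),\hat\chi,\nabla^2\hat\chi$ are fixed bounded tensors contracted against $F^{i\bar j}$, together with the two-sided bound on $\cF=\sum_i f_i$ from Corollary~\ref{lbffc} and the estimate $|\dd\phi_t|_\a\leq C(1+G)$ above; this gives $|(\text{curvature and }\hat\chi\text{ terms})|\leq C(1+G)$. The outcome is the differential inequality $\cL G\leq C(1+G)$, with $C$ depending only on $\a,\hat\chi,\hat\Theta,\phi_0$. (Although the operator is not uniformly elliptic — $F^{1\bar1}=f_1\to0$ as $\lambda_1\to\infty$ — the two-sided control of $\cF$ is precisely what keeps the lower-order terms bounded, mirroring the role played by concavity and the structure conditions in \cite{PT17,Sze18}.)

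With $\cL G\leq C(1+G)$ at hand, set $u:=e^{-Ct}(G+1)$; then $\cL u=e^{-Ct}\big(\cL G-C(1+G)\big)\leq 0$, so the parabolic maximum principle gives $\sup_{X\times[0,t]}u=\sup_X u(\cdot,0)=\sup_X\big(\operatorname{tr}_{\a}\chi_{\phi_0}+1\big)$. Hence $G(\cdot,t)\leq e^{Ct}\big(\sup_X\operatorname{tr}_{\a}\chi_{\phi_0}+1\big)-1=:C_T$ for all $t\in[0,T)$, and then $\lambda_1\leq\operatorname{tr}_{\a}\chi_{\phi_t}+C\leq C_T$ gives $\max_i|\lambda_i|\leq C_T$, i.e.\ $|\dd\phi_t|_\a\leq C_T$.

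I expect the main obstacle to be the computation of $\cL G$ in the second step: isolating the third-order term exactly in the form $\sum_m F^{p\bar q,r\bar s}\nabla_m A_{r\bar s}\nabla_{\bar m}A_{p\bar q}$ so that Theorem~\ref{tanconv} applies, and checking that every commutator/curvature term grows at most linearly in $G$ — which hinges on using $\a$ K\"ahler (for the symmetry of $\nabla\dd\phi_t$) and on Corollary~\ref{lbffc}. Everything else is a routine parabolic maximum principle. I would also emphasise that, unlike the elliptic $C^2$ estimates of \cite{CJY15,Sze18}, no $C$-subsolution enters: the inequality $\cL G\leq C(1+G)$ is only linear in $G$, and the finiteness of $[0,T)$ is what permits the dependence of $C_T$ on $T$.
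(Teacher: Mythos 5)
Your proposal is correct, but it takes a genuinely different route from the paper. The paper applies the maximum principle to $\log\lambda_1 - Dt$, using the perturbation trick (replacing $\lambda_1$ by $\tlambda_1=\lambda_1[A-B]$ for a small constant diagonal $B$) to cope with the fact that $\lambda_1$ is only continuous; at the maximum the third-order term $\frac{1}{\lambda_1}F^{l\bar k,s\bar r}\nabla_{\bar 1}\chi_{l\bar k}\nabla_1\chi_{s\bar r}$ is killed by Theorem \ref{tanconv}, the gradient term $F^{k\bar k}|\nabla_{\bar k}\tlambda_1|^2/\lambda_1^2$ vanishes because $\nabla\tilde G=0$, and the remaining terms are $\leq C_3$ by Corollary \ref{lbffc}, so taking $D=C_3+1$ forces $t_0=0$. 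You instead run the maximum principle on the smooth quantity $G=\Delta_\a\phi_t+\Tr_\a\hat\chi$, where the same two inputs (concavity of $F$ on $\{\lambda[A]\in\cS\}$ from Theorem \ref{tanconv} plus Lemma \ref{sff}, and the two-sided bound on $\cF$ from Corollary \ref{lbffc}) yield $\cL G\leq C(1+G)$ and hence an $e^{CT}$ bound; Proposition \ref{bps}~(1) and Corollary \ref{unifev} convert this into control of $\lambda_1$ exactly as you say, and your nonpositivity claim for $\sum_m F^{p\bar q,r\bar s}\nabla_mA_{r\bar s}\nabla_{\bar m}A_{p\bar q}$ is verified directly from \eqref{propsF} since both $f_{ir}$ and $\frac{f_i-f_j}{\lambda_i-\lambda_j}$ are nonpositive. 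Your route is cleaner for this particular finite-time statement (no perturbation technique, no gradient term to dispose of, and both proofs end up exponential in $T$ anyway); what the paper's $\log\tlambda_1$ computation buys is reusability, since the inequality $\cL\log\tlambda_1\leq C\cF+\frac{1}{\lambda_1}F^{l\bar k,s\bar r}\nabla_{\bar 1}\chi_{l\bar k}\nabla_1\chi_{s\bar r}+F^{k\bar k}|\nabla_{\bar k}\tlambda_1|^2/\lambda_1^2$ is quoted verbatim in the proof of Lemma \ref{secdoest}, where the $C$-subsolution and the test function $\log\tlambda_1+\Phi(|\nabla\phi|^2)+\Psi(\phi)$ require precisely that refined form.
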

\begin{proof}
Take $T'<T$ and let $\nabla$ be the Chern connection with respect to $\a$. The strategy is applying the maximum principle to the function
\[
G:=\log \lambda_1-Dt
\]
on $X \times [0,T']$. The constant $D>0$ is determined in later argument. Assume that the function $G$ attains its maximum on $X \times [0,T']$ at some $(x_0,t_0)$. We want to apply the operator $\cL$ to $G$. More precisely, since $\lambda_1$ may not be differentiable ($\lambda_1$ is only continuous on $X$), we use the perturbation technique as in \cite[Section 4]{Sze18}. We take a normal coordinates with respect to $\a$ centered at $x_0$ to identify $A[\phi(x_0,t_0)]$ as a matrix valued function on it which is diagonal at the origin with eigenvalues $\lambda_1 \geq \ldots \geq \lambda_n$, and then adjust $A$ by subtracting a small constant diagonal matrix $B={\rm diag}(B^{ii})$ with $0=B^{11}<B^{22}<\ldots<B^{nn}$. At the origin the matrix $\tilde{A}:=A-B$ has eigenvalues
\[
\tlambda_1=\lambda_1, \quad \tlambda_i=\lambda_i-B^{ii}<\tlambda_1 \;\; \text{($i>1$)}.
\]
These are distinct, and define smooth functions near the origin. Set
\[
\tilde{G}:=\log \tlambda_1-Dt.
\]
Then we have $\tilde{G}(x,t) \leq G(x,t)$ and $\tilde{G}$ achieves its maximum $\tilde{G}(x_0,t_0)=G(x_0,t_0)$ at $(x_0,t_0)$. It suffices to show that $\tlambda_1$ is bounded from above. We may assume $\lambda_1 \geq 1$ at the origin. We compute
\[
\cL \log \tlambda_1=\frac{1}{\lambda_1} \cL \tlambda_1+F^{k\bar{k}} \frac{|\nabla_{\bar{k}} \tlambda_1|^2}{\lambda_1^2},
\]
\[
\nabla_{\bar{k}} \tlambda_1=\nabla_{\bar{k}} \chi_{1\bar{1}}-\nabla_{\bar{k}} B^{11},
\]
\begin{eqnarray*}
\nabla_k \nabla_{\bar{k}} \tlambda_1&=&\nabla_k \nabla_{\bar{k}} \chi_{1\bar{1}}+\sum_{p>1} \frac{|\nabla_{\bar{k}} \chi_{1 \bar{p}}|^2+|\nabla_{\bar{k}}\chi_{p \bar{1}}|^2}{\lambda_1-\tlambda_p}\\
&+& \nabla_k \nabla_{\bar{k}} B^{11}-2 \Re \sum_{p>1} \frac{\nabla_k \chi_{p \bar{1}} \nabla_{\bar{k}} B^{1\bar{p}}+\nabla_k \chi_{1\bar{p}} \nabla_{\bar{k}} B^{p \bar{1}}}{\lambda_1-\tlambda_p}+\tlambda_1^{pq,rs} \nabla_k B^{pq} \nabla_{\bar{k}} B^{rs}
\end{eqnarray*}
(for instance, see \cite[equation (70)]{Sze18}). Evaluating this expression at the origin, and using that $B$ is constant we have
\[
\nabla_{\bar{k}} \tlambda_1=\nabla_{\bar{k}} \chi_{1\bar{1}},
\]
\[
\nabla_k \nabla_{\bar{k}} \tlambda_1= \nabla_k \nabla_{\bar{k}} \chi_{1\bar{1}}+\sum_{p>1} \frac{|\nabla_{\bar{k}} \chi_{1 \bar{p}}|^2+|\nabla_{\bar{k}}\chi_{p \bar{1}}|^2}{\lambda_1-\tlambda_p}.
\]
On the other hand, the evolution equation of the TLPF implies that
\[
\ddt \tlambda_1=\ddt \nabla_1 \nabla_{\bar{1}} \phi=F^{l \bar{k},s\bar{r}} \nabla_{\bar{1}} \chi_{l \bar{k}} \nabla_1 \chi_{s \bar{r}}+F^{k\bar{k}} \nabla_1 \nabla_{\bar{1}} \chi_{k \bar{k}}.
\]
Thus we compute $\cL \tlambda_1$ as
\begin{eqnarray*}
\cL \tlambda_1 &=& F^{k\bar{k}} \bigg( \nabla_1 \nabla_{\bar{1}} \chi_{k \bar{k}}-\nabla_k \nabla_{\bar{k}} \chi_{1\bar{1}}-\sum_{p>1} \frac{|\nabla_{\bar{k}} \chi_{1 \bar{p}}|^2+|\nabla_{\bar{k}}\chi_{p \bar{1}}|^2}{\lambda_1-\tlambda_p} \bigg)\\
&+& F^{l \bar{k},s\bar{r}} \nabla_{\bar{1}} \chi_{l \bar{k}} \nabla_1 \chi_{s \bar{r}}.
\end{eqnarray*}
The first two terms are estimated as
\begin{eqnarray*}
\nabla_1 \nabla_{\bar{1}} \chi_{k \bar{k}}-\nabla_k \nabla_{\bar{k}} \chi_{1\bar{1}} &=& \nabla_1 \nabla_{\bar{1}} \hat{\chi}_{k \bar{k}}-\nabla_k \nabla_{\bar{k}} \hat{\chi}_{1 \bar{1}}+\nabla_1 \nabla_{\bar{1}} \phi_{k \bar{k}}-\nabla_k \nabla_{\bar{k}} \phi_{1 \bar{1}}\\
& \leq & C_1+\Rm \ast \nabla \bnabla \phi \\
&\leq& C_2(\lambda_1+1)
\end{eqnarray*}
since $\nabla \bnabla \phi$ is controlled by $\lambda_1$, where $\Rm$ denotes the Riemannian curvature tensor of $\a$, and the constants $C_1$, $C_2$ only depends on $\a$ and $\hat{\chi}$. Thus we obtain
\begin{equation} \label{elogl}
\cL \log \tlambda_1 \leq C_2(1+\lambda_1^{-1}) \cF+\frac{1}{\lambda_1} F^{l \bar{k},s\bar{r}} \nabla_{\bar{1}} \chi_{l \bar{k}} \nabla_1 \chi_{s \bar{r}}+F^{k\bar{k}} \frac{|\nabla_{\bar{k}} \tlambda_1|^2}{\lambda_1^2}.
\end{equation}
For the first term of \eqref{elogl}, we have $C_2(1+\lambda_1^{-1}) \cF<C_3$ at the origin by $\lambda_1 \geq 1$ and Corollary \ref{lbffc}. From the concavity of $f$, the second term is non-positive. The third term is zero at $(x_0,t_0)$ by $\nabla \tilde{G}=0$. Thus applying the maximum principle to the function $\tilde{G}$ with $D:=C_3+1$, we conclude that $t_0=0$. This gives the desired bound.
\end{proof}
With the $C^2$-estimate in hand, we obtain a uniform control of the eigenvalues along the flow. Moreover if we assume $\hat{\Theta} \in ((n-1)\hpi, n \hpi)$, then the operator $F(A[\phi_t])$ in the RHS of \eqref{TLPF} is uniformly elliptic and concave. So we apply the Evans-Krylov theory \cite{Kry82,Wan12} to obtain;
\begin{lem} \label{evkry}
Let $\phi_t$ be the TLPF with $\phi_0 \in \cH$. Assume $\hat{\Theta} \in ((n-1)\hpi, n \hpi)$ and $\|\dd \phi \|_{C^0} \leq C_0$. Then there exist constants $C>0$ and $\b \in (0,1)$ depending only on $\a$, $\hat{\chi}$ and $C_0$ such that
\[
\|\dd \phi \|_{C^\b(X \times [0,T))} \leq C.
\]
\end{lem}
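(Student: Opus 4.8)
The plan is to deduce Lemma~\ref{evkry} from the parabolic Evans--Krylov theory, so the real content is to check that along the TLPF the operator $F$ is uniformly elliptic and concave with constants controlled by $\a$, $\hat{\chi}$ and $C_0$ (and the fixed angle $\hat{\Theta}$). First I would extract the consequences of the hypothesis $\|\dd\phi\|_{C^0}\le C_0$. Since $\chi_\phi=\hat{\chi}+\dd\phi$, the eigenvalues $\lambda[\phi]$ of $A[\phi]=\chi_{i\bar j}\a^{k\bar j}$ satisfy $|\lambda_i|\le\Lambda$ for some $\Lambda=\Lambda(\a,\hat{\chi},C_0)$. By Lemma~\ref{sff} the flow stays in $\cH$, so $|\Theta(A[\phi_t])-\hat{\Theta}|<\hpi$; combining with the trivial upper bound $\Theta(A[\phi_t])-\hat{\Theta}<n\hpi-\hat{\Theta}$, which is available because $\hat{\Theta}>(n-1)\hpi$, the quantity $\theta(\lambda[\phi_t])-\hat{\Theta}$ is confined to a fixed compact subinterval of $(-\hpi,\hpi)$. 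Hence $f=F(A[\phi_t])$ is uniformly bounded, and by \eqref{propfF} the eigenvalues $f_i=(1+f^2)/(1+\lambda_i^2)$ of the linearization are uniformly pinched, $0<c\le f_i\le C$, so $F^{k\bar k}$ is uniformly elliptic with the stated dependence.

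For concavity, note that $F$ is the restriction to Hermitian matrices of the $U(n)$-invariant function $\tan\circ(\theta-\hat{\Theta})$, which by Theorem~\ref{tanconv} is concave on $\cS=\{\lambda:|\theta(\lambda)-\hat{\Theta}|<\hpi\}$. Since $\Theta(A)<n\hpi\le\hat{\Theta}+\hpi$ automatically, $\cS$ corresponds (translating a symmetric convex set of eigenvalue tuples into a convex set of Hermitian matrices in the standard way) to the set $\{A:\Theta(A)>\hat{\Theta}-\hpi\}$, which is a convex open set of Hermitian matrices because $\hat{\Theta}-\hpi\in((n-2)\hpi,(n-1)\hpi)$ and $\{\lambda:\theta(\lambda)\ge\s\}$ is convex for $\s\in((n-2)\hpi,n\hpi)$ by Proposition~\ref{bps}(2). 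So $F$ is concave on a fixed convex neighbourhood of the range of $A[\phi_t]$ along the flow, again with all constants depending only on $\a,\hat{\chi},C_0,\hat{\Theta}$.

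It then remains to localize and quote the theorem. Covering $X$ by finitely many coordinate balls and writing $\a=\dd\rho$ and $\hat{\chi}=\dd\psi$ with $\rho,\psi$ smooth local potentials, the flow equation \eqref{TLPF} becomes, on each ball, a scalar equation $\p_t u=\tilde{F}(x;u_{i\bar j})$ for $u=\phi+\psi$, where $\tilde F$ is smooth in $x$ and, over the range of Hessians fixed by the $C^2$-bound, uniformly elliptic and concave in $u_{i\bar j}$. The interior parabolic Evans--Krylov estimate \cite{Kry82,Wan12} then yields $\|u_{i\bar j}\|_{C^\b}\le C$ on smaller parabolic cylinders for some $\b\in(0,1)$ and $C$ depending only on $\a,\hat{\chi},C_0$ (and $\hat{\Theta}$); patching these bounds gives the claimed parabolic $C^\b$ bound for $\dd\phi$ on $X\times[0,T)$, with the estimate up to $t=0$ coming from $\phi_0\in C^\infty(X;\R)$ and short-time existence, which make the flow uniformly smooth on a fixed initial interval $[0,\e]$. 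The only step I would treat with genuine care is the uniform ellipticity: $\tan(\theta-\hat{\Theta})$ blows up as $\theta-\hat{\Theta}\to\pm\hpi$, so it is essential here to use both that the TLPF preserves $\cH$ (Lemma~\ref{sff}) and the hypothesis $\hat{\Theta}\in((n-1)\hpi,n\hpi)$, which together keep $\theta(\lambda[\phi_t])-\hat{\Theta}$ bounded away from $\pm\hpi$; once this is in place the rest is the standard machinery for concave uniformly parabolic equations of complex Hessian type.
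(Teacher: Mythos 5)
Your proof is correct and follows essentially the same route as the paper, which simply observes that the $C^2$-bound together with Lemma \ref{sff} and $\hat{\Theta}\in((n-1)\hpi,n\hpi)$ makes $F$ uniformly elliptic and, by Theorem \ref{tanconv}, concave on a convex set containing the range of $A[\phi_t]$, and then invokes the Evans--Krylov theory \cite{Kry82,Wan12} without further detail. The point you flag as needing care --- that the lower bound on $\Theta(A[\phi_t])-\hat{\Theta}$ away from $-\hpi$ comes from Lemma \ref{sff} rather than from the $C^2$-bound alone --- is exactly right, and shows the constants implicitly also depend on $\phi_0$ through $\inf_X\Theta(A[\phi_0])$, an imprecision already present in the paper's own statement of the lemma.
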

The higher order regularity of the flow follows from the Schauder estimates and a standard bootstrapping argument. We omit the detailed proofs. Finally, a standard argument using Ascoli-Arzel\`a theorem shows that;
\begin{thm}
Let $\phi_t$ be the tangent Lagrangian phase flow with $\phi_0 \in \cH$. Assume that $\|\dd \phi \|_{C^\b(X \times [0,T))}$ is uniformly controlled for some $\b \in (0,1)$ (where the constant $\b$ may depend on $T$). Then the flow $\phi_t$ extends beyond $T$. In particular, if $\hat{\Theta} \in ((n-1)\hpi, n \hpi)$, the above assumption is automatically satisfied, and hence the flow $\phi_t$ exists for all positive time.
\end{thm}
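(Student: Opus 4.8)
The plan is to read the statement as a standard continuation criterion for the quasilinear parabolic equation $\ddt \phi = F(A[\phi])$: granting a uniform parabolic H\"older bound on $\dd\phi$ up to time $T$, parabolic Schauder theory upgrades this to uniform $C^\infty$ control on $X \times [0,T)$, the flow extends continuously to $t=T$ with $\phi_T \in \cH$, and short time existence applied to $\phi_T$ (as noted after \eqref{TLPF}) produces the extension past $T$.

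For the first assertion, fix $T' < T$. By hypothesis $\dd\phi$ is bounded in $C^\b$ on $X \times [0,T']$ with a bound independent of $T'$, so the coefficients $F^{ij}$ evaluated along the flow lie in $C^\b$ uniformly and the operator is uniformly parabolic there. Differentiating $\ddt \phi = F(A[\phi])$ in a spatial direction, each first derivative $\nabla\phi$ solves a linear uniformly parabolic equation with $C^\b$ coefficients; interior parabolic Schauder estimates give a uniform $C^{3,\b}$ bound on $\phi$, and iterating (together with smoothness of $\phi$ near $t=0$, which holds since the flow starts from smooth data) yields, for each $k$, a uniform $C^k$ bound on $X \times [0,T)$ depending only on $\a$, $\hat{\chi}$, $\hat{\Theta}$, $\phi_0$, $T$ and $k$. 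Letting $T' \uparrow T$ and applying Ascoli--Arzel\`a, $\phi_t$ converges in $C^\infty(X)$ as $t \uparrow T$ to some $\phi_T \in C^\infty(X;\R)$. Lemma \ref{sff} passes to the limit to give $\inf_X \Theta(A[\phi_0]) \leq \Theta(A[\phi_T]) \leq \sup_X \Theta(A[\phi_0])$, whence $|\Theta(A[\phi_T])-\hat{\Theta}| < \hpi$, i.e. $\phi_T \in \cH$. The TLPF is strictly parabolic in a neighbourhood of $\phi_T$, so short time existence produces a solution with initial value $\phi_T$ on $[T,T+\e)$; by uniqueness of solutions to this parabolic equation it agrees with $\phi_t$ where both are defined, so the flow extends beyond $T$.

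For the ``in particular'' clause I would verify that when $\hat{\Theta} \in ((n-1)\hpi, n\hpi)$ the hypothesis of the first part holds on every interval $[0,T)$ with $T$ below the maximal existence time. Lemma \ref{sff} keeps the flow in $\cH$ and bounds $\|\ddt\phi\|_{C^0}$, the ensuing $C^0$ estimate bounds $\|\phi_t\|_{C^0}$ by a constant $C_T$, and Lemma \ref{secdl} bounds $|\dd\phi_t|_\a$ by $C_T$. Since $\phi_t \in \cH$ and $\hat{\Theta} > (n-1)\hpi$ force $\Theta(A[\phi_t]) > (n-2)\hpi$, Proposition \ref{bps}(1) gives $|\lambda_i| \leq \lambda_1$ for all $i$, so all eigenvalues of $A[\phi_t]$ lie in a fixed compact interval; as $f$ is bounded (Lemma \ref{sff}) and $f_i = \frac{1+f^2}{1+\lambda_i^2}$, the operator $F$ is uniformly elliptic along the flow and concave by Theorem \ref{tanconv}. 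Hence Lemma \ref{evkry} applies and gives a uniform $C^\b$ bound on $\dd\phi$ on $X \times [0,T)$. The first part then extends the flow past every such $T$, so the maximal existence time is infinite.

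The routine ingredient is the Schauder bootstrap; the step that really uses the structure of the problem is the a priori second order estimate of Lemma \ref{secdl}, where the tan-concavity of Theorem \ref{tanconv} is exactly what makes the dangerous third order term non-positive. For the present statement the only genuine points of care are that every constant in the bootstrap is uniform as $t \uparrow T$ and that the limit $\phi_T$ indeed lies in $\cH$, so that the flow can be legitimately restarted and glued.
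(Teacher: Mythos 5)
Your proposal is correct and follows essentially the same route as the paper, which itself only sketches this step: you assemble Lemma \ref{sff}, the ensuing $C^0$ bound, Lemma \ref{secdl}, the uniform ellipticity and concavity from Theorem \ref{tanconv}, and the Evans--Krylov estimate of Lemma \ref{evkry} into the uniform $C^\beta$ control, and then carry out the standard Schauder bootstrap, Ascoli--Arzel\`a limit, and restart argument that the paper leaves to the reader. The details you supply (in particular that $\phi_T\in\cH$ by passing Lemma \ref{sff} to the limit) are exactly the ones needed.
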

\section{Convergence of the TLPF under the existence of a $C$-subsolution} \label{convfpC}
\subsection{$C$-subsolutions}
Let $\Gamma_n$ be the positive orthant of $\R^n$. Collins-Jacob-Yau \cite{CJY15} introduced the notion of $C$-subsolutions;
\begin{dfn} \label{ecs}
A function $\sphi \in C^\infty(X;\R)$ is called a $C$-subsolution if for any $x \in X$, the set
\[
\big\{ \mu \in \Gamma_n \big| \theta(\lambda[\sphi(x)]+\mu)=\hat{\Theta} \big\}
\]
is bounded.
\end{dfn}
\begin{rk} \label{subac}
From \cite[Lemma 3.3]{CJY15}, any $C$-subsolution $\sphi$ must satisfy $\Theta(A[\sphi])>\frac{n}{n-1}(\hat{\Theta}-\hpi)$. In particular, any $C$-subsolution $\sphi$ is almost calibrated  when $\hat{\Theta}>(n-1)\hpi$.
\end{rk}
In particular, a genuine solution to \eqref{dHYMf} is clearly a $C$-subsolution. In \cite{CJY15}, the notion of $C$-subsolutions is used to study the elliptic equation \eqref{dHYMf}. In the next subsection, we will see that the same notion is also useful to study the limiting behavior of the TLPF. Set
\[
g(\lambda,\tau):=f(\lambda)+\tau, \quad (\lambda,\tau) \in \cS \times \R.
\]
The condition $f_i>0$ shows that at each point $(x,t)$, the ray $\{(\lambda[\sphi(x,t)]+s\mu,s\tau)|s \geq 0\}$ generated by any non-zero element $(\mu,\tau) \in \overline{\Gamma}_n \times \R_{\geq 0}$ intersects transversely with the level set $\{g=0\}$ just once. So by the compactness, there is a $\d>0$ and $K>0$ such that at each $(x,t) \in X \times [0,T)$, any element in the set
\begin{equation} \label{dKsub}
\big\{ (\mu,\tau) \in \Gamma_n \times \R_{\geq 0} \big| f(\lambda[\sphi(x,t)]-\d I+\mu)+\tau-\d=0 \big\}
\end{equation}
satisfies $|\mu|+|\tau|<K$, where $I$ denotes the vector $(1,\ldots,1)$ of eigenvalues of the identity matrix. In later arguments, we fix this $\d$ and $K$.
\subsection{Up to $C^k$-estimates}
In the remaining of the paper, we prove the second part of Theorem \ref{convf}. Again we note that the proof is mostly based on general theory of fully non-linear parabolic equations \cite{PT17}. However our function $f$ does not have the structural properties imposed in \cite{PT17,Sze18}. On the contrary, the function $f$ can not be extended to a symmetric cone $\Gamma \subset \R^n$ containing $\cS$ since $f(\lambda) \to -\infty$ as $\lambda$ reaches the boundary $\p \cS$. For this reason, we need to check carefully to see if every argument in \cite{PT17} carries over to our case. We will explain there is no substantial differences from \cite{PT17} except the gradient estimate (\cf Lemma \ref{gradb}).

In what follows we assume that $\hat{\Theta} \in ((n-1)\hpi,n \hpi)$ and there is a $C$-subsolution $\sphi$. Set $\hat{\chi}=\chi_{\sphi}$ and let $\phi_t$ be the TLPF with $\phi_0 \in \cH$. One can prove the following two lemmas exactly as in \cite{PT17};
\begin{lem}[see \cite{PT17}, Lemma 1]
There exists a uniform constant $C>0$ depending only on $\a$, $\hat{\chi}$, $\hat{\Theta}$ and $\phi_0$ such that $\|\phi_t\|_{C^0} \leq C$.
\end{lem}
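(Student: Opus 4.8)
The plan is to first obtain a bound on the oscillation $\osc_X\phi_t:=\sup_X\phi_t-\inf_X\phi_t$ that is uniform in $t$, and then to control the remaining additive constant by using that $\cC$ is conserved along the flow. The starting observation is that at each fixed time $t$ the function $\phi_t$ solves the \emph{elliptic} equation $F(A[\phi_t])=\psi_t$ with $\psi_t:=\ddt\phi_t=\tan(\Theta(A[\phi_t])-\hat{\Theta})$, and by Lemma \ref{sff} one has $\|\psi_t\|_{C^0}\leq C$ with $C$ independent of $t$, since $\Theta(A[\phi_t])$ stays in the fixed compact subinterval $[\inf_X\Theta(A[\phi_0]),\sup_X\Theta(A[\phi_0])]$ of $(\hat{\Theta}-\hpi,\hat{\Theta}+\hpi)$.

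For the oscillation bound I would run, time slice by time slice, the Alexandrov--Bakelman--Pucci argument of Sz\'ekelyhidi \cite{Sze18} in the parabolic form of \cite{PT17}. Shifting $\phi_t$ for this estimate so that $\sup_X\phi_t=0$, pick a point $x_0$ where $\phi_t$ attains its infimum, and in a fixed coordinate ball around $x_0$ consider the slopes $p$ for which $x\mapsto\phi_t(x_0)+p\cdot(x-x_0)$ is a lower support for $\phi_t$; on the corresponding contact set the real Hessian $D^2\phi_t$ is $\geq 0$, and combining this with the bound on $\psi_t$ and the quantitative $C$-subsolution property \eqref{dKsub} (expressed through the level sets of $g(\lambda,\tau)=f(\lambda)+\tau$) forces $D^2\phi_t$ to be bounded above there, after which the standard Jacobian/measure computation bounds $|p|$ and hence $-\inf_X\phi_t$. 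This yields $\osc_X\phi_t\leq C$ uniformly. The point worth emphasizing is that this step uses only ellipticity, the $C^0$ bound on the right-hand side, and the $C$-subsolution condition, so that the fact that $f$ blows up along $\p\cS$ is irrelevant here (it becomes an issue only for the gradient estimate, Lemma \ref{gradb}), and the argument transfers from \cite{PT17} with no change.

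It remains to fix the constant. By Proposition \ref{eff}(1), $\cC(\phi_t)=\cC(\phi_0)$ for all $t$, so setting $\tilde{\phi}_t:=\phi_t-|Z|^{-1}\cC(\phi_0)$ gives $\cC(\tilde{\phi}_t)=0$ since $\cC(\phi+c)=\cC(\phi)+c|Z|$. Exactly as in the proof of Proposition \ref{har}, since $\phi_t\in\cH$ (Lemma \ref{sff}) and $\cH$ is convex with $0\in\cH$ when $\hat{\Theta}>(n-1)\hpi$ (Remark \ref{subac}), the segment $s\tilde{\phi}_t$ stays in $\cH$ and $\nu_{\tilde{\phi}_t}:=\int_0^1\Re\big(e^{-\sqrt{-1}\hat{\Theta}}(\a+\sqrt{-1}\chi_{s\tilde{\phi}_t})^n\big)\,ds$ is a positive measure; the variational formula for $\cC$ gives $\int_X\tilde{\phi}_t\,\nu_{\tilde{\phi}_t}=\cC(\tilde{\phi}_t)=0$, whence $\inf_X\tilde{\phi}_t\leq 0\leq\sup_X\tilde{\phi}_t$. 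Therefore $\|\tilde{\phi}_t\|_{C^0}\leq\osc_X\tilde{\phi}_t=\osc_X\phi_t\leq C$, and hence $\|\phi_t\|_{C^0}\leq C+|Z|^{-1}|\cC(\phi_0)|$, with the final constant depending only on $\a,\hat{\chi},\hat{\Theta}$ and $\phi_0$. The one genuinely substantial step is the oscillation estimate: it is the only place the $C$-subsolution hypothesis is consumed, and the only care needed is to check that the input required by the ABP machinery is precisely the boundedness statement \eqref{dKsub} for the perturbed subsolution; everything else, including the normalization step, is already contained in the derivation of Proposition \ref{har}.
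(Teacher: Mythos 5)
Your overall architecture (an ABP-type estimate for one side of the bound plus conservation of $\cC$ to pin down the additive constant) matches the paper's, and your normalization step via $\cC(\phi_t)=\cC(\phi_0)$ is exactly the content of Proposition \ref{har} / Corollary \ref{harff}. But the oscillation estimate as you set it up has a genuine gap. Running the \emph{elliptic} ABP argument on each time slice for $F(A[\phi_t])=\psi_t$ requires, on the lower contact set, that the equation value $\psi_t(x)$ together with $\lambda[\phi_t]\in\lambda[\sphi]-\d I+\Gamma_n$ force $\lambda[\phi_t]$ to be bounded; that is, you need the sets $\{\mu\in\Gamma_n \mid f(\lambda[\sphi]-\d I+\mu)=\psi\}$ to be bounded for \emph{every} value $\psi$ that $\psi_t$ can take. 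The quantitative statement \eqref{dKsub} you invoke only provides this for $\tau\geq 0$, i.e.\ for equation values $f\leq\d$. For $\psi>0$ the relevant level set is $\{\mu\in\Gamma_n\mid\theta(\lambda[\sphi]+\mu)=\hat{\Theta}+\arctan\psi\}$, and a $C$-subsolution in the sense of Definition \ref{ecs} (equivalently $\sum_{j\neq i}\arctan\lambda_j[\sphi]>\hat{\Theta}-\hpi$ for all $i$, by \cite{CJY15}) does \emph{not} guarantee boundedness at the raised level: that would require the strictly stronger inequality $\sum_{j\neq i}\arctan\lambda_j[\sphi]>\hat{\Theta}+\arctan\psi-\hpi$, which can fail. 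Since along the flow $\Theta(A[\phi_t])$ is only confined to $[\inf_X\Theta(A[\phi_0]),\sup_X\Theta(A[\phi_0])]$ and will in general exceed $\hat{\Theta}$ somewhere, your contact set can contain points at which no eigenvalue bound is available, and the Jacobian/measure computation breaks down there.

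This is precisely why the paper uses the \emph{parabolic} ABP estimate of \cite{Tso85} as in \cite{PT17}: the relevant contact points occur at space-time minima where $\ddt\phi\leq 0$, so the pair $(\mu,\tau)$ with $\tau=-\ddt\phi\geq 0$ lands in the set \eqref{dKsub} and the eigenvalue bound is available --- but, as the paper notes, this only yields a uniform \emph{lower} bound for $\phi_t$, not an oscillation bound. The upper bound is then supplied by the Harnack-type inequality of Corollary \ref{harff} (which is where conservation of $\cC$, convexity of $\cH$, and the Green's function estimate enter). So your second step is essentially correct, but it must be fed a one-sided bound rather than an oscillation bound, and your first step should be replaced by the genuinely parabolic argument.
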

\begin{proof}
This lemma is based on the parabolic version of the Alexandrov-Bakelman-Pucci estimates due to \cite{Tso85}. It is straightforward to check that the proof requires only the lower bound $\Delta_\a \phi_t \geq -C$ (\cf Lemma \ref{gfs}), the ellipticity of the operator and the boundedness of the set \eqref{dKsub}. Unlike the elliptic case, the argument for parabolic case is more subtle, which just provides a uniform lower bound for $\phi_t$ as mentioned in \cite{PT17}. So we apply the Harnack type equality (\cf Corollary \ref{harff}) to get the full estimate of $\phi_t$.
\end{proof}
\begin{lem}[see \cite{PT17}, Lemma 3] \label{key}
There exists a constant $\r=\r(\d,K)>0$ (where the constants $\d$, $K$ are defined in \eqref{dKsub}) so that if $|\lambda[\phi_t]-\lambda[\sphi]|>K$, then either
\[
\cL \phi_t>\r \cF(\lambda[\phi_t])
\]
or we have for any $i=1,\ldots,n$,
\[
F^{ii}(A[\phi_t])>\r \cF(\lambda[\phi_t]).
\]
\end{lem}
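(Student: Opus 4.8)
The plan is to follow, essentially verbatim, Sz\'ekelyhidi's structural lemma for $C$-subsolutions (\cite[proof of Proposition 6]{Sze18}) together with its parabolic reformulation in \cite[Lemma 3]{PT17}; as anticipated above, the non-extendability of $f$ to a symmetric cone is harmless here, since the whole argument is carried out on the single level set $\{g=0\}$ of the \emph{concave} function $g(\lambda,\tau)=f(\lambda)+\tau$ and uses only the convexity of $\{g\geq 0\}$ provided by Theorem \ref{tanconv}. The first step is to reduce the second alternative to a single index. Fix $(x,t)$ and choose $\a$-normal coordinates at $x$ diagonalizing $A[\phi_t]$, with eigenvalues $\lambda_1\geq\cdots\geq\lambda_n$. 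Since $\phi_t\in\cH$ we have $\Theta(A[\phi_t])>(n-2)\hpi$, so Proposition \ref{bps} (1) applies and \eqref{propsF} (see the ordering noted just after it) gives $f_1\leq\cdots\leq f_n$; as $F^{ii}=f_i$ by \eqref{propfF}, the conclusion ``$F^{ii}(A[\phi_t])>\r\cF(\lambda[\phi_t])$ for all $i$'' is equivalent to ``$f_1(\lambda[\phi_t])>\r\cF(\lambda[\phi_t])$''.

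Next I would rewrite $\cL\phi_t$ as a quantity recording where the shifted subsolution point sits relative to the supporting hyperplane of $\{g\geq 0\}$ at the flow point. Because $\hat{\chi}=\chi_{\sphi}$, at our point $\nabla_k\nabla_{\bar k}\phi_t=\lambda_k-\underline{a}_{kk}$, where $\underline{a}_{kk}$ are the diagonal entries of $A[\sphi]$ in these coordinates, so $\cL\phi_t=F(A[\phi_t])-\sum_k f_k(\lambda_k-\underline{a}_{kk})$. The standard consequence of concavity of $F$ and the Schur--Horn theorem, $\sum_k f_k\underline{a}_{kk}\geq\sum_k f_k\underline{\lambda}_k$ with $\underline{\lambda}_1\geq\cdots\geq\underline{\lambda}_n$ the eigenvalues of $A[\sphi]$ (see \cite[Section 4]{Sze18}), then gives
\[
\cL\phi_t\ \geq\ F(A[\phi_t])-\sum_k f_k(\lambda_k-\underline{\lambda}_k)\ =\ \langle\,p-q,\ \nabla g(q)\,\rangle+\d\,\cF(\lambda[\phi_t]),
\]
where $q:=(\lambda[\phi_t],\,-F(A[\phi_t]))$ lies on $\{g=0\}$, $p:=(\lambda[\sphi]-\d I,\,0)$, and $\nabla g(q)=(f_1,\dots,f_n,1)$ is the inner normal to $\{g\geq 0\}$ at $q$. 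Concavity of $g$ alone only gives $\langle p-q,\nabla g(q)\rangle\geq g(p)-g(q)=f(\lambda[\sphi]-\d I)$, hence $\cL\phi_t\geq f(\lambda[\sphi]-\d I)+\d\,\cF$, which need not be positive; the hypothesis $|\lambda[\phi_t]-\lambda[\sphi]|>K$ together with the $C$-subsolution property is exactly what is needed to do better.

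The geometric core is then Sz\'ekelyhidi's dichotomy, applied in the $n+1$ variables $(\lambda,\tau)$. The boundedness of the set \eqref{dKsub} says precisely that the translated orthant $p+(\overline{\Gamma}_n\times\R_{\geq 0})$ meets $\{g=0\}$ only inside a ball of controlled radius about $p$, whereas the hypothesis $|\lambda[\phi_t]-\lambda[\sphi]|>K$ --- using that $\lambda[\sphi]$ is a fixed bounded datum, that $F(A[\phi_t])$ is bounded (Lemma \ref{sff}), and that $|\lambda_n|$ is a priori bounded (Corollary \ref{unifev}) --- places $q$ far from $p$, in the regime where Sz\'ekelyhidi's estimate applies. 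Since $\{g\geq 0\}$ is convex, $q$ lies on its boundary, and $\nabla g(q)$ has all entries positive, the usual comparison of the supporting hyperplane of $\{g\geq 0\}$ at $q$ with the orthant at $p$ --- carried out exactly as in \cite[Proposition 6]{Sze18} and \cite[Lemma 3]{PT17}, now keeping the $\tau$-direction --- produces a uniform $\r_0>0$ yielding the following dichotomy: either $\langle p-q,\nabla g(q)\rangle\geq\r_0\,\cF$, in which case the displayed inequality gives $\cL\phi_t\geq\r_0\,\cF$, or the entry $f_1$ of $\nabla g(q)$ satisfies $f_1\geq\r_0\,\cF$, which is the second alternative. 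Here it is essential that the last coordinate of $\nabla g(q)$ is identically $1$: this is what allows the $\tau$-direction to absorb the term $F(A[\phi_t])$, so that no separate bound on it enters the first case. Taking $\r:=\r_0/2$ and using $\cF>0$ to make the inequalities strict, we obtain the lemma with $\r=\r(\d,K)$.

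I expect this geometric dichotomy to be the only delicate step: one must make the ``tilting'' of the supporting hyperplane at $q$ quantitative and uniform over $X\times[0,T)$, and in particular track the extra $\tau$-coordinate correctly, which is the one genuine difference from the elliptic setting of \cite{Sze18}. What makes it go through is that all the relevant points $q$ lie on a single level set of one fixed concave function whose inner normal has last coordinate identically $1$ and nonnegative remaining coordinates summing to $\cF$ --- which is itself bounded above and below along the flow, by Corollary \ref{lbffc}; for this reason the argument of \cite[Lemma 3]{PT17} carries over without substantive change.
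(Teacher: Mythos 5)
Your proposal is correct and follows essentially the same route as the paper, which proves this lemma simply by observing that the argument of \cite[Lemma 3]{PT17} (itself the parabolic version of \cite[Proposition 6]{Sze18}) uses only the ellipticity $f_i>0$ and the convexity of the level set of $g(\lambda,\tau)=f(\lambda)+\tau$, both of which are supplied here by Theorem \ref{tanconv}; your write-up merely spells out the supporting-hyperplane dichotomy that the paper cites. (The only cosmetic slip is that the set \eqref{dKsub} sits on the level $\{g=\delta\}$ rather than $\{g=0\}$, which is exactly the $\delta$-margin that makes $\r_0$ uniform, so nothing is affected.)
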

\begin{proof}
The proof requires only the ellipticity and convexity of the level set of $g$, that are available in our case.
\end{proof}
\begin{rk} \label{efc}
As for the elliptic operator $\Theta$, Collins-Jacob-Yau \cite[Proposition 3.5]{CJY15} proved a similar inequality based on \cite[Proposition 6]{Sze18} only by using the convexity of the level set of $\theta$. Indeed as pointed out in \cite{CJY15}, the proof of \cite[Proposition 6]{Sze18} only requires the ellipticity and the convexity of the level set of $\theta$. However this argument can not be extended directly to the parabolic case, \ie the LBMCF case \eqref{LBMCF}; if we set $h(\lambda,\tau):=\theta(\lambda)+\tau$, then the each level set $h(\lambda,\tau)=c$ defines the graph of the function $\tau=c-\theta(\lambda)$, which is convex if and only if the function $\theta$ itself is concave. This fails as soon as $\theta(\lambda)<(n-1)\hpi$.
\end{rk}
\begin{lem} \label{secdoest}
We have the estimate
\[
|\dd \phi_t|_\a \leq C(1+\sup_{X \times [0,T)}|\nabla \phi_t|_\a^2),
\]
where the constant $C>0$ depends only on $\a$, $\hat{\chi}$, $\hat{\Theta}$ and $\phi_0$. 
\end{lem}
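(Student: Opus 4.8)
The estimate is the parabolic second-order estimate in the style of \cite{PT17,Sze18}, and the plan is to run that argument essentially verbatim, the only substitutions being that the structural concavity hypothesis used there is supplied by Theorem \ref{tanconv}, and that the degeneracy of $f$ near $\p\cS$ is absorbed by bounds already at our disposal. Fix $T'<T$ and set $L:=1+\sup_{X\times[0,T']}|\nabla\phi_t|_\a^2$ (finite by smoothness of the flow on $[0,T']$; note $L$ is unrelated to the constant $K$ of \eqref{dKsub}). I would apply the parabolic maximum principle on $X\times[0,T']$ to a test function of the form
\[
G:=\log\lambda_1+\varphi\bigl(|\nabla\phi_t|_\a^2\bigr)-A\phi_t,\qquad \varphi(s):=-\tfrac12\log\!\Bigl(1-\tfrac{s}{2L}\Bigr),
\]
where $A>0$ depends only on $\a$, $\hat{\chi}$, $\hat{\Theta}$ and the constants $\d,K$ of \eqref{dKsub}; here $\varphi$ is bounded with $\varphi'\sim L^{-1}$ and $\varphi''=2(\varphi')^2>0$. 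If $G$ attains its maximum at $(x_0,t_0)$ with $t_0=0$, the bound follows from $\phi_0$ alone, so assume $t_0>0$. Exactly as in the proof of Lemma \ref{secdl}, since $\lambda_1$ need not be differentiable I would pass to the perturbed top eigenvalue $\tlambda_1$ (subtracting a small diagonal matrix $B$ with $0=B^{11}<B^{22}<\dots$), work with $\tilde G:=\log\tlambda_1+\varphi(|\nabla\phi_t|^2)-A\phi_t$ in $\a$-normal coordinates at $x_0$ in which $A[\phi_t]$ is diagonal.

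At $(x_0,t_0)$ one has $\cL\tilde G\geq0$, and I would bound the three contributions in the usual way. First, $\cL\log\tlambda_1$ is estimated as in Lemma \ref{secdl}: the combination $\nabla_1\nabla_{\bar1}\chi_{k\bar k}-\nabla_k\nabla_{\bar k}\chi_{1\bar1}$ is controlled by $C(\lambda_1+1)$ using that $\nabla\bnabla\phi_t$ is controlled by $\lambda_1$, that $\Rm$ of $\a$ and $\hat{\chi}$ are fixed, and --- the key point --- the second-order term $\lambda_1^{-1}F^{l\bar k,s\bar r}\nabla_{\bar1}\chi_{l\bar k}\nabla_1\chi_{s\bar r}$ is $\leq0$ by Theorem \ref{tanconv}; this is precisely where tan-concavity of $f$ replaces the structural hypothesis of \cite{PT17,Sze18}. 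Second, $\cL(-A\phi_t)=-AF(A[\phi_t])+A\sum_k f_k(\chi_{\phi_t}-\hat{\chi})_{k\bar k}$ is where the $C$-subsolution enters through Lemma \ref{key}: when $|\lambda[\phi_t]-\lambda[\sphi]|\leq K$, $\lambda_1$ is already bounded; otherwise either $\cL\phi_t>\r\cF$, in which case, with $A$ large, the term $-A\cL\phi_t<-A\r\cF$ dominates the finitely many error terms bounded by $C\cF$ and forces $\cL\tilde G<0$, a contradiction; or $F^{ii}(A[\phi_t])>\r\cF$ for every $i$, and taking $i=1$ gives $f_1=\frac{1+f^2}{1+\lambda_1^2}>\r\cF\geq\r/C$ by Corollary \ref{lbffc}, whence $\lambda_1\leq C$ since $f$ is bounded above (because $\hat{\Theta}>(n-1)\hpi$, cf.\ Lemma \ref{sff}). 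Third, $\cL\varphi(|\nabla\phi_t|^2)=\varphi'\cL|\nabla\phi_t|^2-\varphi''F^{k\bar k}|\nabla_k|\nabla\phi_t|^2|^2$; expanding $\cL|\nabla\phi_t|^2$ ($\a$ being K\"ahler, only curvature commutators appear) produces, besides terms bounded by $C\cF L$, the negative terms which --- together with the $\varphi''$-term and the negative term $-\lambda_1^{-2}F^{k\bar k}\sum_{p>1}(|\nabla_{\bar k}\chi_{1\bar p}|^2+|\nabla_{\bar k}\chi_{p\bar1}|^2)$ from $\cL\log\tlambda_1$, after splitting the indices into those with $\lambda_i$ comparable to $\lambda_1$ and the rest as in \cite{Sze18} --- control the bad gradient term $F^{k\bar k}|\nabla_{\bar k}\tlambda_1|^2/\lambda_1^2$, the normalization of $\varphi$ by $L$ being exactly what makes the error terms controllable once $\lambda_1\gg L$. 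Combining the three and using $\cF,\cF^{-1}\leq C$ (Corollary \ref{lbffc}) forces $\lambda_1(x_0,t_0)\leq CL$, hence $G\leq\log(CL)+A\|\phi_t\|_{C^0}$ on $X\times[0,T']$, so $\lambda_1\leq CL=C(1+\sup_{X\times[0,T)}|\nabla\phi_t|_\a^2)$ everywhere; since $|\lambda_n|<C$ (Corollary \ref{unifev}) and $\lambda_1\geq\dots\geq\lambda_n$, the same bound holds for $|\dd\phi_t|_\a$, and the constants being independent of $T'$ we let $T'\uparrow T$.

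The main obstacle is not any single computation but verifying that the argument of \cite{PT17,Sze18} genuinely transfers: $f$ is not the restriction of a function defined on a symmetric cone containing $\cS$, and $f\to-\infty$ as $\lambda\to\p\cS$, so at face value the structural conditions of those references are unavailable. The point I would emphasize is that every quantity actually used --- the ellipticity constants $f_i$, the two-sided bound on $\cF$, the bound on $|\lambda_n|$, and the genuine concavity of $f$ on $\cS$ --- is uniformly controlled along the TLPF because $\phi_t$ stays in $\cH$ with $\hat{\Theta}>(n-1)\hpi$ (Lemma \ref{sff}, Corollaries \ref{unifev} and \ref{lbffc}); once this is granted the algebra of \cite{Sze18} carries over unchanged, the only genuinely new difficulty in the whole program being the gradient estimate of Lemma \ref{gradb}.
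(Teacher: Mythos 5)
Your proposal follows essentially the same route as the paper: the test function $\log\lambda_1+\Phi(|\nabla\phi|^2)+\Psi(\phi)$ with $\Phi(s)=-\tfrac12\log(1-\tfrac{s}{2P})$, the perturbation $\tlambda_1$ from Lemma \ref{secdl}, the concavity of $F$ from Theorem \ref{tanconv}, the dichotomy of Lemma \ref{key} entering through $\Psi'\cL\phi$, and the uniform control of $\cF$ and $|\lambda_n|$ from Corollaries \ref{lbffc} and \ref{unifev}. The one real deviation is your linear term $-A\phi_t$ in place of the paper's $\Psi(\phi)=De^{-\phi}$: the paper uses $\Psi''>0$ (via the choice $4\kappa(\Psi')^2\leq\tfrac12\Psi''$) to absorb the portion of $F^{k\bar k}|\nabla_{\bar k}\tlambda_1|^2/\lambda_1^2$ over the indices $k$ with $F^{k\bar k}>\kappa^{-1}F^{1\bar1}$ after applying $\nabla\tilde G=0$, and with $\Psi''=0$ that absorption is unavailable as written; it can be repaired by instead taking $\kappa\sim(A P)^{-1}$ and bounding that piece by $C\kappa A^2P\cF$, which still yields $\lambda_1\leq CP$, but as sketched (``splitting the indices \dots as in \cite{Sze18}'') the step does not go through verbatim.
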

The proof of this lemma also proceeds along the same line as in \cite[Lemma 2]{PT17}. However, using the uniform control of $|\lambda_n|$ and the vanishing of the torsion tensor of $\a$, we can simplify the argument. We give a proof for the sake of completeness.
\begin{proof}[Proof of Lemma \ref{secdoest}]
Take $T'<T$ and consider the function
\begin{equation} \label{functG}
G:=\log \lambda_1+\Phi(|\nabla \phi|^2)+\Psi(\phi)
\end{equation}
on $X \times [0,T']$, where the functions $\Phi$, $\Psi$ are specified by
\[
\Phi(s):=-\frac{1}{2} \log \bigg( 1-\frac{s}{2P} \bigg), \quad s \in [0,P],
\]
\[
\Psi(s):=D e^{-s}, \quad s \in \big[ \inf_{X \times [0,T']} \phi, \sup_{X \times [0,T']} \phi \big],
\]
where $P:=\sup_{X \times [0,T']}(|\nabla \phi|^2+1)$, and the large constant $D>0$ is chosen in the course of the proof. Then we note that
\[
\frac{1}{4P}<\Phi'<\frac{1}{2P}, \quad \Phi''=2(\Phi')^2>0.
\]
Assume that $G$ attains its maximum on $X \times [0,T']$ at some $(x_0,t_0)$. Now we use a perturbation technique similar to the one used in Lemma \ref{secdl}. We will apply the maximum principle to the function
\begin{equation} \label{functGt}
\tilde{G}:=\log \tlambda_1+\Phi(|\nabla \phi|^2)+\Psi(\phi),
\end{equation}
where we adopt the same notations as in the proof of Lemma \ref{secdl}. Since $|\lambda_n|$ is uniformly controlled along the flow, we may assume that at the origin, $\lambda$ satisfies;
\begin{itemize}
\item $\lambda_1 \geq 1$.
\item $|\lambda[\phi(x_0,t_0)]-\lambda[\sphi(x_0)]|>K$
\item $\kappa \lambda_1 \geq -\lambda_n$.
\item $\frac{1}{1+\lambda_1^2} \leq \frac{\r}{1+\lambda_n^2}$
\end{itemize}
where the constant $\r=\r(\d,K)>0$ is determined in Lemma \ref{key} and $\kappa=\kappa(D,\|\phi\|_{C^0})>0$ is determined in later arguments (if $\lambda$ does not satisfy any of the above four conditions, then the desired estimate already holds). We may assume that $t_0>0$. As for the term of $\cL \tilde{G}$ we already observe in the proof of Lemma \ref{secdl} that
\[
\cL \log \tlambda_1 \leq C_1 \cF+\frac{1}{\lambda_1} F^{l \bar{k},s\bar{r}} \nabla_{\bar{1}} \chi_{l \bar{k}} \nabla_1 \chi_{s \bar{r}}+F^{k\bar{k}} \frac{|\nabla_{\bar{k}} \tlambda_1|^2}{\lambda_1^2},
\]
where we used the lower bound $\lambda_1 \geq 1$ to obtain the first term.
For the second term of $\cL \tilde{G}$ we compute
\begin{eqnarray*}
\cL \big( \Phi \cdot (|\nabla \phi|^2) \big) &=& \Phi' \cdot \cL(|\nabla \phi|^2)-\Phi'' \cdot F^{q \bar{q}} |\nabla_{\bar{q}} (|\nabla \phi|^2)|^2\\
&=& \Phi' \cdot \big( \nabla^j \phi \cdot \cL(\nabla_j \phi)+\nabla^{\bar{j}} \phi \cdot \cL(\nabla_{\bar{j}} \phi)-F^{q \bar{q}}(|\nabla_q \nabla \phi|^2+|\nabla_q \bnabla \phi|^2) \big)\\
&-& \Phi'' \cdot F^{q \bar{q}} |\nabla_{\bar{q}} (|\nabla \phi|^2)|^2,
\end{eqnarray*}
\[
\nabla_{\bar{j}} \ddt \phi=F^{k\bar{k}} \nabla_{\bar{j}} \chi_{k\bar{k}}.
\]
It follows that
\begin{eqnarray*}
\cL(\nabla_{\bar{j}} \phi)&=& F^{k\bar{k}}(\nabla_{\bar{j}} \chi_{k\bar{k}}-\nabla_k \nabla_{\bar{k}} \nabla_{\bar{j}} \phi) \\
&=& F^{k\bar{k}}(\nabla_{\bar{j}} \hat{\chi}_{k\bar{k}}-R_{k \bar{j}}{}^{\bar{m}}{}_{\bar{k}} \nabla_{\bar{m}} \phi).
\end{eqnarray*}
Using $(4P)^{-1}<\Phi'<(2P)^{-1}$ we get
\[
\Phi' \cdot \nabla^{\bar{j}} \phi \cdot \cL (\nabla_{\bar{j}} \phi) \leq C_2 \cF
\]
for some constant $C_2>0$ depending only on $\a$ and $\hat{\chi}$. The similar estimate also holds for  $\Phi' \cdot \nabla^j \phi \cdot \cL (\nabla_j \phi)$. Thus
\[
\cL \Phi(|\nabla \phi|^2) \leq C_3 \cF-F^{q \bar{q}}(|\nabla_q \nabla \phi|^2+|\nabla_q \bnabla \phi|^2)-\Phi'' \cdot F^{q \bar{q}} |\nabla_{\bar{q}} (|\nabla \phi|^2)|^2.
\]
The estimate for the last term of $\cL \tilde{G}$ is straightforward;
\[
\cL (\Psi(\phi))=\Psi' \cL \phi-\Psi'' F^{k\bar{k}} |\nabla_{\bar{k}} \phi|^2.
\]
Summarizing the above estimates we get
\begin{eqnarray} \label{estmt}
0 \leq \cL \tilde{G} &\leq& F^{k\bar{k}} \frac{|\nabla_{\bar{k}} \tlambda_1|^2}{\lambda_1^2}+\frac{1}{\lambda_1} F^{l \bar{k},s\bar{r}} \nabla_{\bar{1}} \chi_{l \bar{k}} \nabla_1 \chi_{s \bar{r}}+C_4 \cF \nonumber \\
&-& F^{q\bar{q}}(|\nabla_q \nabla \phi|^2+|\nabla_q \bnabla \phi|^2)-\Phi'' \cdot F^{q \bar{q}} |\nabla_{\bar{q}} (|\nabla \phi|^2)|^2 \nonumber \\
&+& \Psi' \cL \phi-\Psi'' F^{k\bar{k}} |\nabla_{\bar{k}} \phi|^2.
\end{eqnarray}
To deal with the first bad term, we will use the second, fifth and last good terms. Set
\[
I:=\{i|F^{i\bar{i}} > \kappa^{-1}F^{1\bar{1}} \}.
\]
We note that $1 \notin I$ since $\kappa<1$. Then at the maximum point we have $\nabla \tilde{G}=0$, which yields that
\begin{eqnarray*}
\sum_{k \notin I} F^{k\bar{k}} \frac{|\nabla_{\bar{k}} \tlambda_1|^2}{\lambda_1^2} &=& \sum_{k \notin I} F^{k\bar{k}} |\Phi' \nabla_{\bar{k}} (|\nabla \phi|^2)+\Psi' \nabla_{\bar{k}} \phi|^2 \\
&\leq& 2(\Phi')^2 \sum_{k \notin I} F^{k\bar{k}}|\nabla_{\bar{k}}(|\nabla \phi|^2)|^2+2(\Psi')^2 \sum_{k \notin I} F^{k\bar{k}} |\nabla_{\bar{k}} \phi|^2 \\
&\leq& \Phi'' \sum_{k \notin I} F^{k\bar{k}}|\nabla_{\bar{k}}(|\nabla \phi|^2)|^2+2(\Psi')^2 \kappa^{-1} F^{1\bar{1}}P.
\end{eqnarray*}
On the other hand,
\[
2 \kappa \sum_{k \in I} F^{k\bar{k}} \frac{|\nabla_{\bar{k}} \tlambda_1|^2}{\lambda_1^2} \leq 2\kappa \Phi'' \sum_{k \in I} F^{k\bar{k}}|\nabla_{\bar{k}}(|\nabla \phi|^2)|^2+4 \kappa (\Psi')^2 \sum_{k \in I} F^{k\bar{k}}|\nabla_{\bar{k}} \phi|^2.
\]
Choose $\kappa=\kappa(D, \|\phi\|_{C^0})$ sufficiently small so that $4 \kappa (\Psi')^2 \leq \frac{1}{2} \Psi''$. Then
\begin{eqnarray} \label{estme}
0 &\leq& \frac{1}{\lambda_1} F^{l \bar{k},s\bar{r}} \nabla_{\bar{1}} \chi_{l \bar{k}} \nabla_1 \chi_{s \bar{r}}+(1-2\kappa) \sum_{k \in I} F^{k\bar{k}}\frac{|\nabla_{\bar{k}} \tlambda_1|^2}{\lambda_1^2}-F^{q\bar{q}}(|\nabla_q \nabla \phi|^2+|\nabla_q \bnabla \phi|^2) \nonumber \\
&+& \Psi' \cL \phi+2(\Psi')^2 \kappa^{-1} F^{1\bar{1}}P+C_4 \cF.
\end{eqnarray}
We note that $\nabla_{\bar{1}} \chi_{1\bar{k}}=\nabla_{\bar{k}} \chi_{1\bar{1}}=\nabla_{\bar{k}} \tlambda_1$ since $d \chi=0$ and we are working at a point in normal coordinates. By the concavity and symmetry of $f$ we have
\[
F^{\ell \bar{k},s \bar{r}} \nabla_{\bar{1}} \chi_{\ell \bar{k}} \nabla_1 \chi_{s \bar{r}} \leq \sum_{k \in I} \frac{F^{1\bar{1}}-F^{k\bar{k}}}{\lambda_1-\lambda_k} |\nabla_1 \chi_{k \bar{1}}|^2=\sum_{k \in I} \frac{F^{1\bar{1}}-F^{k\bar{k}}}{\lambda_1-\lambda_k} |\nabla_{\bar{k}} \tlambda_1|^2
\]
since $\frac{F^{1\bar{1}}-F^{k\bar{k}}}{\lambda_1-\lambda_k} \leq 0$ (\cf \cite[equation (67)]{Sze18}). We know that
\[
\frac{1-\kappa}{\lambda_1-\lambda_k} \geq \frac{1-2\kappa}{\lambda_1}.
\]
Indeed,
\[
(1-\kappa)\lambda_1-(1-2\kappa)(\lambda_1-\lambda_k)=\kappa \lambda_1+(1-2\kappa)\lambda_k.
\]
This expression is clearly positive when $\lambda_k \geq 0$. Otherwise, we have $k=n$, and by using the assumption $\kappa \lambda_1 \geq -\lambda_n$, we get
\[
\kappa \lambda_1+(1-2\kappa)\lambda_n \geq -2\kappa \lambda_n>0.
\]
Thus we have
\[
\sum_{k \in I} \frac{F^{1\bar{1}}-F^{k\bar{k}}}{\lambda_1-\lambda_k}|\nabla_{\bar{k}} \tlambda_1|^2 \leq -\sum_{k \in I} \frac{(1-\kappa)F^{k\bar{k}}}{\lambda_1-\lambda_k} |\nabla_{\bar{k}} \tlambda_1|^2 \leq -\frac{1-2\kappa}{\lambda_1} \sum_{k \in I} F^{k\bar{k}}|\nabla_{\bar{k}} \tlambda_1|^2.
\]
So the first and second terms of \eqref{estme} are estimated as
\[
\frac{1}{\lambda_1} F^{l \bar{k},s\bar{r}} \nabla_{\bar{1}} \chi_{l \bar{k}} \nabla_1 \chi_{s \bar{r}}+(1-2\kappa) \sum_{k \in I} F^{k\bar{k}}\frac{|\nabla_{\bar{k}} \tlambda_1|^2}{\lambda_1^2} \leq 0
\]
since $\lambda_1 \geq 1$. As for the forth good term of \eqref{estme}, we use the following estimate
\[
F^{q\bar{q}}(|\nabla_q \nabla \phi|^2+|\nabla_q \bnabla \phi|^2) \geq F^{1\bar{1}}|\lambda_1-\hat{\chi}_{1\bar{1}}|^2 \geq F^{1\bar{1}}\frac{\lambda_1^2}{2}-C_5 \cF
\]
for some constant $C_5$ depending only on $\a$ and $\hat{\chi}$. Putting all things together we obtain
\[
0 \leq F^{11} \bigg(2(\Psi')^2 \kappa^{-1}P-\frac{\lambda_1^2}{2} \bigg)+\Psi' \cL \phi+C_6 \cF
\]
with $C_6:=C_4+C_5$. Now we invoke Lemma \ref{key}. From the assumption $\frac{1}{1+\lambda_1^2} \leq \frac{\r}{1+\lambda_n^2}$, we observe that $\frac{1}{1+\lambda_1^2}<\r \sum_{i=1}^n \frac{1}{1+\lambda_i^2}$, or equivalently $F^{11}<\r \cF$. So we have $\cL \phi \geq \r \cF$. Since $\Psi'<0$, the above inequality yields that
\[
0 \leq F^{11} \bigg(2(\Psi')^2 \kappa^{-1}P-\frac{\lambda_1^2}{2} \bigg)+(\r \Psi'+C_6)\cF.
\]
We take $D>0$ sufficiently large so that $\r \Psi'+C_6<0$ (this is possible since the constant $C_6$ does not depend on $\kappa$). Then we have
\[
\frac{\lambda_1^2}{2} \leq 2(\Psi')^2 \kappa^{-1}P.
\]
This yields the desired bound.
\end{proof}
\begin{lem} \label{gradb}
There is a constant $C>0$ depending on $\a$, $\hat{\chi}$, $\hat{\Theta}$ and $\phi_0$ such that
\begin{equation} \label{gradbd}
\sup_{X \times [0,T)} |\nabla \phi_t|_\a^2 \leq C.
\end{equation}
\end{lem}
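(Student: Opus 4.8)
The plan is to argue by contradiction via a blow-up, along the lines of \cite{PT17,Sze18}, while accounting for the fact that $f$ is only defined on $\cS$ and tends to $-\infty$ on $\p\cS$. Suppose \eqref{gradbd} fails. Since the flow exists for all positive time (Section \ref{tlpf}), there is a sequence $(x_k,t_k)\in X\times[0,\infty)$ with $M_k:=|\nabla\phi(x_k,t_k)|_\a^2\to\infty$, which we arrange so that $M_k=\sup_{X\times[0,t_k]}|\nabla\phi|_\a^2$. In holomorphic normal coordinates for $\a$ centred at $x_k$, rescale: $u_k(z):=\phi(x_k+M_k^{-1/2}z,t_k)-\phi(x_k,t_k)$ for $z\in B_{R_k}(0)\subset\C^n$ with $R_k\to\infty$, and pull back $\a$ and $\hat\chi=\chi_{\sphi}$ under this dilation. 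Then $u_k(0)=0$, $|\nabla u_k|\le 1$ on $B_{R_k}$ with $|\nabla u_k(0)|=1$ (so $|u_k(z)|\le|z|$), the pulled-back Kähler form tends to the Euclidean form in $C^\infty_{loc}$, and the second-order estimate (Lemma \ref{secdoest}) together with the choice of $(x_k,t_k)$ gives a uniform bound $|\dd u_k|\le C$ for the Euclidean complex Hessian. Moreover the eigenvalues stay in a fixed compact subset of $\cS$: by Lemma \ref{sff} the phase stays in $[\inf_X\Theta(A[\phi_0]),\sup_X\Theta(A[\phi_0])]\Subset((n-2)\hpi,n\hpi)$, and by Corollary \ref{unifev} the bottom eigenvalue is uniformly bounded.

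Since $\hat\Theta\in((n-1)\hpi,n\hpi)$, on this compact eigenvalue range the operator is uniformly elliptic and, by Theorem \ref{tanconv}, concave (equivalently one uses Yuan's convexity of the level sets of $\theta$, Proposition \ref{bps}(2)). Hence the interior Evans--Krylov and Schauder estimates --- the local version of Lemma \ref{evkry} --- yield uniform $C^\infty_{loc}$ bounds for the $u_k$, and a subsequence converges in $C^\infty_{loc}(\C^n)$ to some $u_\infty$ with $u_\infty(0)=0$, $|\nabla u_\infty|\le 1$, $|\nabla u_\infty(0)|=1$ (so $u_\infty$ is not constant) and $|\dd u_\infty|\le C$. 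Tracing the eigenvalues through the rescaling --- using that the negative eigenvalue is uniformly bounded (Corollary \ref{unifev}, Proposition \ref{bps}(5)) and hence scales to $0$, and that $\Theta(A[\phi_t])\le\sup_X\Theta(A[\phi_0])<n\hpi$ --- one finds that $\dd u_\infty\ge 0$ (that is, $u_\infty$ is plurisubharmonic) with at least one vanishing eigenvalue at every point.

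The remaining step, a Liouville-type theorem forcing such a $u_\infty$ to be constant, is the main obstacle, and is precisely where the framework of \cite{PT17,Sze18} does not apply verbatim: there $f$ is assumed to extend to a symmetric convex cone containing $\cS$, whereas $f(\lambda)\to-\infty$ at $\p\cS$. The key observations making it go through are that, in the rescaled limit, the function $f$ has disappeared and only the asymptotic cone $\overline{\Gamma_n}$ of the super-level sets $\{\lambda:\theta(\lambda)\ge\inf_X\Theta(A[\phi_0])\}$ --- a genuine convex cone --- is relevant, and that Key Lemma \ref{key} (whose parabolic validity rests on the convexity of the level set of $g(\lambda,\tau)=f(\lambda)+\tau$, itself a consequence of Theorem \ref{tanconv}) prevents the limiting equation from degenerating along the sequence. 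With these in hand one argues as in the gradient estimate of Collins--Jacob--Yau \cite{CJY15} for the elliptic equation \eqref{dHYMf}: $u_\infty$ solves (after composing $\Theta$ with a fixed concave function, using Yuan \cite{Yua05}) a uniformly elliptic concave equation on all of $\C^n$ with right-hand side bounded away from the degenerate values; the interior Evans--Krylov estimate on dilating balls then forces $\dd u_\infty$ to be constant, so $u_\infty$ is affine, and since $\Theta(\dd u_\infty)$ must lie in $((n-2)\hpi,n\hpi)$ (in particular is positive) this constant Hessian is non-zero, whence $\nabla u_\infty$ is unbounded --- contradicting $|\nabla u_\infty|\le 1$. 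This contradiction proves \eqref{gradbd}.

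I expect the genuine difficulty to be concentrated in this last paragraph --- verifying that the possible degeneration of the blow-up limit does not obstruct the Liouville argument and that Lemma \ref{key} can indeed be fed into the rescaling to keep the limiting equation non-degenerate --- while the remaining ingredients (the $C^0$ bound, the inequality $|\dd\phi|\le C(1+|\nabla\phi|^2)$ of Lemma \ref{secdoest}, the uniform ellipticity and concavity, and the Evans--Krylov--Schauder bootstrap) are routine adaptations of \cite{PT17}.
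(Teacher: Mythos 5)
Your blow-up setup is essentially the paper's: assume the estimate fails, rescale at near-maximum points of $|\nabla\phi|$, use the $C^0$ bound, the eigenvalue lower bound from Corollary \ref{unifev} (so that $\dd\phi_k\geq -C/C_k^2\to 0$), and Lemma \ref{secdoest} (so that $|\dd\phi_k|\leq 2C$), and extract a limit with $|\nabla u_\infty(0)|=1$ and $\dd u_\infty\geq 0$. Up to that point you match the proof. But your final Liouville step is where the argument breaks, and it breaks for a reason you half-identify but then try to argue around rather than avoid. The operator $\theta(\lambda)=\sum_i\arctan\lambda_i$ is not homogeneous, so \emph{no limiting equation survives the rescaling}: the eigenvalues of $\chi$ at the blow-up points are of order $C_k^2$ and only $\lambda/C_k^2$ converges, while $\arctan$ of a divergent eigenvalue just tends to $\hpi$. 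Hence the claim that $u_\infty$ ``solves a uniformly elliptic concave equation with $\Theta(\dd u_\infty)\in((n-2)\hpi,n\hpi)$'' is not meaningful --- the limit satisfies no PDE, and consequently Evans--Krylov on dilating balls, the concavity from Theorem \ref{tanconv}, and Lemma \ref{key} have no role to play here (Lemma \ref{key} is used for the second-order estimate, not the gradient estimate). Two further slips compound this: a constant complex Hessian would make $u_\infty$ quadratic plus pluriharmonic, not affine; and your uniform $C^\infty_{loc}$ bounds for $u_k$ via parabolic Evans--Krylov are not justified, since the rescaled functions are single time-slices carrying the bounded but uncontrolled inhomogeneity $\p_t\phi$.

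The paper's resolution is much softer and uses an ingredient you derived but then dropped: the uniform bound $\sup_{X\times[0,T)}|\phi_t|\leq C$ makes the limit $u_\infty$ a \emph{bounded} plurisubharmonic function on $\C^n$ (psh in the distributional sense, which only needs $C^{1,\b}_{loc}$ convergence from the uniform $C^{1,1}$ bounds --- no regularity theory for the equation is required). A bounded psh function on $\C^n$ is constant (Ronkin), contradicting $|\nabla u_\infty(0)|=1$. This is exactly the argument of \cite[Proposition 5.1]{CJY15}, and it is precisely the reason the author can dispense with Sz\'ekelyhidi's Liouville theorem for $\Gamma$-solutions: once Corollary \ref{unifev} forces the blow-up limit to be psh, only boundedness is needed, and the structural deficiencies of $f$ never enter. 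You should replace your last paragraph with this one-line Liouville argument; as written, the proof has a genuine gap.
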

In \cite[Lemma 4]{PT17}, they give the gradient estimate like Lemma \ref{gradb} by the blowup argument combined with Sz\'ekelyhidi's Liouville theorem for $\Gamma$-solutions (\cf \cite[Section 5]{Sze18}). This argument does not apply to the TLPF due to the lack of the structural properties for $f$ as mentioned in the beginning of this subsection. However, since we have a uniform lower bound for the eigenvalues in our case by Corollary \ref{unifev}, the argument is rather simple. We follow closely to the argument \cite[Proposition 5.1]{CJY15};
\begin{proof}[Proof of Lemma \ref{gradb}]
Assume that \eqref{gradbd} does not hold. Then there exists a sequence $(x_k,t_k) \in X \times [0,T)$ with $t_k \to T$ such that
\[
C_k:=|\nabla \phi(x_k,t_k)|_\a=\sup_{X \times [0,t_k]}|\nabla \phi|_\a \to \infty
\]
as $k \to \infty$. By passing to a subsequence, we may further assume that $\{x_k\}$ converges to some point $x \in X$. From the previous arguments, there is a uniform constant $C>0$ such that
\begin{itemize}
\item $\hat{\chi}+\dd \phi_t \geq -C\a$ on $X \times [0,T)$.
\item $\sup_{X \times [0,T)}|\phi_t| \leq C$.
\item $|\dd \phi(x,t_k)|_\a \leq C(1+\sup_{X \times [0,t_k]}|\nabla \phi|_\a^2)$ for all $x \in X$ and $k$.
\end{itemize}
For each $k$, we take a local coordinates $\{U_k,(z_1,\ldots,z_n)\}$ centered at $x_k$, identifying with the ball $B_1(0)$ of radius $1$, where $\a=\Id+O(|z|^2)$. By replacing $C$ by a slightly large number, we may further assume that $\a$ is the Euclidean metric. We define $\phi_k(z):=\phi(z/C_k,t_k)$ defined on the ball of radius $C_k$. Then we have;
\begin{itemize}
\item $\dd \phi_k \geq (-C\Id-\hat{\chi})/C_k^2$ on $B_{C_k}(0)$.
\item $\sup_{B_{C_k}(0)} |\phi_k| \leq C$.
\item $|\dd \phi_k|_\a \leq 2C$ on $B_{C_k}(0)$.
\item $|\nabla \phi_k|_\a \leq 1=|\nabla \phi_k(0)|_\a$ on $B_{C_k}(0)$.
\end{itemize}
The proof can now be completed exactly as in \cite[Proposition 5.1]{CJY15}. So for a fixed $\b \in (0,1)$, we know that by passing to a subsequence, $\phi_k$ converges to $\phi_\infty \colon \C^n \to \R$ in $C_{\rm loc}^{1,\b}$ as $k \to \infty$. Moreover, the function $\phi_\infty$ is continuous, uniformly bounded, has $|\nabla \phi_\infty(0)|_\a=1$ and satisfies $\dd \phi_{\phi_\infty} \geq 0$ in the sense of distributions. Such a function must be a constant (\cf \cite{Ron74}), which contradicts to $|\nabla \phi_\infty(0)|_\a=1$.
\end{proof}
\subsection{Convergence of the flow}
Now we will finish the proof of Theorem \ref{convf} by showing that;
\begin{thm} \label{convpt}
Let $X$ be a compact complex manifold with a K\"ahler form $\a$, and $\hat{\chi}$ a closed real $(1,1)$-form. Assume that $\hat{\Theta} \in ((n-1)\hpi,n \hpi)$ and there is a $C$-subsolution. Then the tangent Lagrangian phase flow $\phi_t$ starting from any potential $\phi_0 \in \cH$ converges to the deformed Hermitian Yang-Mills metric $\phi_\infty \in \cH$ in the $C^\infty$-topology.
\end{thm}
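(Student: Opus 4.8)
The plan is to follow the general strategy for fully non-linear parabolic equations with concave right-hand side from \cite{PT17} (\cf also \cite{Sze18}), checking that the non-standard features of $f$ do not obstruct it. Combining Lemma \ref{gradb} with Lemma \ref{secdoest} gives a uniform bound $|\dd \phi_t|_\a \leq C$ on $X\times[0,\infty)$; since $\hat{\Theta}\in((n-1)\hpi,n\hpi)$, the eigenvalues of $A[\phi_t]$ then stay in a fixed compact interval along the flow (by this $C^2$-bound together with Corollary \ref{unifev}), so $F$ is uniformly elliptic and, by Theorem \ref{tanconv}, concave, whence Evans--Krylov (Lemma \ref{evkry}) yields a uniform $C^{2,\b}$-bound. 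The parabolic Schauder estimates and a bootstrap then give uniform $C^k$-bounds for every $k$; in particular $u_t:=\ddt\phi_t=F(A[\phi_t])$ is bounded in each $C^k$ uniformly in $t$, and by Lemma \ref{sff} the phase $\Theta(A[\phi_t])-\hat{\Theta}$ remains in a fixed compact subinterval $[-\hpi+\e_0,\hpi-\e_0]$ of $(-\hpi,\hpi)$. We may assume $\phi_0$ is not already a dHYM metric, for otherwise there is nothing to prove.

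Next we will show that $u_t\to 0$ in $C^\infty$. Differentiating the flow as in Lemma \ref{sff}, $u_t$ solves the linear, uniformly parabolic equation $\ddt u_t=F^{k\bar{k}}\nabla_k\nabla_{\bar{k}}u_t$, so by the maximum principle $\sup_X u_t$ is non-increasing and $\inf_X u_t$ is non-decreasing. Since $\chi_{\phi_t}$ stays in a fixed cohomology class we have $\int_X v_{\phi_t}\sin(\Theta(A[\phi_t])-\hat{\Theta})\,\a^n=0$ for all $t$, and as $u_t$ and $\sin(\Theta(A[\phi_t])-\hat{\Theta})$ have the same sign pointwise, this forces $\inf_X u_t\leq 0\leq\sup_X u_t$. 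By Proposition \ref{eff}, $\cJ(\phi_t)$ is non-increasing, and it is bounded below by the uniform estimates; integrating
\[
-\ddt\cJ(\phi_t)=\int_X u_t^2\,\Re\big(e^{-\sqrt{-1}\hat{\Theta}}(\a+\sqrt{-1}\chi_{\phi_t})^n\big)\geq c_0\int_X u_t^2\,\a^n
\]
over $[0,\infty)$ gives $\int_0^\infty\int_X u_t^2\,\a^n\,dt<\infty$, so $\int_X u_{t_k}^2\,\a^n\to 0$ along some $t_k\to\infty$. Because the $u_t$ are uniformly Lipschitz this forces $\sup_X|u_{t_k}|\to 0$, and then the monotonicity of $\sup_X u_t$ and $\inf_X u_t$ together with $\inf_X u_t\leq 0\leq\sup_X u_t$ upgrades this to $\sup_X|u_t|\to 0$ as $t\to\infty$. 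Interpolating against the uniform $C^k$-bounds then gives $\|u_t\|_{C^k}\to 0$ for all $k$.

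It then remains to deduce convergence of $\phi_t$ itself. By the uniform $C^\infty$-bounds and Ascoli--Arzel\`a, every sequence $t_k\to\infty$ has a subsequence along which $\phi_{t_k}\to\phi_\infty$ in $C^\infty$; since $u_t\to 0$ one gets $F(A[\phi_\infty])=0$, hence $\Theta(A[\phi_\infty])=\hat{\Theta}$ (as $\Theta(A[\phi_\infty])-\hat{\Theta}\in[-\hpi+\e_0,\hpi-\e_0]$), so $\phi_\infty\in\cH$ is a dHYM metric, and $\cC(\phi_\infty)=\cC(\phi_0)$ since $\cC$ is constant along the flow (Proposition \ref{eff}) and continuous. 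As dHYM metrics in a fixed class are unique modulo constants (the difference of two solutions solves a linear elliptic equation without zeroth order term, by the mean value theorem and positivity of the derivatives of $\Theta$, hence is constant by the strong maximum principle) and $\cC(\psi+c)=\cC(\psi)+c|Z|$ with $|Z|\neq 0$ fixes the constant, the limit $\phi_\infty$ is independent of the subsequence; therefore $\phi_t\to\phi_\infty$ in $C^\infty$. Alternatively, uniform ellipticity of $F^{k\bar{k}}$ and the parabolic Krylov--Safonov Harnack inequality yield $\osc_X u_t\leq Ce^{-\d t}$, hence $\|u_t\|_{C^0}\leq\osc_X u_t\leq Ce^{-\d t}$ by the sign bound, after which interpolation makes $\phi_t$ Cauchy in every $C^k$.

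The main difficulty will be exactly this last passage from a priori bounds to genuine convergence: the estimates alone yield only subsequential limits, and closing the argument needs either uniqueness of the dHYM metric --- combined with the conservation of $\cC$, a feature special to the TLPF, \cf Remark \ref{cff} --- or a decay rate for $\ddt\phi_t$. Among the estimates feeding into the proof, the one genuinely unlike \cite{PT17,Sze18} is the gradient bound (Lemma \ref{gradb}): since $f$ does not extend to a symmetric cone, the Liouville theorem for $\Gamma$-solutions is not available, and the bound is obtained instead by a direct blow-up argument using the uniform lower bound on the eigenvalues from Corollary \ref{unifev}.
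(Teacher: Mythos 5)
Your argument is correct, and up to the final step it sets up the same a priori estimates as the paper; but your \emph{primary} mechanism for deducing convergence is genuinely different from the one the paper uses. The paper applies the differential Harnack inequality for linear uniformly parabolic equations on compact Hermitian manifolds (\cite{Gil11}, following Krylov--Safonov) to $\psi_t=\ddt\phi_t+A>0$, obtaining the exponential decay $\osc_X \ddt\phi_t\leq C_1e^{-C_2t}$; combined with the existence of a zero of $\ddt\phi_t$ at each time (forced by $\int_X \ddt\phi_t\,\Re\big(e^{-\sqrt{-1}\hat{\Theta}}(\a+\sqrt{-1}\chi_{\phi_t})^n\big)=0$, i.e.\ conservation of $\cC$), this gives $\|\ddt\phi_t\|_{C^0}\leq C_1e^{-C_2t}$, so $\phi_t+\frac{C_1}{C_2}e^{-C_2t}$ is monotone and bounded and converges outright --- no uniqueness of the limit is needed, and one gets an exponential rate for free. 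Your main route instead dissipates the Kempf--Ness energy $\cJ$ to produce a subsequence with $\|u_{t_k}\|_{C^0}\to 0$, upgrades this to $\|u_t\|_{C^0}\to 0$ via the maximum-principle monotonicity of $\sup_X u_t$ and $\inf_X u_t$ together with the sign constraint $\inf_X u_t\leq 0\leq \sup_X u_t$, and then pins down the subsequential limits using uniqueness of dHYM metrics modulo constants plus conservation of $\cC$. This is essentially the alternative proof the paper itself flags in the remark following the theorem (there phrased via the monotonicity of $\cC$ and $\cV$ and \cite[Theorem 1.1]{JY17}, as in \cite{Tak19}); it is more elementary in that it avoids the parabolic Harnack inequality, at the cost of yielding no convergence rate and requiring the uniqueness input. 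Your closing paragraph correctly identifies the Harnack route as the alternative, and all the individual steps (the uniform positivity of $\Re\big(e^{-\sqrt{-1}\hat{\Theta}}(\a+\sqrt{-1}\chi_{\phi_t})^n\big)\geq c_0\a^n$ from Lemma \ref{sff}, the Lipschitz upgrade from $L^2$ to $C^0$ decay, and the mean-value-theorem uniqueness argument) check out.
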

We can show this by using the argument in \cite[Lemma 7]{PT17}.
\begin{proof}[Proof of Theorem \ref{convpt}]
From the previous subsection, Lemma \ref{evkry} and a standard bootstrapping argument we obtain a uniform $C^k$ control along the flow $\phi_t$ for any non-negative integer $k$. To prove the $C^\infty$-convergence, we set $\psi_t:=\ddt \phi_t+A$ for some uniform constant $A>0$ such that $\psi_t>0$ for all $t \in [0,\infty)$ by using Lemma \ref{sff}. Then $\psi$ satisfies the same heat equation as $\ddt \phi$;
\begin{equation} \label{heat}
\ddt \psi=F^{i \bar{j}} \p_i \p_{\bar{j}} \psi.
\end{equation}
                                                                                                                                                                                                                                                                                                                                                                                                                                                                                                                                                                                                                                                                                                                                                                                                                                                                                                                                                                                                                                                                                                                                                                                                                                                                                                                                                                                                                                                                                                                                                                                                                                                                                          Since we already know that the RHS of \eqref{heat} is uniformly elliptic by the $C^2$-estimate, we can apply the differential Harnack inequality on compact Hermitian manifolds to \eqref{heat}, and obtain
\[
\osc_X \ddt \phi (\cdot,t)=\osc_X \psi (\cdot,t) \leq C_1 e^{-C_2t}
\]
for some constants $C_1, C_2>0$ (see \cite[Section 6, Section 7]{Gil11} for more details). On the other hand, in the proof of Proposition \ref{eff} we observe that
\[
\int_X \ddt \phi \cdot \Re \big( e^{-\sqrt{-1}\hat{\Theta}} (\a+\sqrt{-1}\chi_\phi)^n \big)=0,
\]
which in particular shows that there exists a point $y=y(t) \in X$ such that $\ddt \phi(y,t)=0$ since the measure $\Re \big( e^{-\sqrt{-1}\hat{\Theta}} (\a+\sqrt{-1}\chi_\phi)^n \big)$ is positive along the flow. Therefore for any $x \in X$ we have
\[
\bigg|\ddt \phi(x,t) \bigg|=\bigg| \ddt \phi (x,t)-\ddt \phi(y,t) \bigg| \leq \osc_X \ddt \phi (\cdot,t) \leq C_1 e^{-C_2 t},
\]
and hence
\[
\ddt \bigg( \phi_t+\frac{C_1}{C_2} e^{-C_2 t} \bigg)=\ddt \phi-C_1 e^{-C_2 t} \leq 0.
\]
So the function $\phi_t+\frac{C_1}{C_2} e^{-C_2 t}$ is decreasing in $t$, and uniformly bounded by the $C^0$ estimate. Thus it converges to a function $\phi_\infty$. By the higher order estimates, we know that this convergence is actually in $C^\infty$. The function $\phi_t$ also converges to the same function $\phi_\infty$ in $C^\infty$. The convergence $\ddt \phi \to 0$ yields that the function $\phi_\infty$ must satisfies the equation $F(A[\phi_\infty])=0$, so we have $\Theta(A[\phi_\infty])=\hat{\Theta}$. This completes the proof.                                                                                                                                                                                                                                                                                                                                                                                                                                                                                                                                                                                                                                                                                                                                                                                                                                                                                                                                                                                                                                                                                                                                                                                                                                                                                                                                                                                                                                                                                                                                                                                                                                                                                                                                                                                                                                                                                                                                                                                                                                                                                                                                                                                                                                                                                                                                                                                                                                                                                                                                                                                                                                                                                                                                                                                                                                                                                                                                                                                                                                                                                                                                                                                                                                  
\end{proof}
\begin{rk}
Using the monotonicity of $\cC$ and $\cV$ (\cf Proposition \ref{eff}) together with the uniqueness result of dHYM metrics \cite[Theorem 1.1]{JY17}, one can easily obtain an alternative proof of the $C^\infty$-convergence of the TLPF in the same way as in the proof of \cite[Theorem 1.1]{Tak19}.
\end{rk}
\newpage

\end{document}